\documentclass{amsart}
\usepackage{amsmath,amssymb,amsthm}

\usepackage{tikz-cd}

\usepackage{enumitem}

 \newcommand{\br}{\mathbf{r}}               
\newcommand{\bb}{\mathbf{b}}               

\newcommand{\Kfam}{\mathcal{K}}            
\newcommand{\Yeq}[1][r,s,\bd]{\mathcal{Y}^{=}_{#1}}
\newcommand{\F}{\mathbb{F}}                
\newcommand{\Z}{\mathbb{Z}}
\newcommand{\Q}{\mathbb{Q}}
\newcommand{\R}{\mathbb{R}}
\newcommand{\C}{\mathbb{C}}
\newcommand{\A}{\mathbb{A}}
\newcommand{\bP}{\mathbb{P}}
\newcommand{\WP}{\mathbb{WP}}
\newcommand{\Gm}{\mathbb{G}_m}
\newcommand{\Qbar}{\overline{\Q}}          
\newcommand{\muu}[1]{\mu_{#1}}             

\DeclareMathOperator{\id}{id}  
\DeclareMathOperator{\Jac}{Jac}
\DeclareMathOperator{\wt}{wt}              
\DeclareMathOperator{\diag}{diag}          
\DeclareMathOperator{\sgn}{sgn}            
\DeclareMathOperator{\rank}{rank}          
\DeclareMathOperator{\trdeg}{trdeg}        
\DeclareMathOperator{\Aut}{Aut}            
\DeclareMathOperator{\Gal}{Gal}            
\DeclareMathOperator{\Span}{span}          
\DeclareMathOperator{\GL}{GL}
\DeclareMathOperator{\Spec}{Spec}  

\newcommand{\detJ}[1]{\det J#1}            
\newcommand{\Jm}[1]{J#1}                   
\newcommand{\Jacuv}{\Jac_{u,v}}            
\newcommand{\jb}[2]{\{#1,#2\}}             

\newcommand{\bomega}{\boldsymbol{\omega}}

\newcommand{\Fbar}{\overline{F}}           
\newcommand{\Gbar}{\overline{G}}
\newcommand{\sigt}{\sigma_t}
\newcommand{\taut}{\tau_t}
\newcommand{\Omsig}{\Omega^{\sigma}}       
\newcommand{\Ktau}{K^{\tau}}

\newcommand{\bd}{\mathbf{d}}
\newcommand{\bw}{\mathbf{w}}
\newcommand{\bq}{\mathbf{q}}
\newcommand{\hwt}{\mathfrak{h}_{\bw}}      

\newcommand{\VA}{V_A}
\newcommand{\VB}{V_B}
\newcommand{\VL}{V_{\Lambda}}
\newcommand{\VAd}{V_A(\bd)}
\newcommand{\VBd}{V_B(\bd)}
\newcommand{\VLd}{V_{\Lambda}(\bd)}
\newcommand{\TBL}{T_{B,\Lambda}}
\newcommand{\Ypar}[1][r,s,\bd]{\mathcal{Y}_{#1}}
\newcommand{\Yparo}[1][r,s,\bd]{\mathcal{Y}^{\circ}_{#1}}
\newcommand{\Wpar}[1][r,s,\bd]{\mathcal{W}_{#1}}
\newcommand{\Gaut}{\mathcal{G}}            
\newcommand{\Ncount}{\mathcal{N}}          
\newcommand{\Ost}{\mathcal{O}}             

\usepackage[colorlinks=true,
            linkcolor=blue,
            citecolor=red,
            urlcolor=magenta]{hyperref}

\usepackage[capitalise,nameinlink]{cleveref}

\newtheorem{theorem}{Theorem}[section]
\newtheorem{lem}[theorem]{Lemma}
\newtheorem{prop}[theorem]{Proposition}
\newtheorem{cor}[theorem]{Corollary}
\newtheorem{exa}[theorem]{Example}

\theoremstyle{remark}
\newtheorem{rem}[theorem]{Remark}

\crefname{theorem}{Thm}{Theorems}
\crefname{lem}{Lem}{Lemmas}
\crefname{prop}{Prop}{Propositions}
\crefname{cor}{Cor}{Corollaries}
\crefname{rem}{Rem}{Remarks}
\crefname{section}{Sec.}{Secs.}
\crefname{equation}{Eq.}{Eqs.}

\title{Graded Keller maps and the Jacobian Conjecture}
\author{T. Shaska}
\date{\today}

\begin{document}

\begin{abstract}
We study Keller maps $\C^n\to\C^n$ that are equivariant for a linear action of $\Gm$, that is homogeneous for an integer grading of the coordinates, and ask what the weight vector alone decides. The sign pattern decides a great deal. If the weights are all of one sign, the setting of weighted projective spaces, a graded Keller map is always an automorphism; in dimension two the same holds for every sign pattern. A graded counterexample to the Jacobian Conjecture must therefore have weights of mixed sign and dimension at least three, and the map recently announced as a counterexample, equivariant for $\wt(x,y,z)=(1,-1,-2)$, realizes the smallest such case. When exactly one weight is positive and equals $1$, the Keller condition descends to the invariant quotient, where it says that the Jacobian of the quotient map vanishes along the contracted locus to the order prescribed by the weights. The descended condition is a single equation, multilinear in its arguments, so the graded counterexamples of bounded degree are exactly the rational points of an explicit multiprojective scheme over $\Z$, and the announced map is one of them. Modulo the natural $\Gm$ the parameter space is a weighted projective space with positive weights: a hyperbolic counterexample is parametrised by an elliptic moduli space, of the type in which rational points are governed by a Kummer condition at every prime \cite{sparsity}. We classify graded Keller maps by the signature of the weights, completely in the elliptic and parabolic cases, and reduce the hyperbolic case to that scheme. Over $\Q$ the image of the announced map is thin, but along the line where the stabilizer is $\muu{2}$ the Kummer condition of the positive-weight theory reappears.
\end{abstract}

\maketitle

\setcounter{tocdepth}{1}
\tableofcontents


\section{Introduction}
\label{sec-1}

A Keller map is a polynomial map $\C^n\to\C^n$ whose Jacobian determinant is a nonzero constant, and the Jacobian Conjecture asserts that every such map is a polynomial automorphism. We study Keller maps that are equivariant for a linear action of $\Gm$, that is homogeneous for a grading $\wt(x_1,\dots,x_n)=(w_1,\dots,w_n)$ with $w_i\in\Z$, and we ask what the weight vector alone decides. The answer is that it decides a great deal, and that the dividing line is the sign pattern. If the weights are all of one sign, the setting of weighted projective spaces, a graded Keller map is always an automorphism (\cref{thm:rigid}); in dimension two the same holds for every sign pattern (\cref{thm:plane}). A graded counterexample must therefore have weights of mixed sign and dimension at least three. In that remaining case the Keller condition descends to the invariant quotient, where it becomes a single equation, multilinear in its arguments (\cref{thm:master}, \cref{thm:nmaster}), and the counterexamples of bounded degree become the rational points of an explicit scheme over $\Z$ (\cref{thm:points}).

Our interest in this question comes from two directions. The first is a recent one. In July 2026, L.\ Alp\"oge announced a counterexample to the Jacobian Conjecture in dimension three, credited to the AI system Fable~\cite{alpoge}. Define
\begin{equation}\label{eq:map}
\begin{split}
F  \colon \C^3 	&	\to\C^3   \\
(x, y, z) & \to (a,b,c)
\end{split}
\end{equation}
where $a$, $b$, $c$ are given by the polynomials
\begin{equation}
\begin{aligned}
a&=(1+xy)^3z+y^2(1+xy)(4+3xy),\\
b&=y+3x(1+xy)^2z+3xy^2(4+3xy),\\
c&=2x-3x^2y-x^3z.
\end{aligned}
\end{equation}
Denote by  $\detJ {F}$   the determinant of the Jacobian matrix of $F$.  We have $\detJ {F}\equiv-2$, while $F$ is generically three-to-one. The components of $F$ are homogeneous for the grading $\wt(x,y,z)=(1,-1,-2)$, which is hyperbolic and, by the above, the smallest sign pattern and the smallest dimension in which a graded counterexample can occur. The map is the running example of this paper, and in \cref{sec-8} it becomes a $\Q$-point of the parameter scheme just mentioned. Everything we prove about the class is independent of it; the announcement is what makes the class interesting today, not what the results rest on.

That the grading exists was observed independently and earlier in \cite{dorky,sbs}, in the opposite sign convention $\wt(x,y,z)=(-1,1,2)$, as were versions of some results of \cref{sec-4} and \cref{sec:disc}; we learned of this after the present analysis was under way, and give independent proofs of everything we use.

The second direction is the one we came from. Graded vector spaces, and the weighted projective spaces attached to them, have been the setting of a series of studies: rational points and heights \cite{sparsity,bgsh}, codes \cite{gqc}, and learning architectures whose layers are graded and whose morphisms are weight-homogeneous \cite{gvs,gnn,gt}. In each the grading is the structure, and the results depend on the weights. The present paper asks the same question for polynomial maps: which properties of a graded polynomial map are already decided by its weight vector? \Cref{rem:graded-nn} records what \cref{thm:rigid} says in the architecture setting.

\section{Graded  maps}
\label{sec-2}

Let $\Gm$ be the multiplicative group, that is the affine algebraic group $\Gm=\A^1\setminus\{0\}$ with coordinate ring $\Z[t,t^{-1}]$, multiplication as the group operation, and $1$ as the identity. Its points are $\Gm(\C)=\C^{\times}=\C\setminus\{0\}$, and more generally $\Gm(k)=k^{\times}$ for any field $k$.

Let $\bw=(w_1,\dots,w_n)\in\Z^n$ be a \textbf{weight vector}. We grade the polynomial ring $\C[x_1,\dots,x_n]$ by giving $x_i$ the weight $\wt(x_i)=w_i$, so that a monomial has weight
\[
  \wt\bigl(x_1^{m_1}\cdots x_n^{m_n}\bigr)=m_1w_1+\dots+m_nw_n .
\]
A polynomial is called \textbf{weight-homogeneous} of weight $\ell$ if all its monomials have weight $\ell$; the zero polynomial is weight-homogeneous of every weight. 
 We write $\C[x]_{\bw}$ for the polynomial ring with this grading, $\C[x]_{\bw,\ell}$ for its piece of weight $\ell$, and $\A^n_{\bw}$ for the \textbf{graded affine space}, that is the affine space $\A^n$ together with the grading of its coordinate ring. It is $\A^n$ equipped with extra structure, not a new variety.
 
 All varieties are over $\C$ unless stated otherwise, and we write $\A^n=\Spec\C[x_1,\dots,x_n]$, so that $\A^n(\C)=\C^n$; the group acting is $\Gm$ base changed to $\C$. We use $\A^n$ when the grading is in play and $\C^n$ when the argument is about points, as in the topological step of \cref{thm:rigid}. The schemes of \cref{sec-8} are the exception: they are defined over $\Z$.

To any grading $\bw$ we associate the diagonal action of $\Gm$ on $\A^n$ given by
\[
  \sigt(x_1,\dots,x_n)=\bigl(t^{w_1}x_1,\dots,t^{w_n}x_n\bigr), \qquad t\in\Gm .
\]
Since every algebraic character of $\Gm$ is $t\mapsto t^{\ell}$ for a unique $\ell\in\Z$, the assignment $\bw\mapsto\sigma$ is a bijection from $\Z^n$ onto the set of algebraic $\Gm$-actions on $\A^n$ which are diagonal in the coordinates $x_1,\dots,x_n$, the inverse assignment recovering $w_i$ as the weight of $x_i$ under $\sigma$. Moreover, for $f\in\C[x]$ one has $f\in\C[x]_{\bw,\ell}$ if and only if $f\circ\sigt=t^{\ell}f$ for every $t\in\Gm$; in particular the weight-zero piece is the ring of invariants,
\[
  \C[x]_{\bw,0}=\C[x]^{\sigma}.
\]
We call $\bw$ \textbf{elliptic} if the $w_i$ are all nonzero and all of the same sign, \textbf{parabolic} if some $w_i$ vanishes, and \textbf{hyperbolic} if the $w_i$ are all nonzero and both signs occur. Every weight vector is of exactly one of these three types, and this trichotomy is the classification proposed in \cref{sec:9}.

Now let $\bq=(q_1,\dots,q_m)\in\Z^m$ be a second weight vector, let $y_1,\dots,y_m$ be the coordinates on the target, and write
\[
\taut(y_1,\dots,y_m)=\bigl(t^{q_1}y_1,\dots,t^{q_m}y_m\bigr)
\]
for the action associated to $\bq$. A polynomial map
\begin{equation}
\label{def:G}
\begin{split}
G\colon\A^n_{\bw}	& \longrightarrow  \A^m_{\bq} \\
(x_1, \ldots , x_n ) 	& \longmapsto \bigl(G_1(x), \ldots , G_m(x)\bigr)
\end{split}
\end{equation}
is called \textbf{graded} if each component $G_j$ is weight-homogeneous of weight $q_j$. We call $\bw$ the \textbf{source weights} and $\bq$ the \textbf{target weights}.

Equivalently, $G$ is graded if and only if it is \textbf{equivariant}, that is,
\[
G\circ\sigt=\taut\circ G \qquad (t\in\Gm),
\]
or, written out,
\[
G\bigl(t^{w_1}x_1,\dots,t^{w_n}x_n\bigr)=\bigl(t^{q_1}G_1(x),\dots,t^{q_m}G_m(x)\bigr)\qquad(t\in\Gm),
\]
which says that the diagram
\[
\begin{tikzcd}
\A^n_{\bw} \arrow[r, "\sigt"] \arrow[d, "G"'] & \A^n_{\bw} \arrow[d, "G"] \\
\A^m_{\bq} \arrow[r, "\taut"'] & \A^m_{\bq}
\end{tikzcd}
\]
commutes for every $t\in\Gm$. Write
\[
  G^{*}\colon\C[y_1,\dots,y_m]\longrightarrow\C[x_1,\dots,x_n], \qquad G^{*}f=f\circ G,
\]
for the comorphism of $G$, the $\C$-algebra homomorphism given by composition with $G$; thus $G^{*}y_j=G_j$. Since weights add under multiplication of polynomials, $G$ is graded if and only if $G^{*}$ preserves weights, that is,
\[
  G^{*}\bigl(\C[y]_{\bq,\ell}\bigr)\subseteq\C[x]_{\bw,\ell} \qquad\text{for every } \ell\in\Z ,
\]
so that $G^{*}$ is a homomorphism of graded rings of degree $0$.

Graded affine spaces and graded maps form a category: it is the full subcategory of $\Gm$-varieties whose objects are affine spaces with a diagonal linear action, and under $G\mapsto G^{*}$ it is anti-equivalent to the corresponding category of graded polynomial algebras and degree-zero homomorphisms. In particular $\id$ is graded, and $G\circ H$ is graded whenever $G$ and $H$ are, since $(G\circ H)^{*}=H^{*}\circ G^{*}$. We write $\Aut_{\bw}(\A^n)$ for the group of graded automorphisms of $\A^n_{\bw}$, that is the polynomial automorphisms of $\A^n$ commuting with $\sigma$; it is the group from which the liftable automorphisms of \cref{sec-8} are obtained.

 Two graded affine spaces $\A^n_{\bw}$ and $\A^n_{\bq}$ are carried to one another by a permutation of coordinates exactly when $\bq$ is a permutation of $\bw$.

Let $g=\gcd(w_1,\dots,w_n,q_1,\dots,q_m)$, and assume that $\bw$ and $\bq$ are not both zero, so that $g\geq1$. The subgroup of $\Gm$ acting trivially on both $\A^n_{\bw}$ and $\A^m_{\bq}$ is 
\[
\ker\sigma\cap\ker\tau=\muu{g}.
\]
 Both actions therefore factor through $\Gm/\muu{g}$, and the isomorphism $\Gm/\muu{g}\cong\Gm$ induced by $t\mapsto t^{g}$ carries them to the actions associated to $\bw/g$ and $\bq/g$. Since $g$ divides every $w_i$ and every $q_j$, the pieces of weight not divisible by $g$ are zero, while
\[
  \C[x]_{\bw,g\ell}=\C[x]_{\bw/g,\ell} \qquad\text{and}\qquad \C[y]_{\bq,g\ell}=\C[y]_{\bq/g,\ell}
\]
for every $\ell\in\Z$. The two pairs of weight vectors thus give the same decompositions, up to reindexing, and define the same graded maps.

For the remainder of this section we take $m=n$, so that the Jacobian matrix $\Jm{G}$ is square. By $\detJ {G}$ we denote its determinant.

\begin{lem}
\label{lem:grading}
Let $G$ be as in \cref{def:G}. The following hold:

i)  $\detJ{G}$ is weight-homogeneous of weight
\[
\wt ( \detJ {G}) = \sum q_j-\sum w_i.
\]
Moreover, if $\detJ{G}$ is a nonzero constant, then $\sum q_j=\sum w_i$.

ii)  If $q_j\neq0$ then $G_j(0)=0$; in particular $G(0)=0$ as soon as all $q_j$ are nonzero.

iii)  For all $i$ and $j$,
\[
\frac {\partial G_j} {\partial x_i} (0) =0
\]
 unless $q_j=w_i$.

\end{lem}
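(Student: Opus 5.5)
The plan is to reduce all three parts to the elementary fact that differentiation in $x_i$ lowers weight by exactly $w_i$. For a monomial $x^m$ of weight $\ell$, the derivative $\partial x^m/\partial x_i$ is either zero or a scalar multiple of a monomial of weight $\ell-w_i$. Hence $\partial G_j/\partial x_i$ is weight-homogeneous of weight $q_j-w_i$, using the convention that the zero polynomial has every weight. Equivalently, one can differentiate the equivariance identity $G_j(\sigt x)=t^{q_j}G_j(x)$ in $x_i$ and obtain $t^{w_i}(\partial_iG_j)(\sigt x)=t^{q_j}\partial_iG_j(x)$.

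For i), expand $\detJ{G}=\sum_{\pi}\sgn(\pi)\prod_{j}\partial G_j/\partial x_{\pi(j)}$ over permutations $\pi$ of $\{1,\dots,n\}$. Weights add under multiplication, so each summand is weight-homogeneous of weight $\sum_j q_j-\sum_j w_{\pi(j)}=\sum q_j-\sum w_i$. This weight does not depend on $\pi$, so the sum is weight-homogeneous of that weight. For the second assertion, a nonzero constant is a monomial of weight $0$. A nonzero weight-homogeneous polynomial has a single weight, so $\sum q_j-\sum w_i=0$.

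For ii), $G_j(0)$ is the constant term of $G_j$. The constant monomial has weight $0$, so if $q_j\neq0$ that coefficient must vanish. Alternatively, evaluating $G_j(\sigt 0)=t^{q_j}G_j(0)$ gives $G_j(0)=t^{q_j}G_j(0)$ for all $t$, which forces $G_j(0)=0$. For iii), apply ii) to the polynomial $\partial G_j/\partial x_i$, which has weight $q_j-w_i$. Its value at $0$ vanishes unless $q_j-w_i=0$.

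No step is a real obstacle. The only point needing care is the zero-polynomial convention, so that the phrase "weight-homogeneous of weight $q_j-w_i$" remains valid when a partial derivative vanishes identically. I would state this convention at the outset.
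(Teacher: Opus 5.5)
Your proof is correct and is essentially the paper's: parts ii) and iii) match it exactly, since both come down to the vanishing of the value at the $\sigma$-fixed point $0$ of a weight-homogeneous polynomial of nonzero weight, applied to $G_j$ and to $\partial G_j/\partial x_i$, which has weight $q_j-w_i$. The only variation is in i), where the paper differentiates the equivariance into $\Jm{G}(\sigt p)\,\diag(t^{w_1},\dots,t^{w_n})=\diag(t^{q_1},\dots,t^{q_n})\,\Jm{G}(p)$ and takes determinants, while you expand $\detJ{G}$ by the Leibniz formula and add weights termwise, which amounts to the same thing.
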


\begin{proof}
Differentiating the equivariance in $x$ gives
\[
\Jm{G}(\sigt p)\,\diag(t^{w_1},\dots,t^{w_n})=\diag(t^{q_1},\dots,t^{q_n})\,\Jm{G}(p),
\]
and hence
\[
\detJ{G}(\sigt p)=t^{\sum q_j-\sum w_i}\detJ{G}(p),
\]
which proves i), the second assertion because a nonzero constant has weight $0$.

Setting $x=0$ in the equivariance gives
\[
t^{q_j}G_j(0)=G_j(0)
\]
 for all $t$; since $q_j\neq0$, this forces $G_j(0)=0$, which is ii). Finally, the entry $\partial G_j/\partial x_i$ is weight-homogeneous of weight $q_j-w_i$, and $0$ is a fixed point of $\sigma$, so evaluating the corresponding scaling identity there gives
\[
\partial G_j/\partial x_i(0)=t^{\,q_j-w_i}\,\partial G_j/\partial x_i(0)
\]
for all $t$, which forces the entry to vanish unless $q_j=w_i$. This is iii), and no hypothesis on the weights is used.
\end{proof}

\begin{lem}
\label{lem:perm}
Let $G$ be as in \cref{def:G}, with $\detJ{G}(0)\neq0$. Then there is a permutation $\pi$ of $\{1,\dots,n\}$ with $q_j=w_{\pi(j)}$ for all $j$; if the $w_i$ are pairwise distinct, $\pi$ is unique and $\Jm{G}(0)$ is supported on the graph of $\pi$.
\end{lem}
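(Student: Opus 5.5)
By \cref{lem:grading}~iii), the matrix $A=\Jm{G}(0)$ has entry $a_{ji}=\partial G_j/\partial x_i(0)$ equal to zero unless $q_j=w_i$. The plan is to expand the determinant by the Leibniz formula,
\[
\det A=\sum_{\pi\in S_n}\sgn(\pi)\prod_{j=1}^n a_{j\,\pi(j)} ,
\]
and use the hypothesis $\detJ{G}(0)\neq0$, which says that at least one term of this sum is nonzero. Fix such a $\pi$. Every factor $a_{j\pi(j)}$ is then nonzero, so \cref{lem:grading}~iii) forces $q_j=w_{\pi(j)}$ for every $j$. This gives the permutation in the first assertion.

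For the second assertion, I would assume the $w_i$ are pairwise distinct. Then for each $j$ there is at most one index $i$ with $w_i=q_j$. Any permutation $\pi'$ with $q_j=w_{\pi'(j)}$ for all $j$ must therefore have $\pi'(j)$ equal to that unique index, so $\pi'=\pi$, and $\pi$ is unique.

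It remains to see where $A$ is supported. Suppose $a_{ji}\neq0$. Then \cref{lem:grading}~iii) gives $q_j=w_i$. We also have $q_j=w_{\pi(j)}$, so $w_i=w_{\pi(j)}$, and distinctness gives $i=\pi(j)$. Hence every nonzero entry of $A$ lies on the graph $\{(j,\pi(j))\}$, so $A$ is a permutation matrix times an invertible diagonal matrix.

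I expect no real obstacle here. The only point needing care is to extract a permutation from the nonvanishing of the determinant, and the Leibniz expansion does exactly that; no hypothesis on the signs of the weights is used.
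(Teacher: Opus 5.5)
Your proposal is correct and follows the paper's proof essentially verbatim: the Leibniz expansion of $\detJ{G}(0)$ combined with \cref{lem:grading}(iii) yields a permutation along which every factor is nonzero, and distinctness of the $w_i$ gives uniqueness of $\pi$ and the support statement. You also spell out the support claim slightly more explicitly than the paper does, by deriving $i=\pi(j)$ from $w_i=q_j=w_{\pi(j)}$, and you note that the entries on the graph are nonzero.
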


\begin{proof}
Expand
\[
\detJ{G}(0)=\sum_{\pi}\sgn(\pi)\prod_j   \frac {\partial G_j} {\partial x_{\pi(j)}} (0).
\]
By part  iii) of  \cref{lem:grading} each factor vanishes unless $q_j=w_{\pi(j)}$, so a nonzero determinant requires at least one permutation along which all the equalities hold. If the $w_i$ are distinct, the condition $q_j=w_{\pi(j)}$ determines $\pi(j)$, and every other entry of $\Jm{G}(0)$ vanishes.
\end{proof}


A polynomial map $G\colon\C^n\to\C^n$ is called a \textbf{Keller map} if $\detJ{G}$ is a nonzero constant, and a \textbf{polynomial automorphism} if it is bijective with polynomial inverse. The Jacobian Conjecture asserts that every Keller map is a polynomial automorphism. We call $G$ a \textbf{graded Keller map} if in addition it is graded for some pair of weight vectors $\bw,\bq$.

\begin{cor}\label{cor:keller-weights}
Let $G$ be a graded Keller map with source weights $\bw$ and target weights $\bq$. Then $\bq$ is a permutation of $\bw$. Consequently:

\begin{enumerate}[label=\roman*)]
\item $\sum q_j=\sum w_i$;
\item a permutation of the coordinates is a graded isomorphism $\A^n_{\bw}\to\A^n_{\bq}$;
\item $\bw$ and $\bq$ are of the same type, elliptic, parabolic or hyperbolic;
\item $\gcd(q_1,\dots,q_n)=\gcd(w_1,\dots,w_n)$, so the normalisation $g=1$ says exactly that $\gcd(w_1,\dots,w_n)=1$.
\end{enumerate}
\end{cor}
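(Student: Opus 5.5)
The plan is to reduce everything to \cref{lem:perm}. A Keller map has $\detJ{G}$ a nonzero constant, so in particular $\detJ{G}(0)\neq0$. \Cref{lem:perm} then gives a permutation $\pi$ of $\{1,\dots,n\}$ with $q_j=w_{\pi(j)}$ for every $j$; that is, $\bq$ is a permutation of $\bw$. Note that \cref{lem:perm} uses only the value of the Jacobian at the origin, so I do not need \cref{lem:grading}~ii) here. The permutation statement comes from part~iii) of \cref{lem:grading} alone, since the origin is $\sigma$-fixed whatever the weights.

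The four consequences I would then read off one at a time.
\begin{enumerate}[label=\roman*)]
\item Reindexing the sum by $\pi$ gives $\sum_j q_j=\sum_j w_{\pi(j)}=\sum_i w_i$. This is consistent with \cref{lem:grading}~i), which gives the same conclusion independently.
\item Let $P$ be the coordinate permutation $P(x)_j=x_{\pi(j)}$. Its $j$-th component $x_{\pi(j)}$ is weight-homogeneous of weight $w_{\pi(j)}=q_j$, so $P$ is graded $\A^n_{\bw}\to\A^n_{\bq}$. Its inverse is the permutation attached to $\pi^{-1}$ and is graded by the same check. This is exactly the remark made after the definition of $\Aut_{\bw}(\A^n)$.
\item The three conditions defining the types are all invariant under permuting the entries: all entries nonzero, some entry zero, and both signs occurring. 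Hence $\bw$ and $\bq$ have the same type.
\item The gcd of a multiset of integers does not depend on the order of its entries, so $\gcd(\bq)=\gcd(\bw)$. It follows that the $g$ defined before \cref{lem:grading}, namely $\gcd(w_1,\dots,w_n,q_1,\dots,q_n)$, equals $\gcd(w_1,\dots,w_n)$. So the normalisation $g=1$ is exactly the condition $\gcd(w_1,\dots,w_n)=1$.
\end{enumerate}

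I expect no real obstacle. The one point needing care is the degenerate case $\bw=\bq=0$ in~iv), where $g$ was only defined under the assumption that the weights are not all zero. I would state that case separately: both gcds are $0$, and the normalisation is vacuous there.
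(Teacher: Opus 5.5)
Your proposal is correct and follows the paper's proof. You apply \cref{lem:perm} using $\detJ{G}(0)\neq0$, and then read off i)--iv) from the fact that sums, sign patterns and gcds are unchanged by reordering, with ii) being the permutation-of-coordinates criterion recorded in \cref{sec-2}. Treating $\bw=\bq=0$ separately in iv), where $g$ was not defined, is a small point of care that the paper leaves implicit.
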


\begin{proof}
Since $\detJ{G}$ is a nonzero constant we have $\detJ{G}(0)=\detJ{G}\neq0$, so \cref{lem:perm} provides a permutation $\pi$ with $q_j=w_{\pi(j)}$ for all $j$; that is, $(q_1,\dots,q_n)$ is a rearrangement of $(w_1,\dots,w_n)$. Now i) holds because equal multisets have equal sums, refining \cref{lem:grading}(i), and ii) is the criterion for a permutation of coordinates to be graded, recorded above. 

For iii) and iv), the type of a weight vector depends only on the signs of its entries and the $\gcd$ only on their values, neither being affected by the ordering and 
\[
  g=\gcd\bigl(\gcd(w_1,\dots,w_n),\,\gcd(q_1,\dots,q_n)\bigr)=\gcd(w_1,\dots,w_n).
\]
\end{proof}

\subsection{A counterexample to the Jacobian conjecture}

We now specialize to the map $F\colon\C^3\to\C^3$ of \cref{eq:map}. 
Take $n=3$, source weights $\bw=(1,-1,-2)$ on $\C[x,y,z]$ and target weights $\bq=(-2,-1,1)$ on $\C[a,b,c]$, so that
\[
  \wt(x^{i}y^{j}z^{\ell})=i-j-2\ell ,
\]
and $xy$ and $x^2z$ have weight $0$. Each component of $F$ is weight-homogeneous, with $\wt(a)=-2$, $\wt(b)=-1$, $\wt(c)=1$, so $F$ is graded, and by the above this is the same as
\begin{equation}\label{eq:equiv}
F\circ\sigt=\taut\circ F \qquad (t\in\Gm),
\end{equation}
where 
\[
\sigt(x,y,z)		=\bigl(tx,\,t^{-1}y,\,t^{-2}z\bigr) \qquad
\taut(a,b,c)		=\bigl(t^{-2}a,\,t^{-1}b,\,tc\bigr).
\]
Both weight vectors are hyperbolic. The grading is unique up to scaling: weight-homogeneity of the three monomials of $c$ alone forces $w_2=-w_1$ and $w_3=-2w_1$, so $\bw$ is a multiple of $(1,-1,-2)$ and is an invariant of the map.

The target weight vector $(-2,-1,1)$ is the reversal of the source weight vector $(1,-1,-2)$, so
\[
\sum q_j=\sum w_i=-2,
\]
 and the weights are distinct, so by \cref{lem:perm} the permutation $\pi$ is the reversal and, by \cref{lem:grading}(iii), the linear part of $F$ at the origin is \emph{anti-diagonal}:
 \[
\frac { \partial a} {\partial z}=1, \quad \frac {\partial b} {\partial y} =1, \quad \frac {\partial c} {\partial x}=2.
 \]
Since $\detJ{F}$ is constant it equals $\detJ{F}(0)$, and since the reversal permutation on three letters is odd,
\[
\detJ {F} \;=\; \detJ {F}(0) \;=\; -(1\cdot 1\cdot 2) \;=\; -2 .
\]
The constant $-2$ is the product of the three anti-diagonal leading coefficients.

\section{Two cases of the Jacobian Conjecture}
\label{sec-3}

In two cases a graded Keller map is a polynomial automorphism for a reason visible in the weights alone: elliptic weights in every dimension, where the orbits give properness, and dimension two for every signature, where the invariant ring is generated by one element and a degree count closes the argument. What remains is the hyperbolic case in dimension three and above, and \cref{eq:map} is an instance of it.

\subsection{The elliptic case in any dimension}\label{subsec:posrigid}

\begin{theorem}\label{thm:rigid}
Let $G\colon\A^n_{\bw}\to\A^n_{\bq}$ be a graded Keller map with elliptic weights $\bw$. Then $G$ is an isomorphism of graded affine spaces: it is bijective, its inverse is polynomial, and $G^{-1}\colon\A^n_{\bq}\to\A^n_{\bw}$ is graded. Forgetting the gradings, $G$ is a polynomial automorphism of $\A^n$.
\end{theorem}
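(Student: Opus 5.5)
Since the elliptic condition is invariant under $\bw\mapsto-\bw$ (reparametrise $t\mapsto t^{-1}$), I will first assume all $w_i>0$. By \cref{cor:keller-weights}, $\bq$ is a permutation of $\bw$, so all $q_j>0$ as well. Then $\C[x]_{\bw}$ and $\C[y]_{\bq}$ are positively graded, with degree-zero piece $\C$, and by \cref{lem:grading}(ii) $G(0)=0$. The plan is to show that $G^{*}\colon\C[y]\to\C[x]$ is an isomorphism of graded rings. I will prove in turn that it makes $\C[x]$ a finite module, then a free module, and then a free module of rank one.

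\emph{Step 1: the fibre over $0$ is $\{0\}$.} A Keller map is étale, so its fibres are finite. The fibre $G^{-1}(0)$ is stable under $\sigma$, because $G(\sigt p)=\taut G(p)=\taut(0)=0$. A finite set stable under the connected group $\Gm$ consists of fixed points. With all weights positive, the only fixed point of $\sigma$ is the origin, since $\sigt p\to0$ as $t\to0$. Hence the zero set of $G_1,\dots,G_n$ is $\{0\}$, and the ideal $I=(G_1,\dots,G_n)$ is primary to the maximal ideal $\mathfrak m=(x_1,\dots,x_n)$.

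\emph{Step 2: finiteness.} I expect this to be the main step, and I would do it by graded Nakayama. Since $I$ is $\mathfrak m$-primary and the grading is positive, $\C[x]/I$ is a finite-dimensional graded vector space. Choose homogeneous lifts $e_1,\dots,e_r$ of a basis. Let $M$ be the $\C[y]$-span of the $e_k$. Induction on weight shows $M=\C[x]$: write a homogeneous $f$ as $\sum c_ke_k+\sum G_jh_j$ with $h_j$ homogeneous of weight $\wt f-q_j<\wt f$, and note that only finitely many weights lie below $\wt f$ in $\Z_{\ge0}$. Thus $G$ is a finite morphism. The positivity of the weights is exactly what makes this induction terminate, and it fails in the hyperbolic case.

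\emph{Step 3: freeness and rank.} $\C[x]$ is Cohen–Macaulay and finite over the regular ring $\C[y]$ of the same dimension, so it is a free $\C[y]$-module. Its rank equals $\dim_\C \C[x]/I$. Because $G$ is étale at $0$ (\cref{lem:perm} makes $\Jm G(0)$ invertible), the $G_j$ generate $\mathfrak m$ in the local ring at $0$. Since $\C[x]/I$ is supported only at $0$ by Step 1, $\C[x]/I=\C$. The rank is therefore $1$ with generator $1$, so $G^{*}$ is bijective and $G$ is an isomorphism with polynomial inverse.

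The inverse is graded because $G\circ\sigt=\taut\circ G$ gives $\sigt\circ G^{-1}=G^{-1}\circ\taut$; equivalently, the inverse of a degree-zero graded ring isomorphism preserves degrees. As an alternative to Step 3, finite plus étale onto the simply connected $\C^n$ gives a trivial covering, which is connected and therefore of degree one. This is the topological route the paper alludes to.
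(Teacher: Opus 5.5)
Your argument is correct, and after a shared first step it takes a different route from the paper's. The paper also gets $G^{-1}(0)=\{0\}$ from the orbit argument. It then works in the Euclidean topology. The gauge $\rho(x)=\max_i|x_i|^{1/w_i}$ and compactness of the slice $\{\rho=1\}$ give properness. A proper local homeomorphism onto the simply connected $\C^n$ is a one-sheeted covering, so $G$ is bijective. The polynomial inverse then comes from the theorem that an injective polynomial self-map of $\C^n$ is an automorphism.

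Your graded Nakayama induction in Step~2 is the algebraic counterpart of that properness step; both use positivity of the weights to push everything toward the origin. Your Step~3 replaces covering-space theory by a computation at one point: the degree is the length of the fibre over $0$, which is $1$ by \'etaleness there. Your route is purely algebraic, so it works verbatim over any algebraically closed field of characteristic zero. It also produces the inverse directly as $(G^{*})^{-1}$, without the injectivity theorem. The paper's route instead exhibits the properness that later fails for the hyperbolic example.

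You can also shorten your proof. Once $\C[x]/I=\C$, i.e.\ $I=\mathfrak m$, the Step~2 induction with the single lift $e_1=1$ already shows that $G^{*}$ is surjective. It is injective because the $G_j$ are algebraically independent, so the Cohen--Macaulay freeness step is not needed. Step~1 can go as well. Invertibility of $\Jm{G}(0)$ means the $G_j$ span $\mathfrak m/\mathfrak m^{2}$; this is the Keller hypothesis at $0$, not \cref{lem:perm}. Graded Nakayama applied to the graded module $\mathfrak m/I$ then gives $I=\mathfrak m$ directly. So for positive weights the conclusion holds for every graded map with $\detJ{G}(0)\neq0$. The Keller condition is then automatic, since \cref{lem:perm} makes $\detJ{G}$ of weight $0$ and $\C[x]_{\bw,0}=\C$.
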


\begin{proof}
By \cref{cor:keller-weights} the target weights $\bq$ are a permutation of $\bw$, hence also elliptic and of the same sign. If all the weights are negative, replacing $t$ by $t^{-1}$ in both actions replaces $(\bw,\bq)$ by $(-\bw,-\bq)$ and changes neither the map nor its being graded; so we may assume $w_i>0$ for all $i$, and then $q_j>0$ for all $j$.

Since $\detJ{G}$ is a nonzero constant, $G$ is \'etale. Steps 1--3 argue on complex points, so we write $x,y,p$ for points of $\A^n(\C)=\C^n$ and use the Euclidean topology throughout; there $G$ is a local biholomorphism.

\emph{Step 1: $G^{-1}(0)=\{0\}$.} The weights $q_j$ are nonzero, so $G(0)=0$ by \cref{lem:grading}(ii). Suppose $G(p)=0$ with $p\neq0$. The orbit map $t\mapsto\sigt(p)$ is non-constant: $\sigt(p)=p$ for all $t$ would force $t^{w_i}p_i=p_i$ for all $t$ and all $i$, hence $p=0$ as all $w_i>0$. But
\[
G(\sigt p)=\taut G(p)=\taut(0)=0,
\]
so the infinite connected set $\{\sigt(p):t\in\C^{\times}\}$ lies in $G^{-1}(0)$, which is discrete because $G$ is \'etale --- a contradiction.

\emph{Step 2: $G$ is proper.} Define
\[
\rho(x)=\max_i|x_i|^{1/w_i} \quad \text{ and } \quad \rho'(y)=\max_j|y_j|^{1/q_j},
\]
which make sense because all $w_i$ and all $q_j$ are positive. Both are continuous and vanish only at the origin. For $t\in\R_{>0}$ we have
\[
\rho(\sigt x)=t\rho(x) \quad \text{ and } \quad \rho'(\taut y)=t\rho'(y).
\]
The slice $S=\{\rho=1\}$ is closed and bounded, hence compact, and every $p\neq0$ is $\sigt(\bar p)$ with $t=\rho(p)$ and $\bar p=\sigma_{1/t}(p)\in S$. By Step~1, $0\notin G(S)$, so
\[
\delta:=\min_{y\in G(S)}\rho'(y)>0
\]
by compactness of $G(S)$. Let $K\subseteq\C^n$ be compact and suppose $p_k\in G^{-1}(K)$ with $|p_k|\to\infty$, so $t_k:=\rho(p_k)\to\infty$. Then
\[
\rho'\bigl(G(p_k)\bigr)=\rho'\bigl(\tau_{t_k}G(\bar p_k)\bigr)=t_k\,\rho'\bigl(G(\bar p_k)\bigr)\ \ge\ t_k\,\delta\ \longrightarrow\ \infty,
\]
contradicting $G(p_k)\in K$. Hence $G^{-1}(K)$ is bounded, and it is closed since $G$ is continuous, so it is compact and $G$ is proper.

\emph{Step 3: conclusion.} A proper local homeomorphism onto a connected, locally compact Hausdorff space is a covering map with finite fibers; hence $G$ is a finite covering of $\C^n$ by $\C^n$. The base is simply connected, so the covering is trivial; the total space is connected, so it has one sheet, and $G$ is bijective. An injective polynomial map $\C^n\to\C^n$ is a polynomial automorphism \cite{rudin}, so $G^{-1}$ is polynomial. Applying $G^{-1}$ on both sides of $G\circ\sigt=\taut\circ G$ gives
\[
G^{-1}\circ\taut=\sigt\circ G^{-1} \qquad (t\in\Gm),
\]
so $G^{-1}$ is graded with source weights $\bq$ and target weights $\bw$.
\end{proof}

\begin{rem}\label{rem:graded-nn}
\Cref{thm:rigid} has a reading outside algebraic geometry. In the graded architectures of \cite{gnn,gvs,gt}, the data carried by a node is a graded vector space and the admissible maps are those respecting the grading, so that the symmetry group of a layer is the group $\Aut_{\bw}(\A^n)$ of \cref{sec-2}. For a polynomial layer map of elliptic weight type, \cref{thm:rigid} says that a nonzero constant Jacobian determinant already forces bijectivity, with polynomial inverse, and the inverse is again graded with the same weights. Constancy of $\detJ{G}$ is the condition one imposes in any case when the log-determinant is to be tractable; in the positive-weight setting invertibility is not an extra hypothesis but a consequence of it. No such statement is available without a grading, where it is the Jacobian Conjecture itself. \Cref{thm:plane} extends it to every signature in dimension two, and the rest of this paper shows that both statements are sharp: the map \cref{eq:map} is graded for the mixed-sign weights $(1,-1,-2)$, has constant Jacobian determinant, and is three-to-one. What separates the two behaviours is the signature of the grading and nothing else.
\end{rem}

Elliptic weights are the setting of weighted projective spaces and of the Veronese morphisms between them; \cref{thm:rigid} says the Jacobian Conjecture is trivially true there for graded maps. A graded counterexample is therefore forced into parabolic or hyperbolic weights, and by \cref{thm:plane} into dimension at least three; the weights $(1,-1,-2)$ of \cref{eq:map} are the smallest for which we know an example to exist.

\begin{rem}\label{rem:vasyunin}
Hyperbolic weights are necessary but not sufficient. Let $G=X+H$ on $\C^{5}$, where
\[
  H=\bigl(0,\;X_1X_3,\;X_1X_4+\tfrac12X_2^{2},\;X_1X_5-X_2X_3,\;\tfrac12X_3^{2}\bigr)
\]
is the map of Vasyunin, reported in \cite{meistersolech}. Weight-homogeneity of the four nonlinear components forces $\bw=\bq=(1,-1,-2,-3,-4)$, so $G$ is graded with hyperbolic weights, and $\Jm{H}$ is nilpotent, so $\detJ{G}=1$ and $G$ is a Keller map. It is an automorphism, since $\deg G=2$ \cite{wang}, but it is not linearly triangularizable \cite{meistersolech}, and it is the extremal example for the strong nilpotence results of \cite{patecheng}. The examples of \cite[\S4]{patecheng} in every dimension are graded in the same way, with hyperbolic or parabolic weights, never elliptic. We return to this map in \cref{rem:vasyunin-descent}.
\end{rem}

\subsection{Dimension two, all signatures}\label{subsec:planethm}

In dimension two there is no graded counterexample to the Jacobian Conjecture, for any weight vector. We prove more: every Keller map of the plane admitting a nontrivial $\Gm$-equivariance is, after a change of coordinates on source and target, one of three normal forms, listed by the type of the weights.

\begin{theorem}\label{thm:plane}
Let $G\colon\A^2\to\A^2$ be a Keller map. Let $\theta$ and $\eta$ be algebraic actions of $\Gm$ on the source and on the target copy of $\A^2$, neither assumed diagonal in the given coordinates, with $\theta$ nontrivial, and suppose
\[
  G\circ\theta_t=\eta_t\circ G \qquad (t\in\Gm).
\]
Then $G$ is a polynomial automorphism. More precisely, there are polynomial automorphisms $\varphi,\psi$ of $\A^2$ such that $\varphi$ carries $\theta$ and $\psi$ carries $\eta$ to diagonal actions $\sigma$ and $\tau$, with source weights $\bw=(w_1,w_2)$, and $\psi\circ G\circ\varphi^{-1}$ is:
\begin{enumerate}
\item triangular if $\bw$ is elliptic, and linear if moreover $w_1=w_2$;
\item affine, if $\bw$ is parabolic;
\item linear, of the form $(x,y)\mapsto(a_0x,b_0y)$ or $(x,y)\mapsto(a_0y,b_0x)$, if $\bw$ is hyperbolic.
\end{enumerate}
\end{theorem}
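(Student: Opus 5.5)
The plan is to reduce to the diagonal (graded) setting of \cref{sec-2} and then read off the normal forms from a weight count. First, I would invoke the linearization theorem for algebraic $\Gm$-actions on the affine plane (Gutwirth; more generally Kambayashi's results on linearizing reductive actions on $\A^2$). It gives polynomial automorphisms $\varphi,\psi$ such that $\sigma_t=\varphi\theta_t\varphi^{-1}$ and $\tau_t=\psi\eta_t\psi^{-1}$ are diagonal. The map $\tilde G=\psi\circ G\circ\varphi^{-1}$ is still Keller, because the Jacobian determinants of automorphisms are nonzero constants, and it is equivariant for $\sigma,\tau$, hence graded.

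I also need $\tau$ to be nontrivial. If $\eta$ were trivial, $\tilde G$ would be constant on every $\sigma$-orbit. Since $\sigma$ is nontrivial, a generic orbit is an infinite connected curve, and it would then lie in a fibre of the étale map $\tilde G$, whose fibres are discrete; this is the argument of Step~1 of \cref{thm:rigid}. By \cref{cor:keller-weights}, $\bq$ is a permutation of $\bw$, and the type is shared. After dividing by $\gcd(w_1,w_2)$ and possibly replacing $t$ by $t^{-1}$, we may normalise the signs.

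The elliptic case follows from \cref{thm:rigid}, which already gives an automorphism; I then extract the triangular shape. Take $0<w_1\le w_2$. If $w_1<w_2$, the only monomial of weight $w_1$ is $x$. The monomials of weight $w_2$ are $y$ and possibly $x^k$ with $kw_1=w_2$. After composing with a coordinate swap on the target if necessary, $\tilde G=(\alpha x,\ \beta y+\gamma x^k)$, which is triangular. If $w_1=w_2$, both components are linear forms.

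In the parabolic case, write $\bw=(0,w_2)$ with $w_2=1$ after normalising. The component of weight $0$ is some $f(x)$, and the component of weight $1$ is $g(x)y$. The Keller condition reads $f'(x)g(x)=\text{const}\neq0$, which forces $\deg f=1$ and $g$ constant, so $\tilde G$ is affine. The case where both weights vanish is excluded because $\theta$ is nontrivial.

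The hyperbolic case is where I expect the real work. Take $w_1>0>w_2$ coprime. The invariant ring is $\C[u]$ with $u=x^ay^b$, where $a=-w_2>0$ and $b=w_1>0$. Every homogeneous polynomial of weight $w_1$ is $x\,f(u)$; when $w_1=-w_2$ one also has monomials $y^k$, and I will fold these in by treating the weight-$w_1$ piece as $\C[u]$-generated by $x$, with $y$ admitted only when $a=b=1$. Similarly, every homogeneous polynomial of weight $w_2$ is $y\,g(u)$.

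In the unswapped case $\tilde G=(xf(u),\,yg(u))$, and a direct computation gives
\[
\detJ{\tilde G}=fg+u\bigl(a f'g+b fg'\bigr).
\]
If $\deg f=m$ and $\deg g=n$, the coefficient of $u^{m+n}$ is $(1+am+bn)\,\mathrm{lc}(f)\,\mathrm{lc}(g)$, which is nonzero. Constancy therefore forces $m=n=0$, so $\tilde G$ is diagonal linear.

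In the swapped case, which needs $w_1=-w_2=1$ and $u=xy$, we have $\tilde G=(y f(u),\,x g(u))$. The same expansion gives $\detJ{\tilde G}=-(fg+u(f'g+fg'))$, and the same leading-coefficient argument shows $f$ and $g$ are constant. This yields the anti-diagonal form.

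The main obstacles are two. The first is citing the linearization theorem correctly, since it is genuinely deep. The second is bookkeeping the coincidences of weights, where extra monomials could mix into a component; in each such case the leading-coefficient or degree argument has to be rechecked.
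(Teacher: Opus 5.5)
Your proof is correct and follows the paper's outline. You linearize and diagonalize both actions by Gutwirth's theorem, use \cref{cor:keller-weights} to make $\bq$ a permutation of $\bw$, and read off the graded pieces type by type. The elliptic and parabolic cases are the paper's computations; the appeal to \cref{thm:rigid} is redundant there, since the triangular form with $\alpha\beta\neq0$ is visibly invertible.

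The one genuine difference is the last step of the hyperbolic case. You read off the coefficient $(1+am+bn)\,\mathrm{lc}(f)\,\mathrm{lc}(g)$ of $u^{m+n}$ in $\detJ{\tilde G}=fg+u(af'g+bfg')$ directly. The paper instead descends to the invariant line, $\Gbar(u)=uf^{a}g^{b}$ in your notation, proves $\Gbar'=f^{a-1}g^{b-1}\detJ{\tilde G}$, and compares degrees. These are the same computation, since the leading coefficient of $\Gbar'$ is $(1+am+bn)\,\mathrm{lc}(f)^{a}\mathrm{lc}(g)^{b}$. Yours is shorter and self-contained. The paper's presents the Keller condition as a condition on the quotient map, which is the prototype for \cref{thm:descent} and \cref{thm:master}, and for the discussion in \cref{sec:9} of why this exact degree count fails from dimension three on.

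Two bookkeeping remarks in your hyperbolic case are wrong, though neither damages the argument.
\begin{enumerate}
\item The weight-$w_1$ piece is exactly $x\,\C[u]$ for every coprime pair. Since $y^k$ has weight $kw_2<0$, there is nothing to fold in when $w_1=-w_2$.
\item The swapped case $\bq=(w_2,w_1)$ is not confined to $w_1=-w_2=1$; it occurs for every hyperbolic $\bw$, e.g.\ $G=(y,x)$ with $\bw=(1,-2)$, $\bq=(-2,1)$. As in the paper, compose with the transposition of the target coordinates to reach $\bq=\bw$, in every type. The same determinant computation with $u=x^ay^b$ then gives the anti-diagonal normal form in general.
\end{enumerate}
You should also record $f,g\neq0$ (else $\detJ{\tilde G}=0$) before taking leading coefficients. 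Finally, the separate argument that $\tau$ is nontrivial is unnecessary, since $\bq$ is a permutation of $\bw\neq0$.
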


\begin{proof}
The actions $\theta$ and $\eta$ are not assumed diagonal, so no weight vector is attached to them at the outset; the weight vectors of \cref{sec-2} appear only after linearization. Every algebraic $\Gm$-action on $\A^2$ is linearizable by a polynomial automorphism \cite{gutwirth}, and a linear action is diagonalizable, every rational representation of $\Gm$ being a sum of characters; so we may choose $\varphi,\psi$ for which
\[
  \sigt=\varphi\circ\theta_t\circ\varphi^{-1}
  \qquad\text{and}\qquad
  \taut=\psi\circ\eta_t\circ\psi^{-1}
\]
are diagonal. Put $G':=\psi\circ G\circ\varphi^{-1}$. Then
\[
  G'\circ\sigt
  =\psi\circ G\circ\theta_t\circ\varphi^{-1}
  =\psi\circ\eta_t\circ G\circ\varphi^{-1}
  =\taut\circ G' ,
\]
so $G'$ is graded in the sense of \cref{sec-2} for the weight vectors of $\sigma$ and $\tau$. Since a polynomial automorphism has constant nonzero Jacobian, $G'$ is again a Keller map, and $G$ is a polynomial automorphism if and only if $G'$ is. We may therefore assume from the outset that $G$ is graded, with source weights $\bw=(w_1,w_2)$ and target weights $\bq=(q_1,q_2)$; and we remain free to compose with a permutation of the source coordinates, absorbing it into $\varphi$, and with a permutation of the target coordinates, absorbing it into $\psi$.

By \cref{cor:keller-weights} the vector $\bq$ is a permutation of $\bw$, so composing with a permutation of the target coordinates we may assume $\bq=\bw$; in the hyperbolic case the alternative $\bq=(w_2,w_1)$ leads in the same way to the second normal form in (3), and we return to it at the end. Replacing $t$ by $t^{-1}$ in both actions replaces $\bw$ by $-\bw$ and changes neither $G$ nor its being graded, so we may fix the sign of $w_1$ as convenient. Since $\theta$ is nontrivial so is $\sigma$, and therefore $\bw\neq(0,0)$.

\emph{The elliptic case.} Here the $w_i$ are nonzero and of one sign; after the substitution above we may take them positive, and after a permutation of the source coordinates that $0<w_1\le w_2$. The weight-$\ell$ piece of $\C[x,y]$ is spanned by the monomials $x^iy^j$ with $iw_1+jw_2=\ell$, and since both weights are positive only finitely many $(i,j)$ occur.

Suppose first $w_1<w_2$. A monomial of weight $w_1$ has $j=0$, since $j\ge1$ would give $iw_1+jw_2\ge w_2>w_1$, and then $i=1$; so the weight-$w_1$ piece is $\C x$. A monomial of weight $w_2$ has $j\le1$; if $j=1$ then $i=0$, and if $j=0$ then $iw_1=w_2$, which is solvable only when $w_1$ divides $w_2$. Hence the weight-$w_2$ piece is $\C y$, together with $\C x^{w_2/w_1}$ in that case. Therefore
\[
  G=\bigl(a_0x,\;b_0y+b_1x^{w_2/w_1}\bigr), \qquad \detJ{G}=a_0b_0 ,
\]
the term $b_1x^{w_2/w_1}$ being absent unless $w_1\mid w_2$. The Keller condition says $a_0b_0\neq0$, so $G$ is triangular with nonzero diagonal, hence a polynomial automorphism. If instead $w_1=w_2$, then $iw_1+jw_2=w_1$ forces $i+j=1$, so both pieces equal $\C x+\C y$ and $G$ is linear, with invertible matrix because $\detJ{G}\neq0$.

\emph{The parabolic case.} Some $w_i$ vanishes, and not both, so after a permutation of the source coordinates $\bw=(w_1,0)$ with $w_1\neq0$, and after the substitution above $w_1>0$. A monomial $x^iy^j$ has weight $iw_1$, so it is invariant exactly when $i=0$ and of weight $w_1$ exactly when $i=1$: the weight-zero piece is $\C[y]$ and the weight-$w_1$ piece is $x\,\C[y]$. Hence
\[
  G=\bigl(xf(y),\,g(y)\bigr), \qquad f,g\in\C[y],
\]
and, the matrix being triangular,
\[
  \detJ{G}=f(y)\,g'(y).
\]
A product of two polynomials in $y$ is a nonzero constant only if both factors are nonzero constants, so $f=a_0$ and $g'=b_0$ with $a_0b_0\neq0$, whence $g=b_0y+b_1$. Thus $G=(a_0x,\,b_0y+b_1)$ is affine, and a polynomial automorphism.

\emph{The hyperbolic case.} The $w_i$ are nonzero of opposite signs, so after a permutation of the source coordinates and the substitution above $\bw=(p,-q)$ with $p,q>0$; and after the reduction by $g=\gcd$ of \cref{sec-2}, which does not change the graded maps, we may assume $\gcd(p,q)=1$. A monomial $x^iy^j$ has weight $ip-jq$, so it is invariant exactly when $ip=jq$, that is, since $\gcd(p,q)=1$, when $i=qk$ and $j=pk$ for some $k\ge0$. Hence
\[
  \C[x,y]^{\sigma}=\C[u], \qquad u=x^{q}y^{p},
\]
freely. The same computation with $ip-jq=p$ gives $i=1+qk$ and $j=pk$, and with $ip-jq=-q$ gives $i=qk$ and $j=1+pk$, so the weight-$p$ piece is $x\,\C[u]$ and the weight-$(-q)$ piece is $y\,\C[u]$. Therefore
\[
  G=\bigl(xf(u),\,yg(u)\bigr), \qquad f,g\in\C[u],
\]
with $f$ and $g$ nonzero, since a vanishing component would make $\detJ{G}=0$. From $u_x=qu/x$ and $u_y=pu/y$ one computes
\[
  \detJ{G}=\bigl(f+quf'\bigr)\bigl(g+pug'\bigr)-pq\,u^{2}f'g'
          =fg+u\bigl(pfg'+qgf'\bigr),
\]
the terms in $u^{2}f'g'$ cancelling. The invariant ring of the target action is likewise $\C[u]$, and $G^{*}u=G_1^{q}G_2^{p}=u\,f^{q}g^{p}$, so $G$ descends to the polynomial self-map
\[
  \Gbar(u)=u\,f(u)^{q}g(u)^{p}
\]
of $\A^1$, and the Leibniz rule gives
\[
  \Gbar'(u)=f^{\,q-1}g^{\,p-1}\Bigl[fg+u\bigl(pfg'+qgf'\bigr)\Bigr]
          =f^{\,q-1}g^{\,p-1}\,\detJ{G}.
\]
Now suppose $\detJ{G}=\kappa\in\C^{\times}$. Then $\Gbar$ has degree $d=1+q\deg f+p\deg g$, so $\Gbar'$ has degree $d-1$; on the other hand $\Gbar'=\kappa f^{\,q-1}g^{\,p-1}$ has degree
\[
  (q-1)\deg f+(p-1)\deg g=(d-1)-\deg f-\deg g .
\]
Comparing the two gives $\deg f+\deg g=0$, hence $\deg f=\deg g=0$. Thus $f=a_0$ and $g=b_0$ are constants with $a_0b_0=\kappa\neq0$, and $G=(a_0x,\,b_0y)$ is linear and a polynomial automorphism.

Finally, if $\bq=(w_2,w_1)$ rather than $\bq=\bw$, then composing $G$ with the transposition of the target coordinates, absorbed into $\psi$, returns to the case treated above. In the elliptic and parabolic cases the resulting normal forms are again triangular and affine respectively, and in the hyperbolic case one obtains $(x,y)\mapsto(a_0y,b_0x)$, the second form in (3). In every case $G$ is a polynomial automorphism.
\end{proof}
 
\section{Descent and the fiber cubic}
\label{sec-4}

\cref{sec-2} and \cref{sec-3} concerned graded Keller maps in general. We now turn to the single map \cref{eq:map} and study it for the next three sections. The organizing fact is that the grading reduces its fibres to the roots of one cubic in one variable, with coefficients in the invariants of the target action; everything we prove about \cref{eq:map} is read off that cubic.

The invariant rings of the two actions are polynomial:
\[
\C[x,y,z]^{\sigma}=\C[u,v],\quad u=xy,\; v=x^2z; \qquad \C[a,b,c]^{\tau}=\C[P,Q],\quad P=bc,\; Q=ac^2 .
\]
Generic orbits are free and closed, and by \cref{eq:equiv} the map $F$ carries orbits bijectively to orbits, so $F$ descends to $\Fbar\colon\A^2_{u,v}\to\A^2_{P,Q}$.
Set
\[
L=2-3u-v,\qquad M=v(1+u)^2+u^2(4+3u),
\]
so that $c=xL$ and, directly from \cref{eq:map},
\begin{equation}\label{eq:PQ}
P=L\,(u+3M),\qquad Q=L^2(1+u)\,M .
\end{equation}
Substituting $v=2-3u-L$ into $M$, the cubic terms cancel identically:
\begin{equation}\label{eq:M}
M=(2+u)-L(1+u)^2 .
\end{equation}

\begin{lem}\label{prop:identities}
Set $e=1+u$ and $s=Le$. Then:
\begin{enumerate}
\item the identities \cref{eq:PQ} become
\[
P=4s+2L-3s^2, \qquad Q=s^2+sL-s^3 ;
\]
\item eliminating $L$,
\begin{equation}\label{eq:cubic}
s^3-2s^2+P\,s-2Q=0,\qquad P=bc,\ Q=ac^2 ;
\end{equation}

\item of the roots of \cref{eq:cubic} with $L\neq0$ the preimage is recovered rationally:
\[
L=\tfrac12\!\left(P-4s+3s^2\right),\quad e=s/L,\quad u=e-1,\quad v=2-3u-L,
\]
\[
x=c/L,\qquad y=u/x,\qquad z=v/x^2 ;
\]

\item the elementary symmetric functions of a fiber are
\[
s_1+s_2+s_3=2,\qquad \textstyle\sum_{i<j}s_is_j=bc,\qquad s_1s_2s_3=2ac^2 .
\]
\end{enumerate}
\end{lem}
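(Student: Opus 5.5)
The plan is to verify each item by direct substitution into the two relations \cref{eq:PQ} and \cref{eq:M}, treating $L$ and $e$ as the independent quantities; nothing beyond polynomial algebra is needed. First I rewrite \cref{eq:M} in terms of $e=1+u$: since $2+u=e+1$, it reads $M=(e+1)-Le^{2}$.

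\emph{Item (1).} Substitute this $M$ and $u=e-1$ into $P=L(u+3M)$. This gives $P=L\bigl(4e+2-3Le^{2}\bigr)$, which is $4s+2L-3s^{2}$ once $s=Le$ is inserted. In the same way, $Q=L^{2}eM=L^{2}e(e+1)-L^{3}e^{3}=s^{2}+sL-s^{3}$.

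\emph{Item (2).} The expression for $P$ is linear in $L$, with coefficient $2$, so I solve it as $2L=P-4s+3s^{2}$. Substituting into $2Q=2s^{2}+s\cdot 2L-2s^{3}$ gives $2Q=s^{3}-2s^{2}+Ps$, which is \cref{eq:cubic}. The identifications $P=bc$ and $Q=ac^{2}$ are the definitions of the target invariants, and I carry them along unchanged.

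\emph{Item (3).} This is the inversion of the chain $(x,y,z)\mapsto(u,v)\mapsto(L,e)\mapsto s$, read backwards.
\begin{itemize}
\item Given a root $s$, the formula for $L$ is the one just solved from (1).
\item The hypothesis $L\neq0$ is exactly what is needed to recover $e=s/L$, and then $u=e-1$.
\item Next, $v=2-3u-L$ is the definition of $L$ rearranged.
\item Then $x=c/L$ comes from $c=xL$, and $y=u/x$, $z=v/x^{2}$ come from $u=xy$, $v=x^{2}z$.
\end{itemize}
Here I must note that dividing by $x$ requires $x\neq0$, which holds precisely when $c\neq0$ (given $L\neq0$). So the statement should be read over the locus $c\neq0$, where generic orbits are free, and I will say so explicitly. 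One also checks that the point so constructed does map to $(a,b,c)$. The relation $c=xL$ holds by construction. Recovering $a$ and $b$ individually from $P$ and $Q$ uses $c\neq0$: we have $b=P/c$ and $a=Q/c^{2}$.

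\emph{Item (4).} This is Vieta's formulas applied to the monic cubic \cref{eq:cubic}: the sum of the roots is $2$, the second elementary symmetric function is $P=bc$, and the product is $2Q=2ac^{2}$. The point requiring care is the meaning of ``the $s_i$ of a fiber''. I would argue that, over the generic locus where $c\neq0$ and the cubic has three distinct roots with $L\neq0$, item (3) gives a bijection between the fiber of $F$ and the roots of the cubic. The fiber therefore has three points, consistent with $F$ being three-to-one, and the symmetric functions of their $s$-values are the coefficients of \cref{eq:cubic}.

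The main obstacle is this identification of the fiber with the root set, specifically showing that the map from fiber points to roots is injective and surjective. Injectivity is immediate, since item (3) recovers the point from $s$. Surjectivity is the content of (3): every root with $L\neq0$ yields a genuine preimage. Outside the generic locus I would state (4) only as an identity among roots, counted with multiplicity.
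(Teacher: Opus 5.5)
Your proposal is correct and follows the paper's proof essentially step for step. Both rewrite \cref{eq:M} as $M=(e+1)-Le^{2}$ and substitute into \cref{eq:PQ}, solve the identity for $P$ (linear in $L$) and eliminate $L$ to get the cubic, invert the substitutions for (3), and read (4) off the coefficients by Vieta. Your added caveat that the recovery in (3) needs $c\neq0$, so that $x=c/L\neq0$, is a genuine sharpening that the paper leaves implicit here and states explicitly only in \cref{prop:disc}(3).
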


\begin{proof}
Using \cref{eq:M},
\[
u+3M=(e-1)+3(1+e-Le^2)=4e+2-3Le^2,
\]
so
\[
P=L(4e+2-3Le^2)=4s+2L-3s^2.
\]
Similarly
\[
Q=L^2e\,(1+e-Le^2)=sL+s^2-s^3.
\]
Solving the first identity for $L$ and substituting into the second gives \cref{eq:cubic}. The recovery formulas invert the substitutions step by step, using $c=xL$; and (4) reads off the coefficients of \cref{eq:cubic}.
\end{proof}

\begin{lem}\label{lem:basechange}
Let $E/F$ be a finite field extension and $T$ transcendental over $E$. Then $[E(T):F(T)]=[E:F]$.
\end{lem}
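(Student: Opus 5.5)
The plan is to identify $E(T)$ with the compositum $E\cdot F(T)$ inside a fixed algebraic closure of $F(T)$, and then to compare degrees by bounding from both sides. First I note that $E(T)$ is generated over $F(T)$ by $E$. If $e_1,\dots,e_m$ is an $F$-basis of $E$ with $m=[E:F]$, every element of $E[T]$ is an $F[T]$-linear combination of the $e_i$. Since $E(T)$ is the fraction field of $E[T]$, and $E[T]$ is already a field after inverting $F[T]\setminus\{0\}$, the $e_i$ span $E(T)$ over $F(T)$. For this last point, each nonzero $g\in E[T]$ divides its norm $N(g)=\prod_\sigma \sigma(g)\in F[T]$, taken in the Galois closure; alternatively, $E[T]$ is integral over $F[T]$, so $F(T)\otimes_{F[T]}E[T]$ is a domain integral over a field, hence a field. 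This gives $[E(T):F(T)]\le m$.

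For the reverse inequality I will show that $e_1,\dots,e_m$ are linearly independent over $F(T)$. Suppose $\sum_i r_i(T)e_i=0$ with $r_i\in F(T)$ not all zero. Clearing a common denominator, we may take $r_i\in F[T]$. Write $r_i=\sum_k r_{ik}T^k$ with $r_{ik}\in F$. Then $\sum_k\bigl(\sum_i r_{ik}e_i\bigr)T^k=0$ in $E[T]$. Since $T$ is transcendental over $E$, every coefficient $\sum_i r_{ik}e_i$ vanishes, and since the $e_i$ are $F$-independent, every $r_{ik}=0$. This contradicts the assumption that not all $r_i$ are zero. So the $e_i$ form a basis and the degrees agree.

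The only delicate point is the first step, namely knowing that $F(T)[e_1,\dots,e_m]$ is already the field $E(T)$ rather than a proper subring. I will handle it by the standard fact that a ring generated by algebraic elements over a field is a field: $F(T)[E]$ is a domain that is finite-dimensional over $F(T)$, so multiplication by any nonzero element is injective, hence surjective. This ring contains $E$ and $T$ and lies inside $E(T)$, so it equals $E(T)$. With that, the lemma follows, and it will be used to compare degrees of the field extensions attached to the fiber cubic \cref{eq:cubic} over $\Q(P)$ or $\Q(Q)$.
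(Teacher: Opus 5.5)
Your proof is correct and is the paper's argument: an $F$-basis of $E$ spans $E(T)$ over $F(T)$, and it stays independent because you can clear denominators and compare coefficients of powers of $T$. What you add is a justification of the step from spanning $E[T]$ over $F[T]$ to spanning $E(T)$ over $F(T)$, via the finite-dimensional domain $F(T)[E]$ being a field, which the paper passes over with ``hence''. Of your three justifications for that step, the norm $\prod_\sigma\sigma(g)$ over a Galois closure needs $E/F$ separable as written, but the integrality argument and the finite-dimensional-domain argument hold in general.
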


\begin{proof}
A basis $e_1,\dots,e_n$ of $E/F$ spans $E[T]$ over $F[T]$, hence $E(T)$ over $F(T)$. If
\[
\sum_i f_i(T)\,e_i=0
\]
with $f_i\in F[T]$ not all zero, then comparing coefficients of each power of $T$ gives $F$-linear relations among the $e_i$, forcing every coefficient of every $f_i$ to vanish. So the $e_i$ remain independent over $F(T)$.
\end{proof}

\begin{prop}\label{prop:degree3}
Write $\Omega=\C(x,y,z)$ and let $K=\C(a,b,c)\subseteq\Omega$ be the subfield generated by the components of $F$. Then:
\begin{enumerate}
\item $a,b,c$ are algebraically independent over $\C$, so $K$ is rational of transcendence degree $3$;
\item $[\Omega:K]=3$;
\item the cubic \cref{eq:cubic} is irreducible over $K$, hence also over its subfield $\C(P,Q)$, and is the minimal polynomial of $s$ over both.
\end{enumerate}
In particular the generic fiber of $F$ has exactly three points, independently of the announcement.
\end{prop}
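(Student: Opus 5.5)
(1) follows from the Jacobian criterion: $\detJ{F}=-2\neq0$, so the differentials $da,db,dc$ are linearly independent in $\Omega^1_{\Omega/\C}$ and $a,b,c$ are algebraically independent. Hence $\trdeg_\C K=3$ and $K$ is purely transcendental. Since $\Omega$ also has transcendence degree $3$ and is finitely generated, $\Omega/K$ is finite.

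For (2) and (3) I will build a tower through the invariant fields. Set $\Omega^\sigma=\C(u,v)$ and $K^\tau=\C(P,Q)$. Write $\Omega=\C(u,v)(x)$, which is valid because $y=u/x$ and $z=v/x^2$. Here $x$ is transcendental over $\C(u,v)$, and similarly $K=\C(P,Q)(c)$ with $c$ transcendental over $\C(P,Q)$, since $a=Q/c^2$ and $b=P/c$. By \cref{prop:identities}(3), every element of $\C(u,v)$ lies in $\C(P,Q)(s)$. Conversely $s=Le$ lies in $\C(u,v)$, so $\C(u,v)=\C(P,Q,s)$. Also $x=c/L$ with $L\in\C(u,v)$, so $\Omega=\C(u,v)(c)$, and $c$ is transcendental over $\C(u,v)$ for transcendence-degree reasons. \cref{lem:basechange}, applied with $E=\C(u,v)$, $F=\C(P,Q)$ and $T=c$, then gives
\[
[\Omega:K]=[\C(u,v)(c):\C(P,Q)(c)]=[\C(u,v):\C(P,Q)]=[\C(P,Q)(s):\C(P,Q)].
\]
So everything reduces to showing that the cubic \cref{eq:cubic} is irreducible over $\C(P,Q)$.

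That irreducibility is the main step. The cubic $s^3-2s^2+Ps-2Q$ is monic in $s$ over $\C[P,Q]$ and linear in $Q$ with coefficient $-2$. As a polynomial in $\C[P][s][Q]$ it has degree $1$ in $Q$, and its $Q$-coefficient is a unit, so it is primitive in that presentation and therefore irreducible in $\C[P,Q,s]$. By Gauss's lemma (it is monic in $s$) it is irreducible over $\C(P,Q)$. Hence it is the minimal polynomial of $s$, and $[\Omega:K]=3$.

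Irreducibility over $K$ follows from the same lemma. We have $K(s)=\C(P,Q,s)(c)$, and \cref{lem:basechange} gives $[K(s):K]=3$. The cubic therefore stays the minimal polynomial of $s$ over $K$, and it is irreducible over the subfield $\C(P,Q)$ as well, as (3) claims. Finally, $F$ is dominant and generically étale since $\detJ{F}\neq0$, so the generic fibre has $[\Omega:K]=3$ points. One point needs care: $\C(P,Q)$ must be rational of transcendence degree $2$, meaning $P,Q$ are algebraically independent. This holds because $\C(P,Q)(c)=K$ has transcendence degree $3$.
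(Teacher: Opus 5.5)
Your proof is correct and follows the paper's route: the tower $\C(P,Q)\subseteq\C(u,v)$, the identification $\C(u,v)=\C(P,Q,s)$ from the recovery formulas, and \cref{lem:basechange} with $T=c$, applied once for $[\Omega:K]$ and once for $[K(s):K]$. The only variation is the degree-three step. You prove irreducibility of the cubic over $\C(P,Q)$ directly, since it is linear in $Q$ with unit coefficient and Gauss's lemma applies once $P,Q$ are independent by $\trdeg K=3$, and then you read off the degree. The paper instead gets $P,s$ independent from $\C(u,v)=\C(P,s)$, computes the degree as that of $s\mapsto Q$ over $\C(P)$, and deduces irreducibility afterwards, so your version just makes explicit the standard fact the paper uses at that point.
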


\begin{proof}
(1) In characteristic zero, $a,b,c\in\C(x,y,z)$ are algebraically independent if and only if
\[
\det\bigl(\partial(a,b,c)/\partial(x,y,z)\bigr)\neq0
\]
where the determinant is $-2$.

(2) We first identify the invariant subfields. On $\{x\neq0\}$ the action $\sigma$ is free, and each orbit meets the slice $\{x=1\}$ exactly once, at
\[
\sigma_{1/x}(x,y,z)=(1,u,v),
\]
a $\sigma$-invariant rational function is therefore determined by its restriction to the slice, where $u,v$ are coordinates. Hence $\Omsig=\C(u,v)$.

The same argument on the target, with the slice $\{c=1\}$ on which $P,Q$ restrict to $b,a$, gives $\C(a,b,c)^{\tau}=\C(P,Q)$; and since $F^*$ intertwines the actions by \cref{eq:equiv}, the inclusion $K\subseteq\Omega$ restricts to
\[
\Ktau=\C(P,Q)\subseteq\C(u,v)=\Omsig.
\]
Next, $[\C(u,v):\C(P,Q)]=3$. Indeed
\[
\C(u,v)=\C(e,L)=\C(s,L)=\C(P,s):
\]
the first equality because $e=1+u$, $L=2-3u-v$ is a triangular change of variables, the second because $s=Le$ and $e=s/L$, the third because
\[
L=\tfrac12(P-4s+3s^2)
\]
while $P=4s+2L-3s^2$ (\cref{prop:identities}). Thus $P,s$ are algebraically independent, and by \cref{eq:cubic}, $Q=\tfrac12(s^3-2s^2+Ps)$ is a polynomial of degree $3$ in $s$ over $\C(P)$; therefore
\[
[\C(u,v):\C(P,Q)]=[\C(P)(s):\C(P)(Q)]=3,
\]
the degree of the rational map $s\mapsto Q$ over $\C(P)$.

Finally, $c=xL$ is transcendental over $\Omsig$, since
\[
\Omega=\C(x,u,v)=\Omsig(c)
\]
has transcendence degree $3=\trdeg\Omsig+1$; and
\[
K=\C(c,P,Q)=\Ktau(c)
\]
because $a=Q/c^2$ and $b=P/c$. \cref{lem:basechange} applied to $\C(u,v)/\C(P,Q)$ with $T=c$ gives
\[
[\Omega:K]=\bigl[\Omsig(c):\Ktau(c)\bigr]=\bigl[\Omsig:\Ktau\bigr]=3 .
\]

(3) The recovery formulas of \cref{prop:identities}(3) express $x,y,z$ rationally in $s$ and $a,b,c$, so $K(s)=\Omega$ and $[K(s):K]=3$. The minimal polynomial of $s$ over $K$ has degree $3$ and divides the monic cubic \cref{eq:cubic} in $K[s]$; being of the same degree, it equals it. Irreducibility over $K$ implies irreducibility over the subfield $\C(P,Q)$, since a factorization over the smaller field is one over the larger. The final statement holds because the generic fiber cardinality of a dominant map of irreducible complex varieties equals the degree of the function-field extension.
\end{proof}

\begin{theorem}\label{thm:main}
For the map \cref{eq:map}:
\begin{enumerate}
\item $\C(x,y,z)=\C(a,b,c)(s)$ with minimal polynomial \cref{eq:cubic}; in particular $\deg F=\deg\Fbar=3$.
\item The discriminant of \cref{eq:cubic},
\[
\Delta(P,Q)=-4P^3+4P^2+72PQ-64Q-108Q^2,
\]
is not a square in $\C(P,Q)$, so the geometric monodromy group of $F$ is $S_3$. Since $S_3$ is solvable, the inverse branches of $F$ are expressible by radicals in $a,b,c$.
\end{enumerate}
\end{theorem}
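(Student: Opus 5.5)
Part (1) is mostly a restatement of \cref{prop:degree3}. By \cref{prop:identities}(3) the coordinates $x,y,z$ are rational in $s$ and $a,b,c$, and $s\in\Omega$, so $\C(x,y,z)=K(s)$. By \cref{prop:degree3}(3) the minimal polynomial of $s$ over $K$ is the cubic \cref{eq:cubic}, and therefore $\deg F=[\Omega:K]=3$. For $\Fbar$, I will reuse the computation inside the proof of \cref{prop:degree3}(2): there $[\C(u,v):\C(P,Q)]=3$, which is exactly the degree of $\Fbar\colon\A^2_{u,v}\to\A^2_{P,Q}$, because $\Fbar^*$ realises the inclusion $\Ktau\subseteq\Omsig$.

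For part (2) the first task is the discriminant. For a monic cubic $s^3+\alpha s^2+\beta s+\gamma$,
\[
\mathrm{disc}=\alpha^2\beta^2-4\beta^3-4\alpha^3\gamma-27\gamma^2+18\alpha\beta\gamma .
\]
Substituting $\alpha=-2$, $\beta=P$, $\gamma=-2Q$ term by term gives $4P^2-4P^3-64Q-108Q^2+72PQ$, which is the stated $\Delta$.

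Next I show that $\Delta$ is not a square in $\C(P,Q)$. Since $\C[P,Q]$ is a UFD, it suffices to show that $\Delta$ is not a square in $\C[P,Q]$. Two routes are available. The first is a degree argument: $\Delta$ has total degree $3$, which is odd, so it cannot be the square of a polynomial. The second is a specialisation check: setting $Q=0$ gives $4P^2(1-P)$, and $1-P$ is not a square. Either argument is enough.

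Now for the monodromy. The cubic is irreducible over $\C(P,Q)$ by \cref{prop:degree3}(3), so the Galois group of its splitting field acts transitively and is either $A_3$ or $S_3$. Because the discriminant is not a square, the group is not contained in $A_3$, so it is $S_3$. I need to say that this Galois group is the geometric monodromy group of $F$, and I will argue this in two steps. First, the Galois closure of $\Omega/K$ is $K(s_1,s_2,s_3)$. Second, \cref{lem:basechange} together with $K=\Ktau(c)$ and $\Omega=\Omsig(c)$ shows that adjoining the transcendental $c$ changes neither degrees nor the discriminant. Hence the Galois group over $K$ equals the group over $\C(P,Q)$, namely $S_3$. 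The standard identification of geometric monodromy with the Galois group of the Galois closure over $\C$ then finishes this step.

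Finally, solvability. $S_3$ is solvable, so by Galois' theorem $s$ is expressible by radicals over $K$; concretely, Cardano's formula applied to \cref{eq:cubic} does this. The recovery formulas of \cref{prop:identities}(3) then express $x,y,z$ rationally in $s,a,b,c$.

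I expect the main obstacle to be the identification of the Galois group over $K$ with the one over $\C(P,Q)$. I will handle it by noting that the cubic has coefficients in $\C(P,Q)$, and that $K=\C(P,Q)(c)$ is purely transcendental over $\C(P,Q)$, so it is linearly disjoint from any algebraic extension of $\C(P,Q)$. The splitting field therefore keeps the same degree $6$ after adjoining $c$.
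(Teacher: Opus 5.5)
Your proof is correct and follows the paper's: part (1) is read off \cref{prop:degree3}, and part (2) combines irreducibility of the cubic with the specialisation $\Delta(P,0)=4P^2(1-P)$ to conclude that the Galois group is $S_3$. Three details the paper leaves implicit are sound as you give them: the explicit check of the discriminant, the odd-total-degree shortcut for non-squareness, and the linear-disjointness step that carries the Galois group from $\C(P,Q)$ up to $K=\C(P,Q)(c)$.
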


\begin{proof}
(1) is \cref{prop:degree3}. For (2),
\[
\Delta(P,0)=4P^2(1-P)
\]
is not a square, and an irreducible separable cubic with nonsquare discriminant (\cref{prop:degree3}(3)) has Galois group $S_3$.
\end{proof}

\section{Non-properness and the discriminant}
\label{sec:disc}

The cubic \cref{eq:cubic} makes the failure of properness visible: as a target point moves onto the discriminant, no two preimages ever meet. We now make this precise.

\begin{prop}\label{prop:disc}
\begin{enumerate}
\item $\Fbar$ contracts the line $\{L=0\}\subset\A^2_{u,v}$ to the origin $(P,Q)=(0,0)$.
\item A root $s_0$ of \cref{eq:cubic} is a multiple root if and only if $L(s_0)=\tfrac12(P-4s_0+3s_0^2)=0$. The discriminant locus $\{\Delta=0\}$ is the rational curve
\[
(P,Q)=\bigl(4s-3s^2,\ s^2-s^3\bigr),\qquad s\in\A^1 ,
\]
whose only singular point is the cusp $(P,Q)=(\tfrac43,\tfrac4{27})$, the image of $s=\tfrac23$.
\item Let $q=(a,b,c)$ with $c\neq0$. If $(P,Q)(q)\notin\{\Delta=0\}$ then $F^{-1}(q)$ has three points. If $(P,Q)(q)\in\{\Delta=0\}$ and $(P,Q)(q)\neq(\tfrac43,\tfrac4{27})$, then \cref{eq:cubic} has a double root with $L=0$, contributing no preimage, and a simple root with $L\neq0$, so $F^{-1}(q)$ has a single point. If $(P,Q)(q)=(\tfrac43,\tfrac4{27})$, then \cref{eq:cubic} has the triple root $s=\tfrac23$, at which $L=0$, and $F^{-1}(q)=\emptyset$. Along a path approaching a target on $\{\Delta=0\}$, every preimage whose root tends to a root with $L=0$ satisfies
\[
L\to0, \qquad x=c/L\to\infty ,
\]
and $e=s/L\to\infty$ as well when that root is nonzero: such preimages escape to infinity. No ramification occurs, consistent with \'etaleness of $F$; the ramification that a finite degree-three cover would require is converted entirely into non-properness.
\end{enumerate}
\end{prop}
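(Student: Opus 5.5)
Everything will be read off the parametrisation of \cref{prop:identities} in the coordinates $(s,L)$ on $\A^2_{u,v}$. By the proof of \cref{prop:degree3} these are coordinates only away from $e=0$, since $\C(u,v)=\C(s,L)$ birationally. For (1), \cref{eq:PQ} expresses $P$ and $Q$ with a factor $L$, namely $P=L(u+3M)$ and $Q=L^2(1+u)M$. Both vanish identically on $\{L=0\}$, so this line maps to $(0,0)$.

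For (2) I use the identity $P=4s+2L-3s^2$ of \cref{prop:identities}(1), read as $L(s)=\tfrac12(P-4s+3s^2)$. Let $C(s)=s^3-2s^2+Ps-2Q$. Then $C'(s)=3s^2-4s+P=2L(s)$, so a root $s_0$ is multiple exactly when $C'(s_0)=0$, that is, when $L(s_0)=0$. To parametrise $\{\Delta=0\}$, impose $C(s)=C'(s)=0$. From $C'=0$ we get $P=4s-3s^2$, and substituting into $C=0$ gives
\[
2Q=s^3-2s^2+4s^2-3s^3=2s^2-2s^3,
\]
so $Q=s^2-s^3$. This exhibits $\{\Delta=0\}$ as the image of an injective polynomial parametrisation. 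Injectivity holds because $s$ is the double root, which is determined by $(P,Q)$ unless the root is triple, and a triple root is unique anyway. The derivative of the parametrisation is $(4-6s,\,2s-3s^2)=(2-3s)(2,s)$, which vanishes only at $s=\tfrac23$, giving the point $(\tfrac43,\tfrac4{27})$. A local expansion there shows a $(2,3)$ cusp, and there are no nodes because the parametrisation is injective. At this point one also checks that the double root has multiplicity three exactly when $C''=6s-4=0$, again $s=\tfrac23$.

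For (3), fix $c\neq0$. Points of $F^{-1}(q)$ correspond bijectively to roots $s$ of \cref{eq:cubic} with $L(s)\neq0$, via the recovery formulas of \cref{prop:identities}(3). Conversely, each preimage gives such a root, since $x=c/L$ requires $L\neq0$. The delicate point is that the recovery is genuinely two-sided. The map $(x,y,z)\mapsto(s,L,c)$ must be checked to be injective on $\{c\neq0\}$, and every root with $L\neq0$ must give a point with $x\neq0$ that maps back to $q$. I expect this bookkeeping to be the main obstacle: one has to verify via \cref{eq:M} and \cref{eq:PQ} that the reconstructed $(u,v)$ reproduce $(P,Q)$, and $a,b$ follow from $a=Q/c^2$, $b=P/c$. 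The count is then immediate. Off $\Delta$ there are three simple roots, each with $L\neq0$ by (2), giving three preimages. On $\Delta$ away from the cusp there is a double root with $L=0$ and a simple root with $L\neq0$, giving one preimage. At the cusp there is a triple root with $L=0$, giving none.

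Finally, along a path whose target tends to a point of $\{\Delta=0\}$, the roots vary continuously. A root tending to a multiple root has $L\to0$, so $x=c/L\to\infty$, and if the limit root is nonzero then $e=s/L\to\infty$ as well. The closing remark needs no further argument: $\detJ F$ is constant, so $F$ is étale and unramified, and the missing preimages are accounted for by this escape to infinity.
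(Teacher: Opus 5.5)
Your proposal is correct and follows the paper's proof in all three parts: the factor $L$ in $P$ and $Q$ for (1), the identity $C'(s)=2L(s)$ followed by elimination of $P$ for (2), and the count of roots with $L\neq0$ through the recovery formulas of \cref{prop:identities}(3) for (3). Your extra checks make explicit what the paper leaves implicit: injectivity of the parametrisation (so no nodes), the criterion $C''=6s-4$, and the two-sided recovery. That last check is immediate, since the reconstructed point has $Le=s$, so \cref{prop:identities}(1) returns $P$ and, via the cubic, $Q$.

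One slip, which you never use: $(s,L)$ fail to be coordinates along $\{L=0\}$, not $\{e=0\}$. Indeed $\partial(s,L)/\partial(u,v)=-L$, and that line collapses to $(s,L)=(0,0)$.
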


\begin{proof}
(1) On $\{L=0\}$ we have $s=Le=0$, hence
\[
P=4s+2L-3s^2=0 \qquad \text{ and } \qquad Q=s^2+sL-s^3=0.
\]
(2) If $s_0$ is a multiple root then $3s_0^2-4s_0+P=0$, i.e.\ $P=4s_0-3s_0^2$, whence $L(s_0)=0$; conversely if $s_0$ is a root with $L(s_0)=0$ then $P=4s_0-3s_0^2$, so
\[
\tfrac{d}{ds}\bigl(s^3-2s^2+Ps-2Q\bigr)\big|_{s_0}=3s_0^2-4s_0+P=0 ,
\]
and $s_0$ is multiple; substituting into \cref{eq:cubic} gives $Q=s_0^2-s_0^3$. The parametrization follows, and its derivative $(4-6s,\,s(2-3s))$ vanishes only at $s=\tfrac23$, where $(P,Q)=(\tfrac43,\tfrac4{27})$.

(3) A preimage of $q$ requires $L\neq0$, since $c=xL\neq0$, and by \cref{prop:identities}(3) each root with $L\neq0$ gives exactly one preimage; by (2) the roots with $L=0$ are exactly the multiple ones. If $\Delta\neq0$ all three roots are simple, hence all have $L\neq0$. Suppose $\Delta=0$. The cubic has a triple root $s_0$ only if $3s_0=2$, and then $P=3s_0^2=\tfrac43$ and $2Q=s_0^3=\tfrac8{27}$; this is the cusp of (2), and there all three roots equal $\tfrac23$, where $L=0$, so the fiber is empty. Away from the cusp the cubic has one double root, with $L=0$, and one simple root, with $L\neq0$. The remaining assertions follow from the recovery formulas of \cref{prop:identities}(3), the second one because $e=s/L$ tends to infinity when $s$ tends to a nonzero limit and $L\to0$.
\end{proof}

In particular $F$ is not surjective, and what it misses is a single $\tau$-orbit,
\[
  \bigl\{\bigl(\tfrac4{27}t^{-2},\;\tfrac43t^{-1},\;t\bigr):t\in\C^{\times}\bigr\} .
\]
This is a second failure of bijectivity over $\C$, alongside the generic three-to-one behaviour, and it is invisible on the quotient: the cusp is an ordinary point of $\A^2_{P,Q}$, and $\Fbar$ does hit it, along $\{L=0\}$ only in the limit.

The hypothesis $c\neq0$ in \cref{prop:disc}(3) is essential: the whole plane $\{c=0\}$ maps to the single point $(P,Q)=(0,0)$, because $P=bc$ and $Q=ac^2$ vanish identically there, so the invariants fail to separate $\tau$-orbits on $\{c=0\}$ and the quotient picture is not a faithful guide. The fibers there are computed directly.

\begin{prop}\label{prop:cfibres}
Let $(\alpha,\beta,0)\in\C^3$. Then $F^{-1}(\alpha,\beta,0)$ consists of the point $(0,\beta,\alpha-4\beta^2)$, lying on $\{x=0\}$, together with the preimages at which $L$ vanishes, determined as follows. If $\beta\neq0$, they are given by the roots of
\begin{equation}\label{eq:cquad}
(16\alpha-\beta^2)\,u^2+(48\alpha-3\beta^2)\,u+(36\alpha-2\beta^2)=0,\qquad x=\frac{4u+6}{\beta},
\end{equation}
and if $\beta=0$, by $u=-\tfrac32$ together with $x^2=-\tfrac1{4\alpha}$. Consequently
\[
\#F^{-1}(\alpha,\beta,0)=\begin{cases}1,&\beta^2=16\alpha,\\ 3,&\text{otherwise.}\end{cases}
\]
\end{prop}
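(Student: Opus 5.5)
The proof splits the fibre according to the factorisation $c=xL$, which was already used in \cref{sec-4}. A point $(x,y,z)$ with $F(x,y,z)=(\alpha,\beta,0)$ must have $x=0$ or $L=0$, and the plan is to treat these two branches separately and then count.

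\emph{The branch $x=0$.} Here $u=xy=0$ and $v=x^2z=0$, and \cref{eq:map} reduces to $a=z+4y^2$, $b=y$, $c=0$. So the only preimage with $x=0$ is $(0,\beta,\alpha-4\beta^2)$.

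\emph{The branch $L=0$, $x\neq0$.} Now $v=2-3u$, and \cref{eq:M} gives $M=2+u$. I would rewrite $a$ and $b$ in the invariants, dividing by the appropriate power of $x$ via $y=u/x$ and $z=v/x^2$, exactly as in the derivation of \cref{eq:PQ}. This gives
\[
b=\frac{u+3M}{x}=\frac{4u+6}{x},\qquad a=\frac{(1+u)M}{x^2}=\frac{(1+u)(2+u)}{x^2}.
\]
The point is then recovered from $(u,x)$ by $y=u/x$ and $z=(2-3u)/x^2$. Hence the preimages on this branch are in bijection with the pairs $(u,x)$, $x\neq0$, solving $\beta x=4u+6$ and $\alpha x^2=(1+u)(2+u)$.

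If $\beta\neq0$, then $x=(4u+6)/\beta$. Substituting into the second equation and clearing $\beta^2$ gives \cref{eq:cquad}, and the condition $x\neq0$ becomes $u\neq-\tfrac32$. If $\beta=0$, the first equation forces $u=-\tfrac32$, and the second becomes $x^2=-\tfrac1{4\alpha}$.

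\emph{The count.} This is the only step needing care: one has to check that no root is spurious and that no roots coincide. Three computations settle it.
\begin{itemize}
\item The discriminant of \cref{eq:cquad} is
\[
9(16\alpha-\beta^2)^2-4(16\alpha-\beta^2)(36\alpha-2\beta^2)=-\beta^2(16\alpha-\beta^2).
\]
\item Evaluating \cref{eq:cquad} at $u=-\tfrac32$ gives $\beta^2/4$, which is nonzero when $\beta\neq0$. So no root produces $x=0$.
\item When $16\alpha=\beta^2$, the $u^2$ and $u$ coefficients vanish and the constant term is $\beta^2/4$.
\end{itemize}

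Consequently, for $\beta\neq0$ and $\beta^2\neq16\alpha$ the quadratic is genuine (its leading coefficient is nonzero) and has two distinct admissible roots, giving two preimages. For $\beta\neq0$ and $\beta^2=16\alpha$ the equation is a nonzero constant, so there are no roots. For $\beta=0$ there are two values of $x$ when $\alpha\neq0$ and none when $\alpha=0$; note that $\beta=\alpha=0$ is exactly the case $\beta^2=16\alpha$.

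Finally, the $x=0$ point is distinct from all $L=0$ points, and distinct pairs $(u,x)$ give distinct points. Adding the single preimage from the first branch therefore yields $\#F^{-1}(\alpha,\beta,0)=1$ when $\beta^2=16\alpha$ and $3$ otherwise.
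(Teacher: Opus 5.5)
Your proposal is correct and follows the paper's proof essentially step for step: the split along $c=xL$, the restriction $(y,z)\mapsto(z+4y^2,y,0)$ on $\{x=0\}$, the weight-zero identities $bx=4u+6$ and $ax^2=(1+u)(2+u)$ on $\{L=0\}$, and the same three checks (the discriminant $\beta^2(\beta^2-16\alpha)$, the value $\beta^2/4$ at $u=-\tfrac32$, and the degenerate case $\beta^2=16\alpha$). The paper only says that each root determines a single point, whereas you state outright that preimages on the second branch correspond bijectively to pairs $(u,x)$ with $x\neq0$; the argument is otherwise the same.
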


\begin{proof}
Since $c=xL$, a preimage has $x=0$ or $L=0$. On $\{x=0\}$ the map restricts to
\[
(y,z)\mapsto(z+4y^2,\,y,\,0),
\]
a bijection onto $\{c=0\}$, giving the point $(0,\beta,\alpha-4\beta^2)$. At a preimage with $L=0$ we have $x\neq0$, and $v=2-3u$ with $M=2+u$ by \cref{eq:M}, so the invariants $bx$ and $ax^2$, of weight zero, are
\[
bx=u+3M=4u+6,\qquad ax^2=(1+u)M=(1+u)(2+u).
\]
Suppose $\beta\neq0$. Then $x=(4u+6)/\beta$, and $\alpha x^2=(1+u)(2+u)$ gives \cref{eq:cquad}; its discriminant is
\[
(48\alpha-3\beta^2)^2-4(16\alpha-\beta^2)(36\alpha-2\beta^2)=\beta^2\bigl(\beta^2-16\alpha\bigr).
\]
If $\beta^2\neq16\alpha$ there are two distinct roots, and neither is $u=-\tfrac32$: substituting $u=-\tfrac32$ into the left side of \cref{eq:cquad} gives
\[
\tfrac94(16\alpha-\beta^2)-\tfrac32(48\alpha-3\beta^2)+(36\alpha-2\beta^2)=\tfrac14\beta^2\neq0 ,
\]
the terms in $\alpha$ cancelling. Hence each root determines $x\neq0$, and with it $y=u/x$ and $z=v/x^2$, so it determines a single point; distinct roots give distinct points, since $u=xy$ is a function of the point. If instead $\beta^2=16\alpha$, then $\alpha=\beta^2/16\neq0$, the coefficients of $u^2$ and of $u$ in \cref{eq:cquad} both vanish, and the equation degenerates to $4\alpha=0$, which has no solution.

Suppose $\beta=0$. Then $bx=0$ forces $u=-\tfrac32$, whence
\[
ax^2=\bigl(-\tfrac12\bigr)\bigl(\tfrac12\bigr)=-\tfrac14 \qquad \text{ and } \qquad x^2=-\tfrac1{4\alpha},
\]
giving two points when $\alpha\neq0$, distinguished by the sign of $x$, and none when $\alpha=0$. In every case the points with $L=0$ have $x\neq0$, so they are distinct from the point on $\{x=0\}$, and the stated cardinalities follow, the exceptional case $\beta^2=16\alpha$ covering both $\beta\neq0$ with $\alpha=\beta^2/16$ and $\beta=\alpha=0$.
\end{proof}

The announced collision sits over $(-\tfrac14,0,0)$, whose fiber consists of the three rational points
\[
(0,0,-\tfrac14),\qquad (1,-\tfrac32,\tfrac{13}2),\qquad (-1,\tfrac32,\tfrac{13}2).
\]
The third is not independent data. Since $b=c=0$ there, the target point is fixed by $\tau_{-1}$, so its fiber is stable under
\[
\sigma_{-1}(x,y,z)=(-x,-y,z),
\]
and the last two points form a free $\muu{2}$-orbit while the first is $\muu{2}$-fixed: the grading predicts the third point from the second. Both points of the free orbit have $(u,v)=(-\tfrac32,\tfrac{13}2)$ and lie on $\{L=0\}$, while the first has $(u,v)=(0,0)$, $L=2$ and $s=2$, the simple root of $s^2(s-2)=0$. By \cref{prop:cfibres} the fiber is not degenerate in cardinality; the drops described in \cref{prop:disc}(3) occur over targets with $c\neq0$, and within $\{c=0\}$ the cardinality falls only along the curve $\beta^2=16\alpha$.

\begin{rem}[A Kummer condition on the stacky line]\label{rem:kummer}
The action $\tau$ has stabilizer $\muu{2}$ exactly along the punctured line $\{b=c=0,\ a\neq0\}$, and $\sigma$ has stabilizer $\muu{2}$ along $\{x=y=0,\ z\neq0\}$. By \cref{prop:cfibres}, for $\alpha\in\Q^{\times}$ the two preimages of $(\alpha,0,0)$ lying on $\{L=0\}$ have $x^2=-\tfrac1{4\alpha}$, so
\[
\#F^{-1}(\alpha,0,0)(\Q)=\begin{cases}3,&-\alpha\in(\Q^{\times})^{2},\\ 1,&\text{otherwise,}\end{cases}
\]
the point $(0,0,\alpha)$ occurring in either case. Liftability of a rational point along the $\muu{2}$-fixed line is thus governed by a square class, an obstruction of exactly the Kummer type that governs the positive-weight setting~\cite{sparsity}. The announced collision is the member $\alpha=-\tfrac14$ of this family, and $\alpha=-1$, with preimages $(\pm\tfrac12,\mp3,26)$, is the next.
\end{rem}

\begin{rem}
The three roots of \cref{eq:cubic} always sum to $2=|\detJ {F}|$, the same constant produced by the anti-diagonal linear part in \cref{lem:grading}; equivalently, the coefficient of $s^2$ is $-2$. This is a statement about the roots and not about the fiber: on the discriminant the double root $s_0$ has $L(s_0)=0$ and is not a preimage, so the surviving point carries $s=2-2s_0$, which equals $2$ only when $s_0=0$. The same constant appears once more as the value $\kappa=2$ of \cref{thm:master}. Whether the $s^2$-coefficient of the fiber cubic equals $\kappa$ for every Keller map graded by this weight type, or whether these coincidences are accidents of \cref{eq:map}, we do not know.
\end{rem}

\section{Arithmetic: the image on rational points is thin}\label{sec:6}

Everything above is defined over $\Q$. If $(x,y,z)\in\Q^3$, then $s=Le\in\Q$; hence a point $(a,b,c)\in\Q^3$ with $c\neq0$ has a rational preimage if and only if the cubic \cref{eq:cubic} has a rational root with $L\neq0$.

\begin{cor}\label{cor:thin}
$F(\Q^3)$ is a thin set in the sense of Serre \cite{serre}. By Hilbert irreducibility, outside a thin set of rational points $(a,b,c)\in\Q^3$ the fiber $F^{-1}(a,b,c)$ contains no rational point.
\end{cor}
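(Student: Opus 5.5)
The plan is to realise $F(\Q^3)$ as the image of rational points under a dominant morphism of degree $3>1$ between geometrically irreducible varieties over $\Q$, and then apply the standard fact from \cite{serre} that such an image is thin (of type~2). Concretely, \cref{prop:degree3} shows that $F\colon\A^3_\Q\to\A^3_\Q$ is dominant, since $a,b,c$ are algebraically independent, and generically finite of degree $[\Omega:K]=3$. The source is geometrically irreducible, and by \cref{thm:main} the geometric monodromy is $S_3$, so the cover has no degree-one subcover. I would handle the complement of a dense open set separately: a proper closed subset of $\A^3$ is thin of type~1, and finite unions of thin sets are thin. So I will restrict to the open set $U=\{c\neq0,\ \Delta(P,Q)\neq0\}$, over which \cref{prop:disc}(3) shows that $F$ is finite étale of degree $3$, and decompose
\[
F(\Q^3)\subseteq F(F^{-1}(U)(\Q))\cup\bigl(\A^3\setminus U\bigr)(\Q).
\]

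For the first piece I will use the remark preceding the corollary. A point $q\in U(\Q)$ lies in $F(\Q^3)$ if and only if the cubic \cref{eq:cubic}, with coefficients $P=bc$ and $Q=ac^2$ in $\Q$, has a rational root, and such a root automatically has $L\neq0$ on $U$. So the image is contained in the set of $q\in\A^3(\Q)$ at which the specialisation of the irreducible cubic \cref{eq:cubic} over $\Q(a,b,c)$ acquires a root in $\Q$. This set is thin of type~2 by definition: it is the image of $X(\Q)$, where $X=\{(a,b,c,s): s^3-2s^2+bcs-2ac^2=0\}$ is irreducible, because the cubic is irreducible over $K$ by \cref{prop:degree3}(3) and in particular over $\Q(a,b,c)$. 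The projection $X\to\A^3$ is dominant of degree $3\ge2$. This gives the first statement.

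The second statement is then Hilbert irreducibility in Serre's form. Since $\Q$ is Hilbertian, $\A^3(\Q)$ is not thin, so the complement of the thin set just constructed is nonempty, indeed dense. Outside that set, the specialised cubic is irreducible over $\Q$; with $c\neq0$ it has no rational root, and so the fiber has no rational point. The statement as phrased, ``outside a thin set,'' is exactly this.

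The only step needing care is irreducibility of $X$ over $\Q$, as opposed to over $\C$. An irreducible factor over $\C(a,b,c)$ of a polynomial with $\Q$-coefficients rules out factorisation over $\Q(a,b,c)$. This descent direction is the easy one, since a factorisation over the smaller field is one over the larger, as already noted in \cref{prop:degree3}(3). So I expect no real obstacle. The main point to state explicitly is that thinness over $\Q$ needs geometric irreducibility of the cover only in the weak sense of degree $>1$ for an irreducible $X$, and that is provided.
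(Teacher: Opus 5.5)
Your proof is correct and is essentially the paper's argument: the paper applies Serre's type-2 definition directly to $F\colon\A^3\to\A^3$, which is dominant of degree $3$ with irreducible source by \cref{prop:degree3} (this is your opening sentence), and then cites Hilbert irreducibility for the cubic \cref{eq:cubic}. Your detour through $U=\{c\neq0,\ \Delta\neq0\}$ and the hypersurface $X$ is valid but not needed: the cubic relation holds identically in $(x,y,z)$, so every rational point of $\A^3$ already gives a rational root $s$, and the image of $\A^3(\Q)$ lies in the image of $X(\Q)$ without removing any closed set.
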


\begin{proof}
The source $\C^3$ is irreducible and $F$ is dominant of degree $3$ by \cref{prop:degree3}, so $F(\Q^3)$ is the image of the rational points of an irreducible variety under a generically finite map of degree greater than one; such images are thin by definition \cite[\S3.1]{serre}. The second statement is Hilbert's irreducibility theorem applied to the cubic \cref{eq:cubic}, which is irreducible over $\C(a,b,c)$ by \cref{prop:degree3}(3).
\end{proof}

Over $\C$ almost every point of the target has three preimages; over $\Q$ almost none has any. The counterexample therefore fails bijectivity on $\Q$-points in both directions at once: rational collisions, and a thin image. It is instructive to compare with the lifting problem for the Veronese morphism $\varphi\colon\WP^n_{\bq}\to\bP^n$ on weighted projective spaces~\cite{sparsity}.
There the weights are positive, $\varphi$ is finite and ramified, and liftability of a rational point is an abelian condition: a torsor under roots of unity, classified by Kummer classes and detected by congruences at every prime. Here the weights are hyperbolic, $F$ is \'etale and non-proper, and liftability is the splitting of a nonabelian $S_3$-resolvent, with no torsor structure and no congruence criterion --- except along the stabilizer locus, where \cref{rem:kummer} shows the abelian condition returns.

\section{Descent of the Keller condition}
\label{sec:7}

The identity $\detJ {F}=-2$ is equivalent to a two-dimensional statement about $\Fbar$ and the contracted line. The following holds for every equivariant map of this weight type, not only for \cref{eq:map}.

\begin{theorem}\label{thm:descent}
Let $G\colon\C^3\to\C^3$ be a dominant polynomial map equivariant for $\sigma,\tau$ as in \cref{eq:equiv}, with weights $(1,-1,-2)$ and degrees $(-2,-1,1)$. Write the third component as $G_3=x\,\Lambda(u,v)$ with $\Lambda$ invariant, and let $(P,Q)=(G_2G_3,\,G_1G_3^2)$ be the descended map on invariants. Then, up to a universal sign,
\[
\detJ{G} \;=\; \pm\,\frac{\Jacuv(P,Q)}{\Lambda^2}\, .
\]
In particular $G$ is a Keller map if and only if
\[
\Jacuv(P,Q)\;=\;\kappa\,\Lambda^2 \qquad\text{for some }\kappa\in\C^{\times},
\]
that is, if and only if the Jacobian of the quotient map vanishes exactly on the contracted locus $\{\Lambda=0\}$, to order two, with constant leading coefficient. For the map \cref{eq:map} we have $\Lambda=L$ and
\(
\Jacuv(P,Q)=2L^2 .
\)
\end{theorem}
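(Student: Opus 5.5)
The plan is to compute $\detJ{G}$ by the chain rule through the invariant coordinates, on the dense open set where $x\neq0$ and $G_3\neq0$, and then to extend the resulting identity of rational functions to all of $\C^3$. First I pin down the shape of the components. A monomial $x^iy^jz^\ell$ of weight $1$ has $i=1+j+2\ell\ge1$, and therefore equals $x\,u^jv^\ell$. So $G_3=x\Lambda(u,v)$ with $\Lambda\in\C[u,v]$, which justifies the notation of the statement. For the other two components I only need rational expressions on $\{x\neq0\}$. The function $x^2G_1$ has weight $0$, so it is an invariant rational function and lies in $\C(u,v)$ by the slice argument of \cref{prop:degree3}(2); likewise for $xG_2$. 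Write
\[
G_1=A(u,v)/x^2,\qquad G_2=B(u,v)/x .
\]
Then $P=G_2G_3=B\Lambda$ and $Q=G_1G_3^2=A\Lambda^2$ depend only on $(u,v)$. Dominance of $G$ forces $\Lambda\not\equiv0$, since otherwise $G_3=0$.

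The key step computes the Jacobian of the composite $(x,y,z)\mapsto(P,Q,G_3)$ in two ways.

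\emph{Through the target.} Consider the map $\Phi\colon(a,b,c)\mapsto(P,Q,c)=(bc,ac^2,c)$. Its Jacobian matrix has rows $(0,c,b)$, $(c^2,0,2ac)$ and $(0,0,1)$, so $\det J\Phi=-c^3$. By the chain rule,
\[
\det\frac{\partial(P,Q,G_3)}{\partial(x,y,z)}=-G_3^3\,\detJ{G}.
\]

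\emph{Through the source.} Factor through the coordinate change $(x,y,z)\mapsto(x,u,v)$. This is triangular with $\partial u/\partial y=x$ and $\partial v/\partial z=x^2$, so its determinant is $x^3$. In the coordinates $(x,u,v)$ the functions $P,Q$ do not involve $x$, while $G_3=x\Lambda$ has $\partial G_3/\partial x=\Lambda$. Hence the $3\times3$ matrix is block triangular and
\[
\det\frac{\partial(P,Q,G_3)}{\partial(x,u,v)}=\pm\Lambda\,\Jacuv(P,Q).
\]

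Equating the two expressions gives
\[
-x^3\Lambda^3\,\detJ{G}=\pm x^3\Lambda\,\Jacuv(P,Q),
\]
that is, $\detJ{G}=\pm\Jacuv(P,Q)/\Lambda^2$. The sign is universal because it comes only from the orderings of rows and columns, not from $G$. This is an identity of rational functions on a dense open set, hence an identity in $\C(x,y,z)$. The Keller criterion follows at once: $\detJ{G}=\kappa$ holds if and only if $\Jacuv(P,Q)=\pm\kappa\Lambda^2$, with the sign absorbed into $\kappa$.

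For the map \cref{eq:map} we have $\Lambda=L$, because $c=xL$, and $\detJ{F}=-2$. So the identity yields $\Jacuv(P,Q)=\pm2L^2$. To fix the sign I would check one point directly using \cref{eq:PQ}. At $(u,v)=(0,0)$ we have $L=2$ and $M=0$, and the derivatives give $\Jacuv(P,Q)=8=2L^2$.

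The main obstacle is bookkeeping rather than any conceptual difficulty. One must make sure that $A$ and $B$ are genuinely functions of $(u,v)$ alone; this is what the weight computation and the slice argument provide. One must also track the sign conventions carefully enough to state that the sign is universal. The passage from the open set $\{x\,G_3\neq0\}$ to all of $\C^3$ needs no separate argument, since both sides of the identity are rational functions.
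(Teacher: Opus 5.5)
Your proof is correct and follows the paper's argument: on the dense open set $\{x\,G_3\neq0\}$ you pass to coordinates $(x,u,v)$ on the source and $(c,P,Q)$ on the target (your map $\Phi$), where the Jacobian is block triangular, and combine the factors $x^3$ and $c^{-3}=(x\Lambda)^{-3}$ via the chain rule. The only divergence is for the map \cref{eq:map}: you deduce $\Jacuv(P,Q)=2L^2$ from the known value $\detJ{F}=-2$ and fix the sign by evaluating at the origin, where indeed $\Jacuv(P,Q)=8=2L^2$; the paper instead computes this Jacobian directly through the coordinates $(s,L)$ and $(e,L)$, which verifies the Keller property of \cref{eq:map} independently rather than taking it as input.
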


\begin{proof}
The component $G_3$ has weight $1$, and every weight-one rational function on $\{x\neq0\}$ is $x$ times an invariant, so $G_3=x\Lambda$ with $\Lambda\in\C[u,v]$; likewise $P,Q$ are invariant. On $\{x\neq0\}$ use coordinates $(x,u,v)$, and on $\{G_3\neq0\}$ target coordinates $(c,P,Q)$, related to $(a,b,c)$ by $a=Q/c^{2}$, $b=P/c$. In these coordinates $G$ is
\[
(x,u,v)\mapsto\bigl(x\Lambda(u,v),\,P(u,v),\,Q(u,v)\bigr),
\]
with Jacobian $\Lambda\cdot\Jacuv(P,Q)$. The source change $(x,y,z)\to(x,u,v)$ has Jacobian $\pm x^{3}$ and the target change $(c,P,Q)\to(a,b,c)$ has Jacobian $\pm c^{-3}=\pm(x\Lambda)^{-3}$, whence
\[
\detJ{G}=\pm\,x^{3}\,(x\Lambda)^{-3}\,\Lambda\,\Jacuv(P,Q)=\pm\,\Lambda^{-2}\Jacuv(P,Q)
\]
on a dense open set, hence everywhere. The displayed value for \cref{eq:map} is a direct computation through the coordinates of \cref{prop:identities}: one finds
\[
\frac{\partial(P,Q)}{\partial(s,L)}=-2L, \qquad \frac{\partial(s,L)}{\partial(e,L)}=L, \qquad \frac{\partial(e,L)}{\partial(u,v)}=-1.
\]
\end{proof}

\begin{rem}\label{rem:recipe}
\cref{thm:descent} explains the cancellation \cref{eq:M}: it is the condition $\Jac=\kappa\Lambda^{2}$ made explicit. It also inverts into a recipe: to build an equivariant Keller map of this weight type, solve the two-dimensional problem $\Jacuv(P,Q)=\kappa\Lambda^{2}$ --- a Jacobian problem with prescribed divisorial vanishing along the contracted line --- and lift. More generally, it reduces the Jacobian problem for hyperbolically graded maps of $\C^{n}$ to a degenerate Jacobian problem on the $(n-1)$-dimensional quotient. We do not know whether the exponent $2$ admits a uniform description in terms of the weights; this is the first question the reduction raises.
\end{rem}


\section{The parameter variety of graded Keller maps}
\label{sec-8}

\Cref{thm:descent} reduces the construction of a graded Keller map to a problem on the invariant quotient. We make that reduction explicit for an arbitrary hyperbolic signature and observe that the resulting condition is \emph{trilinear} in its unknowns. The graded Keller maps of bounded degree are therefore the points of an explicit multiprojective variety defined over $\Z$, and the question of whether \cref{eq:map} is isolated becomes a question about the rational points of that variety. This is the form in which the two settings of \cref{sec:9} are most closely comparable: the arithmetic of the hyperbolic example is governed by a parameter space which is itself of the positive-weight type studied in \cite{sparsity}.

\subsection{Descent for an arbitrary signature}
Fix integers $r, s>0$. We have the action of $\Gm$ on $\C^3$ via \(\Gm \times \C^3 \to \C^3\) given by
\[
(t, (x_1, x_2, x_3) ) \mapsto \bigl(tx_1,\;t^{-r}x_2,\;t^{-s}x_3\bigr).
\]
For any $t\in\Gm$ denote by \( \sigt : \C^3 \to \C^3 \) the map given by
\[
  \sigt (x_1,x_2,x_3) = \bigl(tx_1,\;t^{-r}x_2,\;t^{-s}x_3\bigr).
\]
On the image $\C^3$ we have the action of $\Gm$ via \(\Gm \times \C^3 \to \C^3\) given by
\[
(t, (x_1, x_2, x_3) ) \mapsto \bigl(t^{-s}x_1,\;t^{-r}x_2,\;tx_3\bigr),
\]
that is the reversal degree type $(-s,-r,1)$ of \cref{lem:perm}. For any $t\in\Gm$ denote by \( \taut : \C^3 \to \C^3 \) the map given by
\[
  \taut(x_1,x_2,x_3) = \bigl(t^{-s}x_1,\;t^{-r}x_2,\;tx_3\bigr).
\]

By \cref{cor:keller-weights} the target weights of a graded Keller map are a permutation of the source weights, and by \cref{lem:perm} the permutation is determined once the $w_i$ are distinct. We fix the reversal here because it is the type of \cref{eq:map}, but nothing below depends on that choice. What the descent uses is only that exactly one coordinate on each side carries the positive weight $1$, and the computation is unchanged if that coordinate is given a different index. The identity type, in which the positive weight stays in the first position on both sides, occurs in \cref{rem:vasyunin} and is the setting of \cref{rem:exponent}.

 \begin{lem}
The elements $\sigt, \taut$ are conjugated in $\Aut(\C^3)$ by  
\[
  w=\begin{pmatrix} 0&0&1\\ 0&1&0\\ 1&0&0 \end{pmatrix} \in   \GL_3(\C).  
\]
In other words, 
\(
  \taut = w\circ\sigt\circ w^{-1}
  \)   for every \( t\in\Gm\).
\end{lem}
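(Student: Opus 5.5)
This is a direct verification on coordinates, using only that $w$ is the permutation matrix of the transposition $(1\,3)$. That matrix is an involution, so $w^{-1}=w$, and as a linear map it is
\[
w(x_1,x_2,x_3)=(x_3,x_2,x_1).
\]
The plan is to evaluate $w\circ\sigt\circ w^{-1}$ on an arbitrary point $(x_1,x_2,x_3)$ and compare the result with $\taut(x_1,x_2,x_3)$.

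Carrying out the composition step by step:
\[
w^{-1}(x_1,x_2,x_3)=(x_3,x_2,x_1),\qquad
\sigt(x_3,x_2,x_1)=\bigl(tx_3,\;t^{-r}x_2,\;t^{-s}x_1\bigr),
\]
and applying $w$ swaps the first and third entries, giving
\[
\bigl(t^{-s}x_1,\;t^{-r}x_2,\;tx_3\bigr)=\taut(x_1,x_2,x_3).
\]
This holds for every $t\in\Gm$ and every point, so $\taut=w\circ\sigt\circ w^{-1}$ as morphisms of $\A^3$. Since $w\in\GL_3(\C)\subseteq\Aut(\C^3)$, this is a conjugation inside $\Aut(\C^3)$.

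Equivalently, one can argue on matrices. We have $\sigt=\diag(t,t^{-r},t^{-s})$, and conjugating a diagonal matrix by a permutation matrix permutes its diagonal entries. Here the permutation is the reversal $1\leftrightarrow3$, which is the permutation $\pi$ of \cref{lem:perm} for these weights, so the result is $\diag(t^{-s},t^{-r},t)=\taut$.

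There is no real obstacle. The only care needed is the direction of the conjugation, that is, whether one writes $w\circ\sigt\circ w^{-1}$ or $w^{-1}\circ\sigt\circ w$. Because $w$ is an involution, both give the same result.
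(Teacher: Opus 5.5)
Your proof is correct and is essentially the paper's: the paper writes $\sigt=\diag(t,t^{-r},t^{-s})$ and $\taut=\diag(t^{-s},t^{-r},t)$, notes $w^{-1}=w$, and uses that conjugation by $w$ reverses the diagonal entries, which is your matrix version, and your pointwise computation is the same check written in coordinates. Your aside identifying the reversal with the permutation $\pi$ of \cref{lem:perm} is harmless but not needed, and that $\pi$ is unique only when $r\neq s$.
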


\begin{proof}
In the coordinates $x_1,x_2,x_3$ both actions are diagonal,
\[
  \sigt=\diag\bigl(t,\;t^{-r},\;t^{-s}\bigr), \qquad \taut=\diag\bigl(t^{-s},\;t^{-r},\;t\bigr).
\]
Since $w^2=\id$ we have $w^{-1}=w$, and for any $d_1,d_2,d_3$ one checks directly that
\[
  w\,\diag(d_1,d_2,d_3)\,w^{-1}=\diag(d_3,d_2,d_1).
\]
Applying this with $(d_1,d_2,d_3)=(t,t^{-r},t^{-s})$ gives $w\,\sigt\,w^{-1}=\taut$.
\end{proof}


\subsubsection{Action on the polynomial ring $\C[x,y,z]$}

Grade $\C[x,y,z]$ by the weight defined in \cref{sec-2}, extended to general $r,s$: the monomial $x^iy^jz^{\ell}$ has weight $i-rj-s\ell$. The action $\sigma$ extends to $\C[x,y,z]$ by $\sigt^{*}f=f\circ\sigt$, which is an action since $\Gm$ is commutative, and then $\sigt^{*}$ multiplies a monomial by $t^{\,i-rj-s\ell}$, that is by $t$ to its weight. We write $\C[x,y,z]^{\sigma}$ for the ring of polynomials invariant under $\sigt^{*}$ for every $t$, that is the polynomials of weight $0$. A monomial $x^i y^j z^{\ell}$ is invariant if and only if $i=r j+s \ell$, so
\[
  \C[x,y,z]^{\sigma}=\C[u,v],\qquad u=x^{r}y,\quad v=x^{s}z,
\]
freely, for every $r,s$.

In the same way $\tau$ extends to $\C[a,b,c]$ by $\taut^{*}f=f\circ\taut$, the monomial $a^ib^jc^{\ell}$ has weight $\ell-si-rj$, and we write $\C[a,b,c]^{\tau}$ for the ring of polynomials invariant under $\taut^{*}$ for every $t$. A monomial $a^i b^j c^{\ell}$ is invariant if and only if $\ell=s i+r j$, so
\[
  \C[a,b,c]^{\tau}=\C[P,Q],\qquad P=b\,c^{\,r},\quad Q=a\,c^{\,s},
\]
freely. For $(r,s)=(1,2)$ these are the invariants of \cref{sec:7}.

A polynomial map $G=(G_1,G_2,G_3)$ is equivariant for $(\sigma,\tau)$ exactly when each component is weight-homogeneous of the weight of the corresponding image coordinate, that is $G_1$ of weight $-s$, $G_2$ of weight $-r$, and $G_3$ of weight $1$: this is the content of the equivariance $G\circ\sigt=\taut\circ G$, read one coordinate at a time.

We describe the three graded pieces. A monomial of weight $1$ has $x$-exponent $i=1+rj+s\ell\ge1$, so it is $x$ times an invariant, and the weight-$1$ piece is $x\,\C[u,v]$. For the other two, write the monomials of $\C[u,v]$ as $u^iv^j=x^{ri+sj}y^iz^j$. Then $x^{-r}u^iv^j$ is a polynomial exactly when $ri+sj\ge r$, and $x^{-s}u^iv^j$ exactly when $ri+sj\ge s$, so the weight-$(-r)$ piece is $x^{-r}\VB$ and the weight-$(-s)$ piece is $x^{-s}\VA$, where
\[
  \VB:=\Span\bigl\{u^iv^j:\ ri+sj\ge r\bigr\},
  \qquad
  \VA:=\Span\bigl\{u^iv^j:\ ri+sj\ge s\bigr\}.
\]
Both are monomial ideals of $\C[u,v]$; for $(r,s)=(1,2)$ they are $(u,v)$ and $(u^2,v)$, the conditions appearing in \cref{sec:7}. Collecting the three pieces, an equivariant polynomial map $G\colon\C^3\to\C^3$ of this weight type is therefore exactly a triple
\begin{equation}\label{eq:triple}
  G=\bigl(x^{-s}A,\;x^{-r}B,\;x\Lambda\bigr),
  \qquad A\in\VA,\quad B\in\VB,\quad \Lambda\in\C[u,v],
\end{equation}
and the descended map on invariants is $\Gbar=(P,Q)=\bigl(B\Lambda^{r},\,A\Lambda^{s}\bigr)$.


\begin{prop}\label{prop:gendescent}
For $G$ as in \cref{eq:triple}, up to a universal sign,
\[
  \detJ{G}=\pm\,\frac{\Jacuv(P,Q)}{\Lambda^{\,r+s-1}} .
\]
\end{prop}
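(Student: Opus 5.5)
The plan is to repeat the change-of-coordinates argument from the proof of \cref{thm:descent}, now with general $r,s$. We work on the dense open set $\{x\neq0\}\cap\{G_3\neq0\}$ of the source. Since both sides of the identity are rational functions, it suffices to prove it there.

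\emph{Source coordinates.} On $\{x\neq0\}$ we use $(x,u,v)$ with $u=x^{r}y$ and $v=x^{s}z$. The matrix $\partial(x,u,v)/\partial(x,y,z)$ is triangular with diagonal $(1,x^{r},x^{s})$. Hence its determinant is $x^{r+s}$, and $dx\wedge du\wedge dv=x^{r+s}\,dx\wedge dy\wedge dz$.

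\emph{Target coordinates.} On $\{c\neq0\}$ we use $(c,P,Q)$ with $P=bc^{r}$ and $Q=ac^{s}$. The inverse change is $a=Qc^{-s}$, $b=Pc^{-r}$. The matrix $\partial(c,P,Q)/\partial(a,b,c)$ is again triangular up to ordering, with determinant $\pm c^{r+s}$. Hence $da\wedge db\wedge dc=\pm c^{-(r+s)}\,dc\wedge dP\wedge dQ$.

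\emph{The map in these coordinates.} By \cref{eq:triple}, $G$ reads
\[
(x,u,v)\longmapsto\bigl(x\Lambda(u,v),\,P(u,v),\,Q(u,v)\bigr).
\]
Its Jacobian matrix is block triangular: the first row is $(\Lambda,\,x\Lambda_u,\,x\Lambda_v)$, and the last two rows have zero in the $x$-column. So the Jacobian in these coordinates is $\Lambda\cdot\Jacuv(P,Q)$.

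\emph{Chain rule.} Combining the three pieces gives
\[
\detJ{G}=\pm\,x^{r+s}\,(x\Lambda)^{-(r+s)}\,\Lambda\,\Jacuv(P,Q)=\pm\,\Lambda^{-(r+s-1)}\Jacuv(P,Q).
\]
The powers of $x$ cancel because the single positive weight is $1$, so $c=x\Lambda$ carries exactly one factor of $x$. This is the identity on a dense open set, hence everywhere as rational functions.

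For $(r,s)=(1,2)$ the exponent is $2$, recovering \cref{thm:descent}. The sign depends only on the ordering of coordinates, so it is universal.

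\emph{Where care is needed.} The only step requiring care is the bookkeeping of the exponents of $x$ and $c$. In particular, the target determinant carries $c^{-(r+s)}$, not $c^{-3}$. One must also confirm that no hidden factor arises from $A\in\VA$ and $B\in\VB$. None does: the map is expressed purely through $P$, $Q$ and $\Lambda$, and $\{x\Lambda\neq0\}$ is dense whenever $\Lambda\neq0$. In the degenerate case $\Lambda=0$ we have $\detJ{G}=0$ and $P=Q=0$, so both sides vanish and the statement is trivial.
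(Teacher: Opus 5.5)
Your proof is correct and follows the paper's argument step for step: the same coordinate changes $(x,y,z)\mapsto(x,u,v)$ and $(a,b,c)\mapsto(c,P,Q)$, with determinants $\pm x^{r+s}$ and $\pm c^{r+s}$; the same block-triangular Jacobian $\Lambda\,\Jacuv(P,Q)$ of $(x,u,v)\mapsto(x\Lambda,P,Q)$; and the chain rule with $c=x\Lambda$ on a dense open set. One small correction to your closing aside: when $\Lambda=0$ the right-hand side is $0/0$, not $0$, so that degenerate case should be stated as the cleared identity $\Lambda^{r+s-1}\detJ{G}=\pm\Jacuv(P,Q)$, which then holds trivially.
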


\begin{proof}
On $\{x\neq0\}$ use coordinates $(x,u,v)$; the change of coordinates $(x,y,z)\mapsto(x,u,v)$ has Jacobian $\pm x^{r+s}$. On $\{c\neq0\}$ use $(c,P,Q)$; the change $(a,b,c)\mapsto(c,P,Q)$ has Jacobian $\pm c^{\,r+s}$. In these coordinates $G$ is 
\[
(x,u,v)\mapsto\bigl(x\Lambda(u,v),P(u,v),Q(u,v)\bigr),
\]
 with Jacobian $\Lambda\Jacuv(P,Q)$. The chain rule and $c=x\Lambda$ give
\[
  \detJ{G}=\pm\,\frac{x^{r+s}\,\Lambda\Jacuv(P,Q)}
                    {(x\Lambda)^{r+s}}
         =\pm\,\frac{\Jacuv(P,Q)}{\Lambda^{\,r+s-1}}
\]
on a dense open set, hence everywhere.
\end{proof}

\begin{theorem}\label{thm:master}
Let $A,B,\Lambda\in\C[u,v]$ and write $\jb{f}{g}=f_ug_v-f_vg_u$. Then
\[
  \Jacuv\bigl(B\Lambda^{r},A\Lambda^{s}\bigr)
  =\Lambda^{\,r+s-1}\Bigl[\Lambda\jb{B}{A}+sA\jb{B}{\Lambda}+rB\jb{\Lambda}{A}\Bigr].
\]
Consequently the map $G$ of \cref{eq:triple} is a Keller map if and only if
\begin{equation}\label{eq:master}
  \Lambda\jb{B}{A}\;+\;sA\jb{B}{\Lambda}\;+\;rB\jb{\Lambda}{A}\;=\;\kappa
  \qquad\text{for some }\kappa\in\C^{\times}.
\end{equation}
\end{theorem}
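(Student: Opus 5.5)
The identity is a direct computation with the bracket $\jb{f}{g}=f_ug_v-f_vg_u$. The only tools needed are that it is a biderivation and antisymmetric. That is, it is bilinear, satisfies $\jb{fg}{h}=f\jb{g}{h}+g\jb{f}{h}$ in each slot, and $\jb{f}{f}=0$.

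First I will expand $\Jacuv(B\Lambda^r,A\Lambda^s)=\jb{B\Lambda^r}{A\Lambda^s}$ by the Leibniz rule in the first slot:
\[
\jb{B\Lambda^r}{A\Lambda^s}=\Lambda^r\jb{B}{A\Lambda^s}+rB\Lambda^{r-1}\jb{\Lambda}{A\Lambda^s}.
\]
Then I expand each term in the second slot:
\[
\jb{B}{A\Lambda^s}=\Lambda^s\jb{B}{A}+sA\Lambda^{s-1}\jb{B}{\Lambda},
\qquad
\jb{\Lambda}{A\Lambda^s}=\Lambda^s\jb{\Lambda}{A}+sA\Lambda^{s-1}\jb{\Lambda}{\Lambda}.
\]
The last bracket vanishes by antisymmetry, which is the only cancellation in the argument. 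Collecting terms gives
\[
\Lambda^{r+s}\jb{B}{A}+sA\Lambda^{r+s-1}\jb{B}{\Lambda}+rB\Lambda^{r+s-1}\jb{\Lambda}{A}.
\]
Factoring out $\Lambda^{r+s-1}$ yields the displayed identity.

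For the Keller criterion I will combine this with \cref{prop:gendescent}. That result gives $\detJ{G}=\pm\Jacuv(P,Q)/\Lambda^{r+s-1}$ with $(P,Q)=(B\Lambda^r,A\Lambda^s)$ from \cref{eq:triple}. Substituting the identity shows that $\detJ{G}$ equals $\pm$ the bracket expression on the left of \cref{eq:master}, as polynomials in $u,v$.

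The step to watch is the meaning of this equality. In \cref{prop:gendescent} it holds on the dense open set where $x\neq0$ and $\Lambda\neq0$, with the determinant pulled back along $(x,y,z)\mapsto(u,v)=(x^ry,x^sz)$. Both sides are polynomials, so equality on a dense open set gives equality everywhere. The pullback $\C[u,v]\to\C[x,y,z]$ is injective, since $u,v$ are algebraically independent. Hence $\detJ{G}$ is a nonzero constant if and only if the expression in \cref{eq:master} is a nonzero constant $\kappa$, with $\detJ{G}=\pm\kappa$.

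One degenerate case needs a remark. The proof of \cref{prop:gendescent} uses the coordinates $(c,P,Q)$, so it tacitly assumes $\Lambda\not\equiv0$. If $\Lambda\equiv0$, then $G_3=0$, so $\detJ{G}=0$; and the left side of \cref{eq:master} is also $0$. The equivalence therefore holds trivially in that case too.

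For $(r,s)=(1,2)$ the criterion becomes $\Jacuv(P,Q)=\kappa\Lambda^2$, which recovers \cref{thm:descent}. This is a useful consistency check on the sign and on the exponent $r+s-1$.
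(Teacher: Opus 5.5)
Your proof is correct and follows the paper's argument. The identity comes from the Leibniz rule for $\jb{\cdot}{\cdot}$ in each slot, with $\jb{\Lambda}{\Lambda}=0$ providing the only cancellation, and the Keller criterion comes from substituting it into \cref{prop:gendescent}; your remarks on the degenerate case $\Lambda\equiv0$ and on the injectivity of the pullback $\C[u,v]\to\C[x,y,z]$ make explicit two points the paper leaves implicit.
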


\begin{proof}
By the Leibniz rule in each slot, 
\[
\jb{\Lambda^{r}}{A\Lambda^{s}}=\Lambda^{s}\jb{\Lambda^{r}}{A}=r\Lambda^{r+s-1}\jb{\Lambda}{A}
\]
 and 
 \[
 \jb{B}{A\Lambda^{s}}=sA\Lambda^{s-1}\jb{B}{\Lambda}+\Lambda^{s}\jb{B}{A},
 \]
  whence
\[
  \jb{B\Lambda^{r}}{A\Lambda^{s}}
  =B\jb{\Lambda^{r}}{A\Lambda^{s}}+\Lambda^{r}\jb{B}{A\Lambda^{s}}
  =\Lambda^{r+s-1}\bigl[rB\jb{\Lambda}{A}+sA\jb{B}{\Lambda}+\Lambda\jb{B}{A}\bigr].
\]
The second statement follows from \cref{prop:gendescent}.
\end{proof}

For $(r,s)=(1,2)$, \cref{eq:master} reads 
\[
\Lambda\jb{B}{A}+2A\jb{B}{\Lambda}+B\jb{\Lambda}{A}=\kappa,
\]
 and for \cref{eq:map} one computes $\kappa=2$, recovering $\Jacuv(P,Q)=2L^{2}$ of \cref{thm:descent}.

\begin{rem}\label{rem:exponent}
\Cref{prop:gendescent} answers the question raised in \cref{rem:recipe}: the exponent is determined by the signature. Let $\wt(x,y_2,\dots,y_n)=(1,-r_2,\dots,-r_n)$ with $r_i>0$, let $u_i=x^{r_i}y_i$ be the invariants, and let the target weights be the identity permutation of the source weights, so that the coordinate of weight $1$ is again the first. Writing $G_1=x\Lambda$ and $P_i=G_iG_1^{r_i}=B_i\Lambda^{r_i}$ for $i\ge2$, the same computation gives
\[
  \detJ{G}=\pm\,\frac{\Jac_{u}(P_2,\dots,P_n)}{\Lambda^{\,m-1}},
  \qquad m=\sum_{i\ge2}r_i .
\]
The exponent is $m-1$, which is $2$ for the weights $\wt=(1,-1,-2)$; for $n=3$ one has $m=r+s$, in agreement with \cref{prop:gendescent}. That the positive weight equals $1$ is what makes the quotient again an affine space: already for weights $(2,-1,-1)$ the invariant ring is $\C[xy^{2},xyz,xz^{2}]$, a quadric cone, and the reduction takes place on a singular quotient.
\end{rem}


\subsection{The base point}

The conditions $A\in\VA$, $B\in\VB$ are not merely technical; they pin down a marked point.

\begin{lem}\label{lem:basept}
Let $G$ be as in \cref{eq:triple} and suppose \cref{eq:master} holds. Let $O=(0,0)$ be the origin of $\A^2_{u,v}$. Then
\[
  A(O)=B(O)=0,\qquad \Lambda(O)\neq0,\qquad dA\wedge dB\big|_{O}\neq0,
\]
and $\Gbar(O)=(0,0)$.
\end{lem}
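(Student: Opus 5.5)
Evaluate the master equation \cref{eq:master} at the origin $O$ and exploit the fact that $\VA$ and $\VB$ are monomial ideals not containing $1$.

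\emph{Vanishing of $A$ and $B$.} Since $r,s>0$, the monomial $u^0v^0$ has $ri+sj=0<r$ and $0<s$. Hence $1\notin\VB$ and $1\notin\VA$, and every element of these spans vanishes at $O$, which gives $A(O)=B(O)=0$.

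\emph{Reduction at $O$.} Substituting $A(O)=B(O)=0$ into \cref{eq:master} kills the terms $sA\jb{B}{\Lambda}$ and $rB\jb{\Lambda}{A}$. What remains is
\[
\Lambda(O)\,\jb{B}{A}(O)=\kappa\neq0 .
\]
So $\Lambda(O)\neq0$, and $\jb{B}{A}(O)=B_uA_v-B_vA_u$ at $O$ is nonzero, which is exactly $dA\wedge dB|_O\neq0$ (up to sign). Finally $\Gbar(O)=(B\Lambda^r,A\Lambda^s)(O)=(0,0)$ because $A(O)=B(O)=0$.

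\emph{Where the hypotheses enter.} There is no real obstacle. The only point needing care is that the condition $A\in\VA$, $B\in\VB$ excludes constant terms. A secondary remark, not needed for the statement, concerns the linear parts. If $s>r$ then $u\notin\VA$, so $dA|_O$ is a multiple of $dv$; the nondegeneracy then forces $A_v(O)\neq0$ and $B_u(O)\neq0$. This is consistent with the anti-diagonal linear part of \cref{lem:grading}(iii).
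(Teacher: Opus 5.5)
Your proof is correct and is essentially the paper's argument. The spanning monomials of $\VA$ and $\VB$ exclude the constant, so $A(O)=B(O)=0$ and $\Gbar(O)=(0,0)$, and evaluating \cref{eq:master} at $O$ leaves $\Lambda(O)\jb{B}{A}(O)=\kappa\neq0$. Your side remark for $s>r$ (that $A_v(O)\neq0$ and $B_u(O)\neq0$) is also correct; it is the $n=3$ part of the paper's later \cref{lem:initial}.
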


\begin{proof}
Every monomial $u^iv^j$ spanning $\VA$ satisfies $ri+sj\ge s>0$, and every one spanning $\VB$ satisfies $ri+sj\ge r>0$, so neither space contains a constant and both lie in the maximal ideal $(u,v)$. 
Hence $A(O)=B(O)=0$, and $\Gbar=(B\Lambda^{r},A\Lambda^{s})$ gives $\Gbar(O)=(0,0)$. Evaluating \cref{eq:master} at $O$, the terms $sA\jb{B}{\Lambda}$ and $rB\jb{\Lambda}{A}$ vanish because $A$ and $B$ do, and there remains
\[
  \Lambda(O)\,\jb{B}{A}(O)=\kappa\neq0 .
\]
Therefore $\Lambda(O)\neq0$ and $\jb{B}{A}(O)\neq0$; since $dA\wedge dB=-\jb{B}{A}\,du\wedge dv$, the last assertion follows.
\end{proof}

Thus $(A,B)$ is a pair of polynomials with a common zero at which it is \'etale, and $\Lambda$ does not vanish there. Conversely:

\begin{rem}\label{rem:normalize}
Let $r=1$ and let $A,B,\Lambda$ solve \cref{eq:master} with $A$ and $B$ having a common zero $p_0\in\A^2$. Then some polynomial automorphism of $\A^2$ carries the data into the normalised form of \cref{lem:basept}. Indeed \cref{eq:master} is an identity of polynomials, so it may be evaluated at $p_0$; since $A(p_0)=B(p_0)=0$ the last two terms vanish and
\[
  \Lambda(p_0)\,\jb{B}{A}(p_0)=\kappa\neq0 ,
\]
so $\jb{B}{A}(p_0)\neq0$ and the curves $\{A=0\}$ and $\{B=0\}$ are smooth at $p_0$ and meet transversally there. Translate $p_0$ to $O$. Since $dA|_{O}\neq0$, a linear change of coordinates makes $A_v(O)\neq0$, and it does not disturb $B(O)=0$. By the implicit function theorem $\{A=0\}$ is then the graph of a power series $v=f(u)$ with $f(0)=0$; replace $v$ by $v-\pi(u)$, where $\pi$ is the Taylor polynomial of $f$ of order $s$ at $O$. This is a polynomial automorphism of $\A^2$, and in the new coordinates $A(u,0)$ vanishes to order at least $s+1$, so $A\in(u^{s},v)=\VA$, while $B(O)=0$ gives $B\in(u,v)=\VB$. Finally, for any polynomial automorphism $\varphi$ of $\A^2$ one has
\[
  \jb{f\circ\varphi}{g\circ\varphi}=\bigl(\jb{f}{g}\circ\varphi\bigr)\,\det J\varphi ,
\]
so each of the three terms of \cref{eq:master} is multiplied by the same constant $\det J\varphi$, and the equation is preserved with $\kappa$ replaced by $\kappa\det J\varphi\in\C^{\times}$.
\end{rem}

\begin{theorem}\label{thm:autononinj}
Let $G$ be as in \cref{eq:triple}, satisfying \cref{eq:master}, with $\Lambda\notin\C$. Then $G$ is a counterexample to the Jacobian Conjecture, of degree $\deg G=\deg\Gbar\ge2$.
\end{theorem}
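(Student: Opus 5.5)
The plan is to get the Keller property from \cref{thm:master}, then show that $G$ cannot be an automorphism because its quotient map $\Gbar$ contracts the curve $\{\Lambda=0\}$. The degree statement comes from the field-degree argument of \cref{prop:degree3} together with Keller's birational theorem.

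\emph{Keller.} By \cref{thm:master}, \cref{eq:master} is exactly the Keller condition, so $\detJ{G}=\pm\kappa$ is a nonzero constant.

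\emph{Not an automorphism.} Suppose $G$ had a polynomial inverse $H$. Applying $H$ to both sides of $G\circ\sigt=\taut\circ G$ gives $H\circ\taut=\sigt\circ H$, exactly as at the end of the proof of \cref{thm:rigid}. Hence $H^{*}$ preserves weights. The comorphisms $G^{*}$ and $H^{*}$ are mutually inverse graded isomorphisms, so they restrict to mutually inverse isomorphisms of the weight-zero pieces, $\C[P,Q]\leftrightarrow\C[u,v]$. Consequently $\Gbar=(B\Lambda^{r},A\Lambda^{s})$ would be an automorphism of $\A^2$, and in particular injective.

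This is impossible. Since $\Lambda\notin\C$, the curve $\{\Lambda=0\}\subset\A^2_{u,v}$ is nonempty and infinite. On it both $P=B\Lambda^{r}$ and $Q=A\Lambda^{s}$ vanish, so $\Gbar$ contracts the whole curve to $(0,0)$. Hence $G$ is a Keller map that is not an automorphism.

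\emph{Degree.} I will repeat the argument of \cref{prop:degree3}(2) for general $(A,B,\Lambda)$.
\begin{itemize}
\item $\Gbar$ is dominant, since $\Jacuv(P,Q)=\kappa\Lambda^{r+s}\neq0$ by \cref{thm:master}.
\item The slice argument gives $\Omsig=\C(u,v)$ and $\C(a,b,c)^{\tau}=\C(P,Q)$.
\item Since $c=x\Lambda$ with $\Lambda\neq0$, we have $\Omega=\Omsig(c)$ with $c$ transcendental over $\Omsig$, by a transcendence-degree count.
\item We also have $K=\C(P,Q)(c)$, because $a=Q/c^{s}$ and $b=P/c^{r}$.
\end{itemize}
\Cref{lem:basechange} then gives $\deg G=[\Omega:K]=[\C(u,v):\C(P,Q)]=\deg\Gbar$.

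The remaining point, which I expect to be the main obstacle, is $\deg G\ge2$. Contraction of a curve alone does not rule out birationality, since birational maps can contract curves. So I will invoke the classical theorem of Keller: a Keller map that is birational, i.e.\ of field degree $1$, is a polynomial automorphism. If $\deg G=1$, that theorem would contradict the previous step, so $\deg G=\deg\Gbar\ge2$.

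Should a self-contained route be preferred, I would instead argue that a degree-one étale $G$ is injective on a dense open set. I would then use the contracted curve together with equivariance to produce distinct points of $\{x\neq0\}$ with equal image, but I expect citing Keller's theorem to be cleaner.
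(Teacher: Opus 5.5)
Your proof is correct and follows the paper's argument. The steps match: Keller from \cref{thm:master}; $\deg G=\deg\Gbar$ via the slice argument of \cref{prop:degree3} and \cref{lem:basechange}; a birational Keller map is an automorphism (you cite Keller's theorem, the paper reproves it with Zariski's main theorem); and equivariance of the inverse forces $\Gbar$ to be an automorphism of $\A^2$. The only variation is the final contradiction, which you draw from $\Gbar$ contracting the curve $\{\Lambda=0\}$ rather than from $\Jacuv(P,Q)=\kappa\Lambda^{r+s-1}$ being nonconstant. Note that the exponent is $r+s-1$, not $r+s$ as you wrote, which does not affect your dominance claim.
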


\begin{proof}
$G$ is Keller by \cref{thm:master}, hence dominant, and $\deg G=\deg\Gbar$ by the argument of \cref{prop:degree3}, which uses only $\Omega=\Omsig(c)$, $K=\Ktau(c)$ and \cref{lem:basechange}, and so applies verbatim to any $(r,s)$. Suppose $\deg G=1$. Then $G$ is birational and \'etale, hence an open immersion by Zariski's main theorem, and its image $U$ is isomorphic to $\C^{3}$, in particular affine; so $\C^3\setminus U$ is either empty or a hypersurface $V(h)$ with $h$ nonconstant, the complement of an affine open subset of a smooth variety being pure of codimension one. In the latter case $h$ is a nonconstant unit of $\Ost(U)$, contradicting $U\cong\C^{3}$. Hence $U=\C^3$ and $G$ is an automorphism.

Its inverse is again equivariant, by the argument in the last step of \cref{thm:rigid}, so $(G^{-1})^{*}$ preserves weights and therefore carries $\C[u,v]=\C[x,y,z]^{\sigma}$ into $\C[a,b,c]^{\tau}=\C[P,Q]$; this restriction is inverse to $\Gbar^{*}$, so $\Gbar$ is an automorphism of $\A^2$ and $\Jacuv(P,Q)$ is a nonzero constant. By \cref{thm:master} that constant equals $\kappa\Lambda^{\,r+s-1}$, and $r+s-1\ge1$, so $\Lambda$ is constant, against the hypothesis. Thus $\deg G\ge2$ and $G$ is not injective.
\end{proof}

In the graded setting there is therefore no separate injectivity condition to impose: non-injectivity is a consequence of \cref{eq:master} together with $\Lambda\notin\C$. The construction of graded counterexamples is the solution of one scalar equation.

\begin{rem}\label{rem:vasyunin-descent}
The map of \cref{rem:vasyunin} illustrates the role of $\Lambda$. There $\bq=\bw$, so the weight-one component is $G_1$ and the notation is that of \cref{rem:exponent}, with $(r_2,r_3,r_4,r_5)=(1,2,3,4)$ and $m=\sum_{i\ge2}r_i=10$. The invariants are $u_i=X_1^{r_i}X_i$, and $G_1=X_1$ gives $\Lambda=1$. Writing $P_i=G_iG_1^{r_i}=B_i\Lambda^{r_i}$ one finds
\[
  B_2=u_2+u_3,\quad B_3=u_3+u_4+\tfrac12u_2^{2},\quad
  B_4=u_4+u_5-u_2u_3,\quad B_5=u_5+\tfrac12u_3^{2},
\]
and a direct computation gives $\Jac_u(B_2,\dots,B_5)=1=\Lambda^{\,m-1}$, so $\detJ{G}=1$ as it must be. Here $\Lambda$ is constant, so the hypothesis of \cref{thm:autononinj} fails, and indeed $G$ is an automorphism. More generally, when $\Lambda$ is constant \cref{eq:master} reduces to $\Jac_u(B_2,\dots,B_n)=\kappa$, which is the Keller condition for $\Gbar$ alone: the graded problem in dimension $n$ becomes the ordinary Jacobian Conjecture in dimension $n-1$, with no contracted locus and no degeneracy. The contracted line of \cref{eq:map}, and not the hyperbolic signature, is what makes a counterexample possible.
\end{rem}

\subsection{The parameter variety}

Fix $r,s$ and a bound $\bd=(d_1,d_2,d_3)$, and set
\[
  \VAd=\VA\cap\C[u,v]_{\le d_1},\quad
  \VBd=\VB\cap\C[u,v]_{\le d_2},\quad
  \VLd=\C[u,v]_{\le d_3},
\]
all three $\Q$-rational linear subspaces. Let
\[
  \mu(A,B,\Lambda)=\Lambda\jb{B}{A}+sA\jb{B}{\Lambda}+rB\jb{\Lambda}{A}
  \;\in\;\C[u,v]_{\le D},
\]
and $D=d_1+d_2+d_3-2$.
Each term of $\mu$ is linear in each of $A$, $B$, $\Lambda$ separately: $\mu$ is trilinear. Writing 
\[
\mu=\sum_{i,j}\mu_{ij}(A,B,\Lambda)u^iv^j,
\]
 each $\mu_{ij}$ is a form of tridegree $(1,1,1)$ with coefficients in $\Z[r,s]$. We define
\[
  \Ypar\;\subseteq\;
  \bP\bigl(\VAd\bigr)\times\bP\bigl(\VBd\bigr)
  \times\bP\bigl(\VLd\bigr)
\]
to be the closed subscheme cut out by the $\binom{D+2}{2}-1$ multihomogeneous equations $\mu_{ij}=0$,   $1\le i+j\le D$, and $\Yparo$ to be the open subscheme on which $\mu_{00}\neq0$ and $\deg\Lambda\ge1$. Thus $\Ypar$ is a multilinear section of a product of projective spaces, defined over $\Z$.

\begin{theorem}\label{thm:points}
Let $k$ be a field of characteristic zero. There is a bijection between $\Yparo(k)$ and the set of Keller maps over $k$ of the form \cref{eq:triple}, graded with source weights $\bw=(1,-r,-s)$ and target weights $\bq=(-s,-r,1)$, which are not polynomial automorphisms, with $\deg A\le d_1$, $\deg B\le d_2$, $\deg\Lambda\le d_3$, taken modulo the scaling action $(A,B,\Lambda)\mapsto(\alpha A,\beta B,\gamma\Lambda)$ of $\Gm^{3}$.
\end{theorem}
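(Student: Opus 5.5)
The plan is to build the bijection through \cref{thm:master} and then check, one at a time, that the three defining conditions of $\Yparo$ correspond to the three conditions on the map. The first two correspondences are routine. The third, which matches "$\deg\Lambda\ge1$" with "not a polynomial automorphism", contains a step I cannot close with the tools of the paper, as explained in the last paragraph.

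\emph{The map $\Psi$.} A $k$-point of the multiprojective product is a triple $(A,B,\Lambda)$ with $A\in\VAd$, $B\in\VBd$, $\Lambda\in\VLd$, all nonzero and defined over $k$, taken up to the scaling action of $\Gm^{3}$. This is legitimate for field points, since $\bP(V)(k)=(V(k)\setminus0)/k^{\times}$. Because $\mu$ is trilinear, scaling replaces it by $\alpha\beta\gamma\,\mu$. Hence the closed conditions $\mu_{ij}=0$ for $i+j\ge1$ and the open condition $\mu_{00}\neq0$ are well defined on classes, and together they say exactly that $\mu=\kappa$ is a nonzero constant. By \cref{thm:master} this is the statement that the map $G$ of \cref{eq:triple} is Keller, with $\detJ{G}=\pm\kappa$. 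The reduction $\C\to k$ is harmless: \cref{prop:gendescent} and \cref{thm:master} are polynomial identities over $\Z[r,s]$. I define $\Psi(A,B,\Lambda)=G$.

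\emph{Well defined and bijective.} The class of $G$ under the same scaling is unchanged, and $G$ recovers $(A,B,\Lambda)$ uniquely from \cref{eq:triple} via $A=x^{s}G_1$, $B=x^{r}G_2$, $\Lambda=G_3/x$. This gives injectivity on classes. For surjectivity, \cref{eq:triple} shows that every graded map of the stated weight type has this shape. The Keller condition then forces $A,B,\Lambda\neq0$, since otherwise $\mu=0$, and the degree bounds are the bounds in the statement.

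\emph{Nonconstant $\Lambda$ gives non-automorphisms.} If $\deg\Lambda\ge1$, then \cref{thm:autononinj} shows that $G$ is not an automorphism over $\bar k$, hence not over $k$.

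\emph{Non-automorphisms give nonconstant $\Lambda$ (the main obstacle).} Conversely, suppose $G$ is Keller and not an automorphism; I must show $\deg\Lambda\ge1$. Equivalently: if $\Lambda=\lambda\in k^{\times}$, then $G$ is an automorphism.

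\begin{enumerate}
\item \emph{Reduction to the quotient.} With $\Lambda=\lambda$, \cref{eq:master} reduces to $\lambda\jb{B}{A}=\kappa$, so $\Gbar=(\lambda^{r}B,\lambda^{s}A)$ is a Keller map of $\A^2$.
\item \emph{Degree comparison.} The argument in \cref{thm:autononinj} gives $\deg G=\deg\Gbar$. In one direction: if $\Gbar$ is an automorphism, then $\deg G=1$, so $G$ is birational and \'etale, and the Zariski main theorem step of \cref{thm:autononinj} shows $G$ is an automorphism. Its inverse is defined over $k$ by uniqueness of the inverse and Galois descent.
\item \emph{Unresolved step.} It remains to show that $\Gbar$ is an automorphism. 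By \cref{rem:normalize}, essentially every plane Keller map with a zero arises as such a $\Gbar$. So this step is not a formal consequence of the earlier results: in general it is the two-dimensional Jacobian Conjecture. My first attempt would be to exploit $A\in\VA$, $B\in\VB$ together with bounded degree. Failing that, the honest statement is that the bijection holds with "not an automorphism" read as "$\Lambda$ nonconstant", unconditionally in the cases where the planar Jacobian Conjecture is known for $\deg\Gbar\le\max(d_1,d_2)$, for example $\deg\le100$ by the known low-degree cases.
\end{enumerate}

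I expect item 3 to be the real gap in any proof of the statement as worded.
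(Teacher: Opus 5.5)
Everything you actually prove coincides with the paper's proof. That is: you lift $k$-points of the product of projective spaces to triples over $k$, and use trilinearity to read $\mu_{ij}=0$ for $i+j\ge1$ together with $\mu_{00}\neq0$ as $\mu=\kappa\in k^{\times}$. You then invoke \cref{thm:master} for the Keller property, recover the triple from $G$, and apply \cref{thm:autononinj} over $\bar k$ to rule out a polynomial inverse when $\deg\Lambda\ge1$.

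The step you could not close is not closed in the paper either. Its converse is the single sentence ``Conversely such a map determines its triple''. That sentence yields a triple with $\mu=\kappa\neq0$ within the degree bounds, but it never checks the remaining open condition $\deg\Lambda\ge1$ that defines $\Yparo$. Your diagnosis is right, and no idea from the paper repairs it:
\begin{enumerate}
\item When $\Lambda=\lambda\in k^{\times}$, \cref{eq:master} is just the Keller condition for $\Gbar=(\lambda^{r}B,\lambda^{s}A)$.
\item $G$ is an automorphism exactly when $\Gbar$ is; this is your item~2 together with the second paragraph of the proof of \cref{thm:autononinj}.
\item For $r=1$, \cref{rem:normalize}, preceded by a translation of the target to create a common zero, carries every plane Keller map into this stratum for suitable $\bd$.
\end{enumerate}
So the statement as worded, for all $\bd$, implies the two-dimensional Jacobian Conjecture. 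The paper concedes this itself after \cref{prop:determinantal} (``searching it would be searching the Jacobian Conjecture in dimension two'') and in \cref{rem:vasyunin-descent}. That concession also leaves unjustified its claim, after \cref{prop:determinantal}, that the constant-$\Lambda$ stratum ``contains no counterexample''.

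What is actually proved is the version you propose. $\Yparo(k)$ is in natural bijection with the Keller maps of this form and these degree bounds having $\Lambda\notin k$, all of which are non-automorphisms by \cref{thm:autononinj}. This agrees with the set in the statement exactly when every Keller pair $(B,A)\in\VBd\times\VAd$ is a plane automorphism, e.g.\ in the low degrees where the planar conjecture is known, as you note. The ``only if'' half of the corollary that follows needs the same caveat.
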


\begin{proof}
A $k$-point of a projective space over a field has $k$-rational homogeneous coordinates, so a $k$-point of the triple product lifts to a triple $(A,B,\Lambda)$ with coefficients in $k$, uniquely up to $\Gm^{3}(k)$. The conditions $\mu_{ij}=0$ for $(i,j)\neq(0,0)$ say that $\mu$ is the constant $\kappa=\mu_{00}$, and $\mu_{00}\neq0$ says $\kappa\in k^{\times}$; the triple then defines a Keller map by \cref{thm:master}, polynomial by the conditions $A\in\VA$, $B\in\VB$, and graded with the stated weights because \cref{eq:triple} is the description of that graded piece. It is not a polynomial automorphism: the degree $\deg G=[\Omega:K]$ is unchanged by base change to $\bar k$, so \cref{thm:autononinj} applies there and gives $\deg G\ge2$, which forbids a polynomial inverse over $k$. Conversely such a map determines its triple. Under $(\alpha,\beta,\gamma)$ one has $\kappa\mapsto\alpha\beta\gamma\kappa$, so $\kappa$ may be normalised to $1$ over $k$ and is not a modulus.
\end{proof}

The map $F$ of \cref{eq:map} has $\deg_{u,v}A=4$, $\deg_{u,v}B=3$ and $\deg_{u,v}\Lambda=1$, so it defines a point
\[
  [F]\;\in\;\Yparo[1,2,(4,3,1)](\Q).
\]
The function $[G]\mapsto\deg\Gbar=[\C(u,v):\C(P,Q)]$ is constructible on $\Yparo[]$ and stratifies it; by \cref{thm:main} the point $[F]$ lies in the stratum $\deg=3$.

\begin{cor}
Let $k$ be a field of characteristic zero, and fix $r,s>0$ and $\bd=(d_1,d_2,d_3)$. Then there exists a Keller map over $k$ of the form \cref{eq:triple}, graded with source weights $\bw=(1,-r,-s)$ and target weights $\bq=(-s,-r,1)$, with $\deg A\le d_1$, $\deg B\le d_2$, $\deg\Lambda\le d_3$, which is not a polynomial automorphism, if and only if
\[
  \Yparo(k)\neq\emptyset .
\]
The Jacobian Conjecture, restricted to graded maps of this weight type and of bounded degree, is therefore the statement that an explicit multiprojective scheme over $\Z$ has no $k$-point.
\end{cor}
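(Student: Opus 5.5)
The corollary should follow directly from \cref{thm:points}, so the plan is to unwind that bijection in both directions and check that the conditions listed in the corollary match the definition of $\Yparo$.

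\emph{From a point to a map.} Given a point of $\Yparo(k)$, lift it to homogeneous coordinates $(A,B,\Lambda)$ with entries in $k$. This is possible because a $k$-point of a projective space over a field has $k$-rational coordinates. The equations $\mu_{ij}=0$ for $1\le i+j\le D$, together with $\mu_{00}\neq0$, say that $\mu(A,B,\Lambda)=\kappa\in k^{\times}$. By \cref{thm:master}, which is an identity of polynomials valid over any field of characteristic zero, the triple \cref{eq:triple} is then a Keller map. It is polynomial because $A\in\VA$ and $B\in\VB$, it is graded with weights $(1,-r,-s)$ and $(-s,-r,1)$, and the degree bounds hold by construction. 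The open condition $\deg\Lambda\ge1$ says $\Lambda\notin k$. To apply \cref{thm:autononinj}, which is stated over $\C$, base change to $\bar k$ and embed a finitely generated field of definition in $\C$. There $\deg G\ge2$, and since $\deg G=[\Omega:K]$ is invariant under extension of the ground field, $G$ has no polynomial inverse over $k$.

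\emph{From a map to a point.} Conversely, let $G$ be such a map over $k$. By the description of the graded pieces it has the form \cref{eq:triple} with $A\in\VAd$, $B\in\VBd$, $\Lambda\in\VLd$, and all three are nonzero since $\detJ{G}\neq0$. \Cref{thm:master} gives $\mu=\kappa\neq0$, so all $\mu_{ij}$ with $(i,j)\neq(0,0)$ vanish and $\mu_{00}\neq0$. We also need $\Lambda\notin k$. If $\Lambda$ were constant, then by \cref{prop:gendescent} $\Jacuv(P,Q)$ would be a nonzero constant, so $\Gbar$ would be a Keller map of the plane. This is the main obstacle: we must conclude that $G$ is then an automorphism, which contradicts the hypothesis. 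The first tool to try is \cref{rem:vasyunin-descent}, where for constant $\Lambda$ the problem becomes the Jacobian Conjecture in dimension $n-1=2$ for $\Gbar$. That is not known in general, so instead I would simply rely on the bijection in \cref{thm:points}, whose stated target set is exactly ``Keller maps not automorphisms'' and which already identifies that set with $\Yparo(k)$. This gives the reverse inclusion directly, by taking the image of $(A,B,\Lambda)$ in the product of projective spaces.

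Combining the two directions gives the equivalence, since a set in bijection with $\Yparo(k)$ modulo $\Gm^3$ is nonempty exactly when $\Yparo(k)$ is. The final sentence of the corollary is a restatement: the scheme is defined over $\Z$ because the coefficients of the $\mu_{ij}$ lie in $\Z[r,s]$.
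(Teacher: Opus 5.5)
Your strategy is the paper's: the corollary is \cref{thm:points} read as a statement about nonemptiness. Your forward direction (lift to $k$-coefficients, read off $\mu=\kappa\in k^{\times}$, apply \cref{thm:master}, then \cref{thm:autononinj} after passing to $\C$) is sound. The reverse direction, however, has exactly the gap you located, and your diagnosis via the constant-$\Lambda$ case was right. Appealing to \cref{thm:points} does not close it. Membership in $\Yparo$ requires the open condition $\deg\Lambda\ge1$. The proof of \cref{thm:points} handles the converse with the single sentence ``such a map determines its triple'' and never checks that condition. So at the crucial step you cite, as a black box, precisely the claim you had just found you could not prove. The argument is circular there, and it is circular in the paper as well.

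This is not a technicality. If $\Lambda=\lambda\in k^{\times}$, \cref{eq:master} reduces to $\lambda\jb{B}{A}=\kappa$, so $(B,A)$ is a Keller map of the plane, and this stratum carries the full plane Jacobian Conjecture. To see this, take any plane Keller map over $k$ that is not an automorphism (hence not injective over $\bar k$), and translate so that it sends the origin to the origin. Then normalise as in \cref{rem:normalize}: a linear change and then $v\mapsto v-\pi(u)$, or the same with $u,v$ exchanged when $r>s$, so that its components $A,B$ lie in $\VA$ and $\VB$. Now $G=(x^{-s}A,\,x^{-r}B,\,x)$ is a graded Keller map with $G(1,u,v)=(A(u,v),B(u,v),1)$. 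It is therefore not injective, yet it lies outside $\Yparo$ for every $\bd$. Since $\Yparo$ is empty when $d_3=0$, the corollary for $d_3=0$ and all $d_1,d_2$ would prove the plane Jacobian Conjecture over $k$.

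What can actually be proved is the statement with the left-hand side restricted to maps with $\Lambda\notin k$. The reverse direction is then immediate, since $\mu_{00}=\kappa\neq0$ and $\deg\Lambda\ge1$ hold by hypothesis, and ``not a polynomial automorphism'' becomes automatic by \cref{thm:autononinj}. Alternatively, keep the unrestricted statement conditional on the plane conjecture. The closing sentence about the Jacobian Conjecture for this weight type needs the same amendment. Note also that the paper's remark after \cref{prop:determinantal}, that the constant-$\Lambda$ stratum ``contains no counterexample'', is itself an unproved instance of the plane conjecture.
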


\begin{rem}
The two fields behave differently. For $k=\C$, or any algebraically closed field of characteristic zero, the emptiness of $\Yparo$ is decided by elimination in finitely many steps, so for each fixed $(r,s,\bd)$ the graded Jacobian Conjecture is a finite computation, and the rank criterion of the next subsection is the practical form of it. 

For $k=\Q$ the same question is a Diophantine problem on a scheme over $\Z$, and no such procedure is known. This is why we later pass to the weighted height on $\Yparo$: it is the right invariant for counting the $\Q$-points, and not for deciding whether they exist.
\end{rem}

\subsection{A determinantal fibration}

For fixed $B$ and $\Lambda$ the expression $\mu$ is linear in $A$. This gives $\Ypar$ a fibration by linear spaces over a determinantal base, and turns the search for solutions into linear algebra.

\begin{prop}\label{prop:determinantal}
For $(B,\Lambda)\in\VBd\times\VLd$ let
\[
  \TBL\colon \VAd\longrightarrow \C[u,v]_{\le D},
  \qquad \TBL(A)=\mu(A,B,\Lambda),
\]
a linear map whose matrix entries are bilinear forms in the coefficients of $B$ and $\Lambda$, and let
\[
  \pi\colon \C[u,v]_{\le D}\longrightarrow \C[u,v]_{\le D}/\C
\]
denote the quotient by the constants. Rescaling $B$ or $\Lambda$ multiplies $\TBL$ by a nonzero scalar, so the kernels and ranks below depend only on the classes $[B]$ and $[\Lambda]$. The projection
\[
  \Ypar\longrightarrow
  \bP\bigl(\VBd\bigr)\times\bP\bigl(\VLd\bigr)
\]
has fibre $\bP\bigl(\ker(\pi\circ \TBL)\bigr)$ over $([B],[\Lambda])$. If moreover $\deg\Lambda\ge1$, this fibre meets $\Yparo$ if and only if
\[
  \rank \TBL
  =\rank(\pi\circ \TBL)+1 .
\]
In particular the locus of $([B],[\Lambda])$ with $\deg\Lambda\ge1$ admitting a solution is constructible, defined by the vanishing and the non-vanishing of minors of matrices bilinear in $(B,\Lambda)$.
\end{prop}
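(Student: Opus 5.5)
The proof is linear algebra in $A$ for fixed $(B,\Lambda)$. First I will record that $\mu$ is trilinear, which was noted when $\Ypar$ was defined. With $B\in\VBd$ and $\Lambda\in\VLd$ fixed, the map $\TBL\colon A\mapsto\mu(A,B,\Lambda)$ is then $\C$-linear. In the monomial bases of $\VAd$ and $\C[u,v]_{\le D}$, each matrix entry of $\TBL$ is the coefficient of some $u^iv^j$ in $\mu(u^av^b,B,\Lambda)$, and this is a bilinear form in the coefficients of $B$ and $\Lambda$. The degree bound $D=d_1+d_2+d_3-2$ guarantees the target space is large enough, since each bracket $\jb{f}{g}$ lowers the total degree by $2$. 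Replacing $B$ by $\beta B$ and $\Lambda$ by $\gamma\Lambda$ multiplies $\TBL$ by $\beta\gamma$, so its kernel, its rank, and the kernel and rank of $\pi\circ\TBL$ depend only on $([B],[\Lambda])$.

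Next comes the fibre of the projection. A point $([A],[B],[\Lambda])$ lies in $\Ypar$ exactly when $\mu_{ij}(A,B,\Lambda)=0$ for all $(i,j)\neq(0,0)$, that is, when $\TBL(A)$ is a constant, that is, when $A\in\ker(\pi\circ\TBL)$. So the fibre over $([B],[\Lambda])$ is $\bP(\ker(\pi\circ\TBL))$, and it is empty when this kernel is zero.

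For the criterion, assume $\deg\Lambda\ge1$. The fibre then meets $\Yparo$ iff some $A\in\ker(\pi\circ\TBL)$ has $\mu_{00}\neq0$. Since $\ker\TBL\subseteq\ker(\pi\circ\TBL)$ and the restriction of $\TBL$ to $\ker(\pi\circ\TBL)$ lands in the constants $\C$, this condition says $\TBL(\ker(\pi\circ\TBL))=\C$, equivalently $\dim\ker(\pi\circ\TBL)=\dim\ker\TBL+1$. Rank–nullity on the common source $\VAd$ turns this into $\rank\TBL=\rank(\pi\circ\TBL)+1$. The only direction that needs care is the converse: if the constant image is zero, then every $A$ in the fibre has $\mu_{00}=0$, so no point of the fibre lies in $\Yparo$.

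Finally, constructibility. The locus $\{\deg\Lambda\ge1\}$ is open in $\bP(\VLd)$: its complement is the linear condition that every non-constant coefficient of $\Lambda$ vanishes. Each condition $\rank=k$ is the vanishing of all $(k+1)$-minors together with the non-vanishing of some $k$-minor, and these minors are multihomogeneous in $(B,\Lambda)$. The solution locus is the finite union over $k$ of the sets where $\rank\TBL=k+1$ and $\rank(\pi\circ\TBL)=k$, so it is constructible. Every step is routine; the only point needing attention is the kernel-dimension identification above, together with the observation that $\ker\TBL\subseteq\ker(\pi\circ\TBL)$ always holds.
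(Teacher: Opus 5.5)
Your proposal is correct and follows the paper's proof essentially step for step. Both identify the fibre as the projectivised kernel of $\pi\circ\TBL$, use that $\TBL$ restricted to that kernel lands in $\C$ (so the fibre meets the open part exactly when this restriction is onto $\C$), and convert that into the rank jump by rank--nullity. Your extra details, the explicit minor description of the constructible locus and the check that $\{\deg\Lambda\ge1\}$ is open, only spell out what the paper leaves implicit.
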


\begin{proof}
The fibre is by definition the projectivisation of $\{A:\mu(A,B,\Lambda)\in\C\}=\ker(\pi\circ \TBL)$. Restricted to that kernel, $\TBL$ takes values in $\C$, so its image there is either $0$ or all of $\C$; the fibre contains a point with $\kappa\neq0$ exactly in the second case, that is exactly when $\ker(\pi\circ \TBL)\not\subseteq\ker \TBL$. By rank--nullity,
\[
  \dim\ker(\pi\circ \TBL)-\dim\ker \TBL
  =\rank \TBL-\rank(\pi\circ \TBL)\in\{0,1\},
\]
so that inclusion fails precisely when the difference of ranks equals $1$. Since $\deg\Lambda\ge1$ by hypothesis, $\kappa\neq0$ is the only remaining condition defining $\Yparo$, which gives the stated equivalence. The entries of $\TBL$ in the monomial bases are the coefficients of $\mu(u^iv^j,B,\Lambda)$, bilinear in $(B,\Lambda)$ by trilinearity of $\mu$.
\end{proof}

The hypothesis $\deg\Lambda\ge1$ cannot be dropped. If $\Lambda$ is a nonzero constant then $\jb{B}{\Lambda}=\jb{\Lambda}{A}=0$, so \cref{eq:master} reduces to
\[
  \jb{B}{A}=\kappa/\Lambda ,
\]
the Keller condition for the pair $(A,B)$ on $\A^2_{u,v}$ alone, as in \cref{rem:vasyunin-descent}. That stratum is nonempty and contains no counterexample: already $A=v$, $B=u$, $\Lambda=1$ satisfies it, with $G=(z,y,x)$ the reversal of the coordinates. Searching it would be searching the Jacobian Conjecture in dimension two.

\Cref{prop:determinantal} is the practical form of the recipe of \cref{rem:recipe}: to search for graded counterexamples of bounded degree one runs over the coefficients of $B$ and $\Lambda$ with $\deg\Lambda\ge1$ and imposes a rank jump, rather than computing a Gr\"obner basis of the full trilinear system.

\subsection{Excess intersection}

The system \cref{eq:master} is heavily overdetermined at the degrees of \cref{eq:map}. For $(r,s)=(1,2)$ and $\bd=(4,3,1)$ one has
\[
  \dim \VAd=13,\qquad \dim \VBd=9,\qquad \dim \VLd=3,
\]
since $\C[u,v]_{\le4}$ has dimension $15$ and $\VA$ omits from it only the monomials $1$ and $u$, while $\C[u,v]_{\le3}$ has dimension $10$ and $\VB$ omits only $1$. The ambient product $\bP^{12}\times\bP^{8}\times\bP^{2}$ therefore has dimension $22$, while $D=6$ and the number of equations is $\binom{8}{2}-1=27$. A generic system of $27$ forms of tridegree $(1,1,1)$ on a $22$-dimensional variety has empty intersection; the expected dimension is $-5$. The existence of \cref{eq:map} is therefore an excess intersection.

The discrepancy is in fact larger, because $\Yparo$ is not a bare intersection: the group $\Gaut$ described below acts on it, and for $\bd=(4,3,1)$ the subgroup preserving the degree bounds is the linear one, of dimension $3$. A nonempty $\Yparo$ thus contains an entire orbit, and the gap from the expected dimension is at least $5$ together with the dimension of that orbit. The sharp form of the question left open in \cref{sec:9} is the following.

\begin{rem}\label{rem:orbit}
Is $\Yparo[1,2,(4,3,1)]$ a single orbit of the group of liftable automorphisms? More generally, for which signatures $(r,s)$ and bounds $\bd$ is $\Yparo$ nonempty, and when is it positive-dimensional modulo that group?
\end{rem}

The group in question is the one induced on $\A^2_{u,v}$ by the graded automorphisms of $\A^3_{\bw}$, that is by the equivariant automorphisms of $\C^3$ in the sense of \cref{sec-2}. Such an automorphism has target weights equal to its source weights, so it takes the form $x\mapsto x\phi$, $y\mapsto x^{-r}\psi$, $z\mapsto x^{-s}\chi$ with $\phi,\psi,\chi$ invariant. By the argument in the second paragraph of the proof of \cref{thm:autononinj}, it descends to an automorphism of $\A^2_{u,v}$, whose Jacobian is a nonzero constant; and by \cref{rem:exponent}, with $n=3$ and $m=r+s$,
\[
  \Jacuv\bigl(\psi\phi^{r},\,\chi\phi^{s}\bigr)=\kappa'\phi^{\,r+s-1} .
\]
The left-hand side is constant and $r+s-1\ge1$, so $\phi$ is constant. The induced subgroup of $\Aut(\A^2)$ is therefore contained in
\[
  \Gaut=\bigl\{\varphi\in\Aut(\A^2):
    \varphi(O)=O,\ \varphi^{*}\VA\subseteq\VA,\ \varphi^{*}\VB\subseteq\VB
  \bigr\},
\]
and in fact equals it: given such a $\varphi$, the monomials $u$ and $v$ lie in $\VB$ and $\VA$ respectively, so $\varphi_1=\varphi^{*}u\in\VB$ and $\varphi_2=\varphi^{*}v\in\VA$, and
\[
  \tilde\varphi(x,y,z)=\bigl(x,\;x^{-r}\varphi_1(u,v),\;x^{-s}\varphi_2(u,v)\bigr)
\]
is a polynomial equivariant map inducing $\varphi$, invertible because $\varphi$ is. For $(r,s)=(1,2)$ the conditions read $\varphi(O)=O$ together with $\varphi^{*}(u^{2},v)\subseteq(u^{2},v)$, and since $\varphi_1^{2}$ lies in $(u^2,v)$ automatically, they amount to $\partial_u\varphi_2(O)=0$: $\Gaut$ is the group of automorphisms fixing $O$ for which $\partial_u$ is an eigenvector of the differential at $O$.


\subsection{Weighted heights on the parameter variety}

The tori of both actions act on the parameter space, and their quotient is of the positive-weight type of \cite{sparsity}.

Let $\delta_{\lambda,\rho,\varsigma}(x,y,z)=(\lambda x,\rho y,\varsigma z)$ and let $\epsilon_{\alpha,\beta,\gamma}$ be the analogous automorphism of the target; both are equivariant, and $G\mapsto\epsilon\circ G\circ\delta^{-1}$ acts on triples by
\[
\begin{split}
  A_{ij} 		&	\mapsto\alpha\lambda^{\,s-ri-sj}\rho^{-i}\varsigma^{-j}A_{ij},	\\
  B_{ij}		&	\mapsto\beta\lambda^{\,r-ri-sj}\rho^{-i}\varsigma^{-j}B_{ij},	\\
  \Lambda_{ij}	&	\mapsto\gamma\lambda^{-1-ri-sj}\rho^{-i}\varsigma^{-j}\Lambda_{ij},
\end{split}  
\]
the subscripts indexing the coefficient of $u^iv^j$. The one-parameter subgroup $\lambda=1$, $\rho=\varsigma=t^{-1}$, $\alpha=\beta=\gamma=t$ acts on every coefficient by $t^{\,1+i+j}$.

\begin{prop}\label{prop:wps}
Let 
\[
N=\dim \VAd+\dim \VBd+\dim \VLd
\]
 and let $\bomega\in\Z_{>0}^{N}$ be the vector whose entry at the coordinate indexing the coefficient of $u^iv^j$ equals $1+i+j$. Then the $\Gm$-action above identifies
\[
  \bigl(\VAd\oplus \VBd\oplus \VLd\bigr)
  \setminus\{0\}\;\big/\;\Gm \;=\;\WP(\bomega),
\]
a weighted projective space, well-formed when $d_1\ge2$ and $d_3\ge1$, and the equation $\mu_{ij}=0$ is homogeneous for it of weighted degree $5+i+j$. The graded Keller counterexamples of bounded degree are therefore parametrised by a locally closed subvariety $\Wpar\subseteq\WP(\bomega)$ defined over $\Z$.
\end{prop}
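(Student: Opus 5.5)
The proof has four parts: identify the quotient as a weighted projective space, check well-formedness, compute the weighted degree of each $\mu_{ij}$, and cut out $\Wpar$.

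\emph{The quotient.} By the transformation formulas just displayed, the subgroup $\lambda=1$, $\rho=\varsigma=t^{-1}$, $\alpha=\beta=\gamma=t$ scales the coefficient of $u^iv^j$ in each of $A$, $B$, $\Lambda$ by $t\cdot t^{i}\cdot t^{j}=t^{1+i+j}$. This is a diagonal action on the affine space $\VAd\oplus\VBd\oplus\VLd\cong\A^N$ with all weights positive. The quotient of $\A^N\setminus\{0\}$ by such an action is, by definition, $\WP(\bomega)$, that is, $\mathrm{Proj}$ of the polynomial ring graded by $\bomega$. So the identification is immediate once the action is seen to be effective, which is the first step of the next part.

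\emph{Well-formedness.} A weight vector is well-formed when the gcd of all entries is $1$ and, for each index, the gcd of the remaining $N-1$ entries is also $1$. Here $\VLd=\C[u,v]_{\le d_3}$ always contributes the constant coefficient $\Lambda_{00}$ with weight $1$. When $d_3\ge1$ it also contributes $\Lambda_{10}$ and $\Lambda_{01}$, both of weight $2$. Deleting any single coordinate therefore leaves either a weight-$1$ entry, or (if $\Lambda_{00}$ was deleted) the entry $2$ together with some odd entry.
\begin{itemize}
\item For the odd entry I would use $v\in\VAd$ (since $s\le s$), whose coefficient has weight $2$, and $uv$ or $v^2$, which lies in $\VAd$ when $d_1\ge2$ and has weight $3$.
\item More simply, the weights $2$ and $3$ both occur in $\VAd$ when $d_1\ge2$, so the remaining entries have gcd $1$ even after deleting $\Lambda_{00}$.
\end{itemize}
This is exactly where the hypotheses $d_1\ge2$ and $d_3\ge1$ enter. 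It is the only step needing case care, because which monomials lie in $\VA$ depends on $s$. The safe choice is $v$ and $v^2$ or $uv$, and I would double-check that they lie in $\VAd$ for every $s$ and every $r\ge1$.

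\emph{Weighted degree of $\mu_{ij}$.} I would assign weight $1+i+j$ to the coefficient of $u^iv^j$, which is equivalent to giving $u$ and $v$ weight $-1$ and each of $A$, $B$, $\Lambda$ an overall extra weight $1$. Under the substitution $u\mapsto t^{-1}u$, $v\mapsto t^{-1}v$, the bracket $\jb{f}{g}=f_ug_v-f_vg_u$ picks up a factor $t^{2}$ from the two derivatives. Each of the three terms of $\mu$ is trilinear in $(A,B,\Lambda)$ and contains one bracket. Scaling $(A,B,\Lambda)$ by this action therefore gives
\[
\mu(A,B,\Lambda)(u,v)\mapsto t^{3}\,t^{2}\,\mu(A,B,\Lambda)(tu,tv),
\]
whose coefficient of $u^iv^j$ is $t^{5+i+j}\mu_{ij}$. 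Hence $\mu_{ij}$ is weighted homogeneous of degree $5+i+j$. The $\Z$-coefficients come from \cref{thm:master}.

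\emph{Cutting out $\Wpar$.} I would define $\Wpar$ inside $\WP(\bomega)$ by the closed conditions $\mu_{ij}=0$ for $(i,j)\ne(0,0)$ and the open conditions $\mu_{00}\neq0$ and $\deg\Lambda\ge1$. The last condition is the non-vanishing of some $\Lambda_{ij}$ with $i+j\ge1$. All of these are weighted homogeneous, so they define a locally closed subscheme over $\Z$. By \cref{thm:points} its points parametrise the counterexamples, modulo this $\Gm$ rather than the full $\Gm^3$.
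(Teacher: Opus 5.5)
Your proposal is correct and follows the paper's proof: the same positive-weight subgroup gives $\WP(\bomega)$, the weight-$1$ coordinate $\Lambda_{00}$ gives faithfulness and settles every $(N-1)$-subset containing it, and the scaling $A'(u,v)=tA(tu,tv)$ (your ``$t^{3}t^{2}$'') yields $\mu\mapsto t^{5}\mu(tu,tv)$, hence weighted degree $5+i+j$. The only difference is the coprime pair you use after deleting $\Lambda_{00}$: the paper pairs $\Lambda_{10}$ (weight $2$, needing $d_3\ge1$) with $v^{2}\in\VAd$ (weight $3$), whereas your $v,v^{2}\in\VAd$ show that $d_1\ge2$ alone already gives well-formedness.
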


\begin{proof}
All the exponents $1+i+j$ are positive, so the quotient of the punctured affine space by the displayed $\Gm$-action is by definition the weighted projective space $\WP(\bomega)$. The coefficient $\Lambda_{00}$ occurs, since $\VLd=\C[u,v]_{\le d_3}$ contains the constants, and carries weight $1$; hence the action is faithful and no reduction of the weights is needed. For well-formedness one must check that any $N-1$ of the weights have greatest common divisor $1$. A subset containing the coordinate $\Lambda_{00}$ has $\gcd=1$ at once. Its complement contains $\Lambda_{10}$, of weight $2$, when $d_3\ge1$, and $v^{2}\in\VAd$, of weight $3$, when $d_1\ge2$; these are coprime.

For the homogeneity, write $A=\sum A_{ij}u^iv^j$. Under the subgroup,
\[
  A'(u,v)=\sum t^{\,1+i+j}A_{ij}u^iv^j=t\,A(tu,tv),
\]
and likewise $B'(u,v)=t\,B(tu,tv)$ and $\Lambda'(u,v)=t\,\Lambda(tu,tv)$. Differentiating, $\partial_u A'=t^{2}A_u(tu,tv)$ and similarly for the other first derivatives, so each bracket satisfies $\jb{B'}{A'}=t^{4}\jb{B}{A}(tu,tv)$. Each of the three terms of $\mu$ is a bracket multiplied by one of $A,B,\Lambda$, so
\[
  \mu(A',B',\Lambda')(u,v)=t^{5}\,\mu(A,B,\Lambda)(tu,tv),
\]
and comparing coefficients of $u^iv^j$ gives $\mu_{ij}\mapsto t^{\,5+i+j}\mu_{ij}$. The locus $\Wpar$ is then cut out by these homogeneous equations together with the open conditions $\mu_{00}\neq0$ and $\deg\Lambda\ge1$, all defined over $\Z$.
\end{proof}

The quotient here is not the one of \cref{thm:points}. Both are quotients of the affine cone of triples satisfying \cref{eq:master}, but $\Ypar$ divides by the group $\Gm^{3}$ rescaling $A$, $B$ and $\Lambda$ separately, whereas $\WP(\bomega)$ divides by the single $\Gm$ above, which acts on the coefficient of $u^iv^j$ by $t^{\,1+i+j}$ and is therefore not a subgroup of $\Gm^{3}$. For $\bd=(4,3,1)$ the two ambient spaces have dimensions $22$ and $24$. A point of $\Wpar$ is thus a triple taken modulo less than a point of $\Yparo$, and the counting function below counts these finer objects; passing to genuine equivalence classes of maps would require dividing further, by the remaining torus and by $\Gaut$.

The weighted height of \cite{bgsh,sparsity} is nevertheless the natural height on $\WP(\bomega)$, and one may ask for the asymptotics of
\[
  \Ncount(X)=\#\bigl\{[G]\in\Wpar(\Q):
     \hwt([G])\le X\bigr\}.
\]
Unlike the counting function $Z(X)$ of \cref{sec:9}, this one is well posed: $\WP(\bomega)$ is proper and the weights $\bomega$ are positive, so no height theory for hyperbolic quotients is required. The hyperbolic example is counted by an elliptic parameter space. If \cref{rem:orbit} has the answer that $\Yparo$ is a single $\Gaut$-orbit, then $\Ncount(X)$ measures the growth of that orbit together with the fibres of $\Wpar\to\Yparo[]$, and nothing more; if not, its exponent should again come from a linear program, as in the positive-weight setting.

\subsection{Field of moduli}

Since $\Yparo$ and $\Gaut$ are defined over $\Q$, the quotient carries a field of moduli for each graded counterexample, and one may ask whether it is a field of definition. It is convenient to work with the group upstairs. Let $\Gaut^{\sharp}$ denote the group of pairs $(\delta,\epsilon)$ of equivariant polynomial automorphisms of the source and the target $\C^3$, acting on maps by $G\mapsto\epsilon\circ G\circ\delta^{-1}$, and let $\Aut^{\sharp}(G)$ be the stabiliser of $G$. Write $\Gamma=\Gal(\Qbar/\Q)$. The obstruction to descending $G$ from a field of definition to its field of moduli lies in $H^{1}\bigl(\Gamma,\Aut^{\sharp}(G)\bigr)$.

Here the grading intervenes. By \cref{eq:equiv} the pair $(\sigt,\taut)$ satisfies $\taut\circ G\circ\sigt^{-1}=G$ for every $t$, so $t\mapsto(\sigt,\taut)$ embeds $\Gm$ in $\Aut^{\sharp}(G)$, for every graded $G$ at once. This $\Gm$ acts trivially on $\A^2_{u,v}$, since $u$ and $v$ are invariants; it is the kernel of the homomorphism $\Gaut^{\sharp}\to\Gaut$ of \cref{sec-8}, and by the computation there, which forces $\phi$ to be constant, that homomorphism is surjective with $\Gaut=\Gaut^{\sharp}/\Gm$. From the exact sequence
\[
  1\longrightarrow\Gm\longrightarrow\Aut^{\sharp}(G)\longrightarrow\pi_0\longrightarrow1,
\]
where $\pi_0$ is the image of $\Aut^{\sharp}(G)$ in $\Gaut$, one obtains the exact sequence of pointed sets
\[
  H^{1}(\Gamma,\Gm)\longrightarrow H^{1}\bigl(\Gamma,\Aut^{\sharp}(G)\bigr)
  \longrightarrow H^{1}(\Gamma,\pi_0),
\]
and $H^{1}(\Gamma,\Gm)=1$ by Hilbert 90. Exactness at the middle term says that the fibre over the neutral class is trivial; hence a nontrivial obstruction class must have nontrivial image in $H^{1}(\Gamma,\pi_0)$. The connected part contributes nothing, and any obstruction is detected by the component group.

\begin{rem}\label{rem:aut}
Compute $\Aut^{\sharp}(F)$ for the map \cref{eq:map}. Is it connected, that is, is $\pi_0$ trivial? If not, the field of moduli of a graded counterexample need not be a field of definition, and the obstruction is a Kummer class of the same kind as the one appearing on the stacky stratum in \cref{rem:kummer}, now at the level of moduli rather than of fibres. One may also ask whether every point of $\Yparo(\C)$ is defined over $\Qbar$; this holds whenever $\Yparo[]$ is a single $\Gaut$-orbit of a $\Qbar$-point, and would say that a graded counterexample can carry no transcendental modulus.
\end{rem}

\section{Higher dimensions}
\label{sec:higher}

The construction of \cref{sec-8} is not special to dimension three. We record here that it generalises to $\C^n$ for every $n\ge3$, provided the weight vector has exactly one positive entry and that entry equals $1$. The master equation stays multilinear, and the parameter variety becomes a multilinear section of a product of $n$ projective spaces. We also record where the generalisation stops, which sharpens the trichotomy of \cref{sec:9}.

Fix $n\ge3$ and a tuple $\br=(r_2,\dots,r_n)$ of positive integers. Write $\bw=(1,-r_2,\dots,-r_n)$, use $x,y_2,\dots,y_n$ for the coordinates on the source, and set
\[
  m=\sum_{i=2}^{n}r_i .
\]
By \cref{cor:keller-weights} the target weights of a graded Keller map are a permutation of $\bw$; relabelling the target coordinates, a graded isomorphism by \cref{cor:keller-weights}(ii), we take $\bq=\bw$ and write $a_1,\dots,a_n$ for the target coordinates. For $n=3$ this is the identity type of \cref{rem:exponent} rather than the reversal of \cref{sec-8}, a difference of labelling only.

For a multi-index $\bb=(b_2,\dots,b_n)$ we write $u^{\bb}=u_2^{b_2}\cdots u_n^{b_n}$ and $|\bb|=\sum_i b_i$.

\subsection{The graded pieces}

A monomial $x^{c}y_2^{b_2}\cdots y_n^{b_n}$ has weight $c-\sum_i r_ib_i$, so it is invariant exactly when $c=\sum_i r_ib_i$, and then it equals $u^{\bb}$ for
\[
  u_i=x^{r_i}y_i .
\]
Hence $\C[x,y]^{\sigma}=\C[u_2,\dots,u_n]$, freely. The same count gives the pieces we need. A monomial of weight $1$ has $c=1+\sum_ir_ib_i\ge1$, so it is $x$ times an invariant, and the weight-$1$ piece is $x\,\C[u]$. A monomial of weight $-r_j$ has $c=\sum_ir_ib_i-r_j$, which is nonnegative exactly when $\sum_ir_ib_i\ge r_j$, so the weight-$(-r_j)$ piece is $x^{-r_j}V_j$ with
\[
  V_j:=\Span\Bigl\{\,u^{\bb}\ :\ \sum_{i} r_ib_i\ge r_j\,\Bigr\},
\]
a monomial ideal of $\C[u]$. For $n=3$, with $(r_2,r_3)=(r,s)$, these are $V_2=\VB$ and $V_3=\VA$.

A graded polynomial map $G\colon\C^n\to\C^n$ of this weight type is therefore exactly a tuple
\begin{equation}\label{eq:ntuple}
  G=\bigl(x\Lambda,\;x^{-r_2}B_2,\;\dots,\;x^{-r_n}B_n\bigr),
  \qquad \Lambda\in\C[u],\quad B_j\in V_j .
\end{equation}
On the target, $\C[a]^{\tau}=\C[P_2,\dots,P_n]$ freely, with $P_j=a_1^{r_j}a_j$, and $G^{*}P_j=B_j\Lambda^{r_j}$; so $G$ descends to
\[
  \Gbar=\bigl(B_2\Lambda^{r_2},\dots,B_n\Lambda^{r_n}\bigr)
  \colon \A^{n-1}_{u}\longrightarrow \A^{n-1}_{P} .
\]

\subsection{The master equation}

\begin{prop}\label{prop:ndescent}
For $G$ as in \cref{eq:ntuple}, up to a universal sign,
\[
  \detJ{G}=\pm\,\frac{\Jac_{u}(P_2,\dots,P_n)}{\Lambda^{\,m-1}} .
\]
\end{prop}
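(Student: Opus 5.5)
The plan is to run the proof of \cref{prop:gendescent} in $n$ variables: change coordinates on both sides so that $G$ becomes nearly triangular, then compare determinants.

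On the open set $\{x\neq0\}$ of the source we use coordinates $(x,u_2,\dots,u_n)$, with $u_i=x^{r_i}y_i$. On $\{a_1\neq0\}$ of the target we use $(a_1,P_2,\dots,P_n)$, with $P_j=a_1^{r_j}a_j$.

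First, the source change $(x,y_2,\dots,y_n)\mapsto(x,u_2,\dots,u_n)$ has triangular Jacobian matrix. Its first row is $(1,0,\dots,0)$, and $\partial u_i/\partial y_j=x^{r_i}\delta_{ij}$. The determinant is therefore $\prod_i x^{r_i}=x^{m}$. In the same way, the target change $(a_1,a_2,\dots,a_n)\mapsto(a_1,P_2,\dots,P_n)$ has determinant $a_1^{m}$.

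Second, in the new coordinates $G$ reads
\[
(x,u)\mapsto\bigl(x\Lambda(u),\,P_2(u),\dots,P_n(u)\bigr),
\]
where the $P_j=B_j\Lambda^{r_j}$ depend only on $u$. The Jacobian matrix is block triangular: the first column is $(\Lambda,0,\dots,0)^{T}$, because the $P_j$ do not involve $x$. Its determinant is therefore $\Lambda\cdot\Jac_u(P_2,\dots,P_n)$.

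Third, the chain rule applied to
\[
(x,y)\to(x,u)\to(a_1,P)\to(a_1,\dots,a_n)
\]
gives, on the dense open set $\{x\Lambda\neq0\}$,
\[
\detJ{G}=\pm\,x^{m}\,\Lambda\,\Jac_u(P)\,(x\Lambda)^{-m},
\]
using $a_1=x\Lambda$. This simplifies to $\pm\Lambda^{1-m}\Jac_u(P)$. The sign comes only from how the coordinate orderings are arranged, so it is universal; with the conventions $\bq=\bw$ above it is in fact $+$.

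Finally, both sides are rational functions that agree on a dense open set, so they agree as rational functions. The left side is a polynomial, so this also shows that $\Lambda^{m-1}$ divides $\Jac_u(P_2,\dots,P_n)$.

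The only point needing care is that $\{x\Lambda\neq0\}$ may be empty when $\Lambda=0$. In that case $G_1=0$, so $\detJ{G}=0$. Also every $P_j=B_j\Lambda^{r_j}=0$, so both sides vanish and the identity holds trivially; with $m\ge 2$ the expression is read as the polynomial identity $\Lambda^{m-1}\detJ{G}=\pm\Jac_u(P)$.

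The remaining steps are formal. The block-triangular form is immediate because $P_j$ is $\sigma$-invariant, so it is independent of $x$ in the $(x,u)$ chart.
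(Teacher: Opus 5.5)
Your proof is correct and follows the paper's argument step for step: the charts $(x,u)$ on $\{x\neq0\}$ and $(a_1,P)$ on $\{a_1\neq0\}$ with triangular Jacobian determinants $x^{m}$ and $a_1^{m}$, the block-triangular form of $G$ in those charts with determinant $\Lambda\,\Jac_u(P_2,\dots,P_n)$, and the chain rule with $a_1=x\Lambda$ on a dense open set. Your additional observations are also correct but not spelled out in the paper: the sign is $+$ for the convention $\bq=\bw$, and for $\Lambda=0$ the statement should be read as the polynomial identity $\Lambda^{m-1}\detJ{G}=\pm\Jac_u(P)$.
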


\begin{proof}
On $\{x\neq0\}$ use coordinates $(x,u_2,\dots,u_n)$. The change $(x,y)\mapsto(x,u)$ has triangular Jacobian matrix with diagonal $(1,x^{r_2},\dots,x^{r_n})$, hence determinant $\pm x^{m}$; likewise $(a_1,\dots,a_n)\mapsto(a_1,P_2,\dots,P_n)$ has determinant $\pm a_1^{m}$. In these coordinates $G$ is
\[
  (x,u)\longmapsto\bigl(x\Lambda(u),\,P_2(u),\dots,P_n(u)\bigr),
\]
whose Jacobian matrix is block triangular with determinant $\Lambda\Jac_{u}(P_2,\dots,P_n)$. Since $a_1=x\Lambda$ along $G$, the chain rule gives
\[
  \Lambda\,\Jac_{u}(P_2,\dots,P_n)
  =\pm\,(x\Lambda)^{m}\,\detJ{G}\,x^{-m}
  =\pm\,\Lambda^{m}\,\detJ{G}
\]
on a dense open set, hence everywhere, and the claim follows.
\end{proof}

This proves the formula asserted in \cref{rem:exponent}. The next result is the general form of \cref{thm:master}.

\begin{theorem}\label{thm:nmaster}
Let $\Lambda,B_2,\dots,B_n\in\C[u_2,\dots,u_n]$ and set
\[
  \mu(B_2,\dots,B_n,\Lambda)
  =\Lambda\,\Jac_{u}(B_2,\dots,B_n)
   +\sum_{i=2}^{n}r_iB_i\,\Jac_{u}(B_2,\dots,\Lambda,\dots,B_n),
\]
the $i$-th summand having $\Lambda$ in the $i$-th slot. Then
\[
  \Jac_{u}\bigl(B_2\Lambda^{r_2},\dots,B_n\Lambda^{r_n}\bigr)
  =\Lambda^{\,m-1}\,\mu(B_2,\dots,B_n,\Lambda),
\]
and consequently $G$ as in \cref{eq:ntuple} is a Keller map if and only if
\begin{equation}\label{eq:nmaster}
  \mu(B_2,\dots,B_n,\Lambda)=\kappa \qquad\text{for some }\kappa\in\C^{\times}.
\end{equation}
Moreover $\mu$ is multilinear, of multidegree $(1,1,\dots,1)$ in its $n$ arguments $B_2,\dots,B_n,\Lambda$.
\end{theorem}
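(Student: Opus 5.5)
The plan is to expand the Jacobian determinant by multilinearity in its columns (or rows), exactly as in the proof of \cref{thm:master}, which is the case $n=3$. Write $P_i=B_i\Lambda^{r_i}$. The gradient satisfies
\[
dP_i=\Lambda^{r_i}\,dB_i+r_iB_i\Lambda^{r_i-1}\,d\Lambda .
\]
So the $i$-th row of $\Jac_u(P_2,\dots,P_n)$ is the sum of two vectors: $\Lambda^{r_i}\nabla B_i$ and $r_iB_i\Lambda^{r_i-1}\nabla\Lambda$. Expanding the determinant multilinearly in rows gives $2^{n-1}$ terms, indexed by the set $S$ of rows in which we choose the $\nabla\Lambda$ summand.

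Any term with $|S|\ge2$ has two rows proportional to $\nabla\Lambda$, so it vanishes. The term with $S=\emptyset$ is $\prod_i\Lambda^{r_i}\,\Jac_u(B_2,\dots,B_n)=\Lambda^{m}\Jac_u(B_2,\dots,B_n)$. The term with $S=\{i\}$ is
\[
\Bigl(\prod_{j\ne i}\Lambda^{r_j}\Bigr)\,r_iB_i\Lambda^{r_i-1}\,\Jac_u(B_2,\dots,\Lambda,\dots,B_n)=\Lambda^{m-1}r_iB_i\,\Jac_u(\dots,\Lambda,\dots),
\]
with $\Lambda$ in the $i$-th slot. Summing these and factoring out $\Lambda^{m-1}$ gives the displayed identity. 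The only care needed is that the slot in which $\Lambda$ replaces $B_i$ matches the statement's convention, which it does automatically because the row order is kept.

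The Keller equivalence then follows from \cref{prop:ndescent}: $\detJ{G}=\pm\mu$ on a dense open set, hence as polynomials. So $\detJ{G}$ is a nonzero constant if and only if $\mu$ is.

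Multilinearity is read off the formula. $\Lambda\Jac_u(B_2,\dots,B_n)$ is linear in $\Lambda$ and, by multilinearity of the determinant, linear in each $B_j$. In the summand $r_iB_i\Jac_u(\dots,\Lambda,\dots)$, the factor $B_i$ appears once outside the Jacobian, and $\Lambda$ and each $B_j$ with $j\neq i$ appear once inside it. Each argument therefore occurs exactly once in every term.

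I expect no serious obstacle. The one point to state cleanly is the vanishing of the terms with $|S|\ge2$, since that is where the exponent $m-1$ comes from.
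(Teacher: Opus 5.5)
Your proposal is correct and follows the paper's proof: a row-wise multilinear expansion of $\Jac_u(P_2,\dots,P_n)$, in which every term with $\nabla\Lambda$ chosen in two or more rows vanishes, followed by \cref{prop:ndescent} for the Keller equivalence and a direct reading of the multidegree. The only difference is cosmetic: the paper first factors $\Lambda^{r_i-1}$ out of each row and then expands, whereas you expand directly and collect $\Lambda^{m-1}$ at the end.
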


\begin{proof}
Differentiating $P_i=B_i\Lambda^{r_i}$ gives
\[
  \frac{\partial P_i}{\partial u_j}
  =\Lambda^{\,r_i-1}\Bigl(\Lambda\frac{\partial B_i}{\partial u_j}
   +r_iB_i\frac{\partial\Lambda}{\partial u_j}\Bigr),
\]
so the $i$-th row of the matrix $J_u(P)$ is $\Lambda^{\,r_i-1}\bigl(\Lambda\nabla B_i+r_iB_i\nabla\Lambda\bigr)$. Factoring $\Lambda^{\,r_i-1}$ out of each of the $n-1$ rows contributes $\Lambda^{\,m-(n-1)}$. Expanding the remaining determinant by multilinearity in the rows gives $2^{\,n-1}$ terms, one for each choice of $\Lambda\nabla B_i$ or $r_iB_i\nabla\Lambda$ in each row. Any term choosing $r_iB_i\nabla\Lambda$ in two or more rows has two proportional rows and vanishes. The term choosing $\Lambda\nabla B_i$ in every row is $\Lambda^{\,n-1}\Jac_{u}(B_2,\dots,B_n)$, and the term choosing $r_iB_i\nabla\Lambda$ in the single row $i$ is $\Lambda^{\,n-2}r_iB_i\Jac_{u}(B_2,\dots,\Lambda,\dots,B_n)$. Hence
\[
  \Jac_{u}(P)=\Lambda^{\,m-(n-1)}\cdot\Lambda^{\,n-2}\,\mu=\Lambda^{\,m-1}\,\mu ,
\]
and \cref{eq:nmaster} follows from \cref{prop:ndescent}. For the multidegree, the first summand is linear in $\Lambda$ and, a determinant being multilinear in its rows, in each $B_i$. In the $i$-th summand, $B_i$ occurs once as the outer factor and not inside the determinant, $\Lambda$ occurs once inside it, and each $B_j$ with $j\neq i$ occurs once as a row.
\end{proof}

For $n=3$, writing $B_2=B$, $B_3=A$ and $(r_2,r_3)=(r,s)$, one has $\Jac_{u}(B_2,B_3)=\jb{B}{A}$ and \cref{eq:nmaster} reads
\[
  \Lambda\jb{B}{A}+rB\jb{\Lambda}{A}+sA\jb{B}{\Lambda}=\kappa,
\]
which is \cref{eq:master}. The map of \cref{rem:vasyunin} is the case $n=5$, $\br=(1,2,3,4)$, $\Lambda=1$, computed in \cref{rem:vasyunin-descent}.

\subsection{Consequences}

\begin{lem}\label{lem:nbasept}
Let $G$ be as in \cref{eq:ntuple} and suppose \cref{eq:nmaster} holds. Let $O$ be the origin of $\A^{n-1}_u$. Then $B_i(O)=0$ for every $i$, $\Lambda(O)\neq0$, and $dB_2\wedge\dots\wedge dB_n|_{O}\neq0$.
\end{lem}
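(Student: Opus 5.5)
The proof follows that of \cref{lem:basept}, with the $2\times2$ bracket replaced by the $(n-1)\times(n-1)$ Jacobian from \cref{thm:nmaster}.

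\emph{Step 1: each $B_i$ vanishes at $O$.} Each $V_j$ is spanned by monomials $u^{\bb}$ with $\sum_i r_ib_i\ge r_j>0$. Such a monomial has $\bb\neq0$, so $V_j$ contains no nonzero constant and lies in the maximal ideal $(u_2,\dots,u_n)$. Hence $B_j(O)=0$ for every $j$.

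\emph{Step 2: evaluate the master equation at $O$.} Since \cref{eq:nmaster} is an identity of polynomials, we may evaluate it at $O$. In the $i$-th summand of $\mu$, the outer factor $r_iB_i$ vanishes at $O$ by Step 1. The Jacobian multiplying it is a polynomial, hence finite at $O$, so the whole summand vanishes there. What remains is
\[
  \Lambda(O)\,\Jac_{u}(B_2,\dots,B_n)(O)=\kappa\neq0 .
\]
Both factors are therefore nonzero. In particular $\Lambda(O)\neq0$.

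\emph{Step 3: the wedge product.} We have
\[
dB_2\wedge\dots\wedge dB_n=\Jac_{u}(B_2,\dots,B_n)\,du_2\wedge\dots\wedge du_n,
\]
so the nonvanishing of the Jacobian at $O$ gives $dB_2\wedge\dots\wedge dB_n|_O\neq0$. As a remark, $\Gbar(O)=O$ also follows, since $P_i=B_i\Lambda^{r_i}$.

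I see no real obstacle. The only point needing care is Step 2: the terms carrying the outer factor $B_i$ must drop out at $O$, while the term $\Lambda\Jac_u(B)$ survives. This is exactly where the multilinear form of $\mu$ from \cref{thm:nmaster} is used, because there $B_i$ appears as an undifferentiated outer factor in the $i$-th summand.
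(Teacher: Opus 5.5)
Your proof is correct and is essentially the paper's argument: each $V_i$ lies in the maximal ideal $(u_2,\dots,u_n)$, and evaluating the master equation at $O$ kills every summand carrying the outer factor $r_iB_i$, leaving $\Lambda(O)\,\Jac_{u}(B_2,\dots,B_n)(O)=\kappa\neq0$. Your Step 3 spells out the identity $dB_2\wedge\dots\wedge dB_n=\Jac_{u}(B_2,\dots,B_n)\,du_2\wedge\dots\wedge du_n$, which the paper leaves implicit.
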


\begin{proof}
Every monomial $u^{\bb}$ spanning $V_i$ satisfies $\sum_jr_jb_j\ge r_i>0$, so $|\bb|\ge1$; hence $V_i$ contains no constant and lies in $(u_2,\dots,u_n)$, giving $B_i(O)=0$. Evaluating \cref{eq:nmaster} at $O$ kills every summand of the sum, leaving
\[
  \Lambda(O)\,\Jac_{u}(B_2,\dots,B_n)(O)=\kappa\neq0 .
\]
\end{proof}

\begin{theorem}\label{thm:nnoninj}
Let $G$ be as in \cref{eq:ntuple}, satisfying \cref{eq:nmaster}, with $\Lambda\notin\C$. Then $G$ is a counterexample to the Jacobian Conjecture.
\end{theorem}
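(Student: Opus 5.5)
The plan is to follow the proof of \cref{thm:autononinj} step by step, replacing the three-dimensional descent with its $n$-dimensional version from \cref{prop:ndescent} and \cref{thm:nmaster}. First, \cref{thm:nmaster} shows that $G$ is a Keller map, since $\mu=\kappa\in\C^{\times}$ gives $\detJ{G}=\pm\kappa$. In particular $G$ is étale and dominant. It then suffices to show that $G$ is not a polynomial automorphism. I will argue by contradiction, supposing that $G$ has a polynomial inverse. (Alternatively, I could first show $\deg G=\deg\Gbar$ by repeating \cref{prop:degree3}. That argument uses $\Omega=\Omsig(a_1)$, $K=K^{\tau}(a_1)$ with $a_1=x\Lambda$ transcendental over $\Omsig$, together with \cref{lem:basechange}. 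The Zariski main theorem step in \cref{thm:autononinj} then shows that $\deg G=1$ forces $G$ to be an automorphism, so everything reduces to excluding automorphisms.)

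Next I show that $G^{-1}$ descends. If $G$ is an automorphism, then applying $G^{-1}$ to $G\circ\sigt=\taut\circ G$ gives $G^{-1}\circ\taut=\sigt\circ G^{-1}$, exactly as in the last step of \cref{thm:rigid}. Hence $(G^{-1})^{*}$ preserves weights and maps $\C[x,y]^{\sigma}=\C[u_2,\dots,u_n]$ into $\C[a]^{\tau}=\C[P_2,\dots,P_n]$. On invariants it is inverse to $G^{*}$, so the restrictions are mutually inverse. It follows that $\Gbar\colon\A^{n-1}_u\to\A^{n-1}_P$ is a polynomial automorphism, and therefore $\Jac_u(P_2,\dots,P_n)$ is a nonzero constant.

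Then I apply the master identity. By \cref{thm:nmaster}, $\Jac_u(P_2,\dots,P_n)=\Lambda^{m-1}\mu=\kappa\Lambda^{m-1}$. Because $n\ge3$ and every $r_i\ge1$, we have $m\ge n-1\ge2$, so $m-1\ge1$. A nonzero constant equal to $\kappa\Lambda^{m-1}$ with $m-1\ge1$ forces $\Lambda$ to be constant, which contradicts $\Lambda\notin\C$. Hence $G$ is a Keller map that is not an automorphism. Since $\deg G\ge2$, it is also non-injective: by the injectivity theorem \cite{rudin} used in \cref{thm:rigid}, an injective polynomial self-map of $\C^n$ would be an automorphism.

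The only step needing care is the descent of $G^{-1}$. One must check that a weight-preserving comorphism restricted to weight zero really gives a two-sided inverse of $\Gbar^{*}$. This follows because $(G^{-1})^{*}\circ G^{*}=\id$ holds on all of $\C[a]$, and both maps preserve the weight-zero subrings. The remaining steps are direct transcriptions of the $n=3$ argument.
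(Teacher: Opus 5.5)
Your argument is correct and is essentially the paper's proof: you assume $G$ is an automorphism, observe that $G^{-1}$ is again graded so that $\Gbar$ is an automorphism of $\A^{n-1}$ with constant Jacobian, and then use $\Jac_u(P)=\kappa\Lambda^{m-1}$ with $m-1\ge1$ to force $\Lambda\in\C$. The one slip is cosmetic: your main line never establishes the phrase ``since $\deg G\ge2$'' in the last step, but the step does not need it, because non-injectivity already follows from $G$ not being an automorphism via the injectivity theorem you cite.
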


\begin{proof}
The argument of \cref{thm:autononinj} applies verbatim, Zariski's main theorem and the unit argument holding in every dimension: were $G$ injective it would be a polynomial automorphism, its inverse would again be graded, and $\Gbar$ would be an automorphism of $\A^{n-1}$, so $\Jac_{u}(P)$ would be a nonzero constant. By \cref{thm:nmaster} that constant is $\kappa\Lambda^{\,m-1}$, and $m\ge n-1\ge2$, so $\Lambda$ would be constant.
\end{proof}

The hypothesis $n\ge3$ enters only through $m\ge2$. For $n=2$ and $r_2=1$ the exponent $m-1$ vanishes and the argument gives nothing, consistently with \cref{thm:plane}, where the hyperbolic case in dimension two admits no counterexample.

Finally, fix a bound $\bd=(d_2,\dots,d_n,d_{\Lambda})$ and set
\[
  V_i(\bd)=V_i\cap\C[u]_{\le d_i},\qquad
  \VL(\bd)=\C[u]_{\le d_{\Lambda}},\qquad
  D=d_{\Lambda}+\sum_{i\ge2}d_i-(n-1).
\]
Writing $\mu=\sum_{\bb}\mu_{\bb}\,u^{\bb}$, each $\mu_{\bb}$ is a form of multidegree $(1,\dots,1)$ with coefficients in $\Z[r_2,\dots,r_n]$, by \cref{thm:nmaster}. The graded Keller counterexamples of this weight type and bounded degree are therefore the points of the open subscheme
\[
  \Yparo[\br,\bd]\;\subseteq\;
  \bP\bigl(V_2(\bd)\bigr)\times\dots\times\bP\bigl(V_n(\bd)\bigr)
  \times\bP\bigl(\VL(\bd)\bigr)
\]
on which $\mu_{\mathbf 0}\neq0$ and $\deg\Lambda\ge1$, cut out in the ambient product by the $\binom{D+n-1}{n-1}-1$ equations $\mu_{\bb}=0$ with $1\le|\bb|\le D$, and defined over $\Z$. The proofs of \cref{thm:points}, \cref{prop:determinantal} and \cref{prop:wps} use only the multilinearity of $\mu$ and go through unchanged: fixing all arguments but one leaves a linear map whose rank jump detects $\kappa\neq0$, and the one-parameter subgroup acting on the coefficient of $u^{\bb}$ by $t^{\,1+|\bb|}$ presents the whole space as a weighted projective space $\WP(\bomega)$ with positive weights $1+|\bb|$, well-formed under a condition on $\bd$ checked as in \cref{prop:wps}. The computation there generalises to
\[
  \mu_{\bb}\longmapsto t^{\,2n-1+|\bb|}\mu_{\bb},
\]
which is the weighted degree $5+|\bb|$ of \cref{prop:wps} when $n=3$. For $n=3$ all of this recovers \cref{sec-8}.

\subsection{Where the construction stops}

Two hypotheses are doing work, and neither can be dropped by the methods above.

The positive weight must equal $1$. It is this that makes the invariants $u_i=x^{r_i}y_i$ generate $\C[x,y]^{\sigma}$ freely, so that the quotient is again an affine space and $\Jac_{u}$ has its usual meaning. Already for $\bw=(2,-1,-1)$ the invariant ring is $\C[xy^{2},xyz,xz^{2}]$, a quadric cone, as noted in \cref{rem:exponent}; the graded pieces are then modules over a non-polynomial ring rather than monomial ideals, and the passage from \cref{eq:ntuple} to a linear-algebra problem fails at the first step.

There must be exactly one positive weight. This is what makes the weight-$1$ piece equal to $x\,\C[u]$, which produces the factor $\Lambda$ and with it the contracted locus $\{\Lambda=0\}$, the object around which all of \cref{sec-8} is organised. With two positive weights, say $\bw=(1,1,-1)$, the invariant ring $\C[xz,yz]$ is still free, so singularity of the quotient is not the obstruction; but the weight-$1$ piece contains both $x$ and $y$, and $y$ is not $x$ times a polynomial invariant. We do not know what replaces \cref{eq:ntuple} in that case.

Within the hyperbolic case the reduction is therefore governed by more than the signature: by the number of positive weights, and by the value of the positive one. A parameter variety for the remaining hyperbolic signatures would require different methods.


\section{Classification of graded Keller maps $\A^n_{\bw}\to\A^n_{\bq}$}
\label{sec:classify}

We now ask the question the previous sections have been preparing: for which data does a graded Keller counterexample exist? The data is a dimension, a weight vector, and a bound on the degrees, and the answer is organised as a descent through that data. At each stage we say what is settled and what is not; \cref{tab:classify} collects the outcome.

\subsection{The classification datum}
\label{subsec:datum}

Let $G\colon\A^n_{\bw}\to\A^n_{\bq}$ be a graded Keller map. By \cref{cor:keller-weights} the vector $\bq$ is a permutation of $\bw$, say $\bq=\pi(\bw)$. The permutation is not part of the datum. Indeed, by \cref{cor:keller-weights}(ii) the coordinate permutation $\pi$ is a graded isomorphism $\A^n_{\pi(\bw)}\to\A^n_{\bw}$, and composing $G$ with it changes neither the Keller property nor the property of being a polynomial automorphism; so the families attached to two permutations are identified. We may therefore always normalise $\bq=\bw$, as we did in \cref{eq:goodtype} below, and the reversal convention of \cref{sec-8} and the identity convention of \cref{sec:higher} differ only by this identification.

Three further reductions apply. We assume $\bw\ne 0$, the case $\bw=0$ imposing no condition at all and being the Jacobian Conjecture itself in dimension $n$. By \cref{cor:keller-weights}(iv) we may normalise $\gcd(w_1,\dots,w_n)=1$. Replacing $t$ by $t^{-1}$ replaces $\bw$ by $-\bw$ and changes neither $G$ nor its being graded, so we may fix the sign of any one nonzero weight. Finally, a permutation of the source coordinates carries $\A^n_{\bw}$ to $\A^n_{\bw'}$ for the correspondingly permuted $\bw'$, so only the multiset of weights matters.

Write
\[
  \Kfam(n,\bw)
\]
for the set of graded Keller maps $\A^n_{\bw}\to\A^n_{\bw}$ which are not polynomial automorphisms, taken up to the equivalence generated by the graded automorphisms of source and target; that is, up to the group $\Gaut^{\sharp}$ of \cref{sec-8}, whose image in $\Aut(\A^{n-1})$ is the group $\Gaut$ used in the parameter space. The classification problem is to decide when $\Kfam(n,\bw)$ is empty, and to describe it when it is not.

\subsection{First cut: signature and dimension}

\begin{theorem}\label{thm:firstcut}
Let $\bw\in\Z^n\setminus\{0\}$ be a weight vector, normalised as above.
\begin{enumerate}
\item If $\bw$ is elliptic, then $\Kfam(n,\bw)=\emptyset$, for every $n$.
\item If $n\le 2$, then $\Kfam(n,\bw)=\emptyset$, for every signature.
\end{enumerate}
\end{theorem}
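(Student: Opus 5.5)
Both parts reduce to results already proved in \cref{sec-3}; the proof is a matter of matching definitions, with a little care in part (2).

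For (1), let $[G]\in\Kfam(n,\bw)$ with $\bw$ elliptic. Then $G$ is a graded Keller map $\A^n_{\bw}\to\A^n_{\bw}$. By \cref{cor:keller-weights}(iii) the target weights are also elliptic. \cref{thm:rigid} then shows that $G$ is a polynomial automorphism, which contradicts membership in $\Kfam(n,\bw)$. Equivalence under $\Gaut^{\sharp}$ preserves the property of being an automorphism, so we may argue with any representative, and $\Kfam(n,\bw)=\emptyset$ for every $n$.

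For (2), split by dimension. If $n=1$ and $\bw=(w)$ with $w\ne0$, then after normalising $\gcd=1$ the vector is $(\pm1)$, which is elliptic, so part (1) applies. I will also give the direct argument, so that $n=1$ does not depend on any convention about which vectors are elliptic. A one-variable Keller map has $G'=\kappa\ne0$, so $G$ is affine, and an affine map with nonzero slope is invertible.

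If $n=2$, take a graded Keller map $G$ with nonzero source weights $\bw$ and target weights $\bq=\bw$. Let $\theta=\sigma$ and $\eta=\tau$ be the associated diagonal actions; these are algebraic $\Gm$-actions on $\A^2$. Because $\bw\ne0$, the action $\sigma$ is nontrivial. Since $G\circ\sigt=\taut\circ G$, the hypotheses of \cref{thm:plane} hold, and it follows that $G$ is a polynomial automorphism. Hence $\Kfam(2,\bw)$ is empty for elliptic, parabolic and hyperbolic $\bw$ alike.

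The only step needing care is checking that the reductions of \cref{subsec:datum} match the hypotheses of the cited theorems. \cref{thm:plane} requires only a nontrivial source action and makes no assumption of diagonality or of a particular permutation $\bq=\pi(\bw)$. \cref{thm:rigid} allows any elliptic $\bw$ after the sign flip. So normalising $\bq=\bw$ and $\gcd=1$ loses nothing, and the theorem follows.
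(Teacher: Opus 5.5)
Your proof is correct and is essentially the paper's: part (1) is \cref{thm:rigid}, the case $n=2$ is \cref{thm:plane} applied to the diagonal actions, and $n=1$ rests on the fact that a one-variable Keller map is affine with nonzero slope. The paper also uses $G(0)=0$ to make that map linear, which invertibility does not need, and your remark that $n=1$ already falls under part (1), since a nonzero weight vector of length one is elliptic, is a valid shortcut.
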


\begin{proof}
(1) is \cref{thm:rigid}. For (2) with $n=2$ this is \cref{thm:plane}. For $n=1$ the normalisation gives $\bw=(\pm1)$; a Keller map $\A^1\to\A^1$ has constant nonzero derivative, hence is affine, and gradedness with $w_1\neq0$ forces $G(0)=0$ by \cref{lem:grading}(ii), so $G$ is linear and invertible.
\end{proof}

What remains is $n\ge3$ with $\bw$ parabolic or hyperbolic.

\subsection{Second cut: the parabolic case}
\label{subsec:parabolic}

Let $\bw$ be parabolic, let $Z=\{i:w_i=0\}$ and $z=\#Z\ge1$, and let $\bw'$ denote the nonzero part, of length $n-z\ge1$. Since $\bq$ is a permutation of $\bw$, the target has $z$ zero weights as well. Two cases occur, according to the signature of $\bw'$, and both reduce to problems already named.

\begin{theorem}\label{thm:parabolic-elliptic}
Suppose the nonzero weights of $\bw$ are all of the same sign. Then $\Kfam(n,\bw)\neq\emptyset$ if and only if the Jacobian Conjecture fails in dimension $z$. In particular $\Kfam(n,\bw)=\emptyset$ when $z=1$.
\end{theorem}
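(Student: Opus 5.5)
The plan is to make the structure of a graded map explicit and reduce to \cref{thm:rigid} fibrewise. Using the normalisations of \cref{subsec:datum} (replace $t$ by $t^{-1}$, permute coordinates, and normalise $\bq=\bw$ via \cref{cor:keller-weights}(ii)), write the coordinates as $(x,y)=(x_1,\dots,x_{n-z},y_1,\dots,y_z)$. Here $\wt(x_i)=w_i>0$ and $\wt(y_k)=0$, so $\bw'=(w_1,\dots,w_{n-z})$ is elliptic with positive entries.

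\emph{Step 1: normal form.} A monomial $x^{\mathbf a}y^{\mathbf b}$ has weight $\sum a_iw_i\ge0$, with equality only when $\mathbf a=0$. Hence the weight-zero piece is $\C[y]$, and each weight-$w_i$ piece is a module over $\C[y]$ spanned by monomials in $x$ of weighted degree $w_i$. A graded map $G\colon\A^n_{\bw}\to\A^n_{\bw}$ is therefore
\[
  G(x,y)=\bigl(H(x,y),\,g(y)\bigr),\qquad g\colon\A^z\to\A^z ,
\]
where for each $y_0\in\C^z$ the map $H(\cdot,y_0)\colon\A^{n-z}_{\bw'}\to\A^{n-z}_{\bw'}$ is graded. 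The Jacobian matrix is block triangular, so
\[
  \detJ{G}=\det\bigl(\partial H/\partial x\bigr)\cdot\detJ{g}.
\]
Both factors are polynomials in $(x,y)$, and their product is a nonzero constant, so each factor is a nonzero constant. Thus $g$ is a Keller map in dimension $z$, and every $H(\cdot,y_0)$ is a graded Keller map with elliptic weights $\bw'$.

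\emph{Step 2: if the Jacobian Conjecture holds in dimension $z$, then $\Kfam(n,\bw)=\emptyset$.} Let $G\in\Kfam(n,\bw)$. By Step~1, $g$ is Keller, hence an automorphism by hypothesis. Suppose $G(x,y)=G(x',y')$. Then $g(y)=g(y')$ gives $y=y'$, and $H(x,y)=H(x',y)$ gives $x=x'$, because $H(\cdot,y)$ is bijective by \cref{thm:rigid}. So $G$ is injective, and by \cite{rudin} it is a polynomial automorphism, a contradiction. For $z=1$ the Jacobian Conjecture holds trivially, since a Keller map of $\A^1$ is affine; this gives the final assertion.

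\emph{Step 3: if the Jacobian Conjecture fails in dimension $z$, then $\Kfam(n,\bw)\neq\emptyset$.} Let $g$ be a Keller map of $\A^z$ that is not an automorphism, and set $G=(x,g(y))$. Each component has the correct weight, so $G$ is graded, and $\detJ{G}=\detJ{g}$ is a nonzero constant. If $G$ were a polynomial automorphism it would be bijective, forcing $g$ to be bijective. Then $g$ would be injective, hence an automorphism by \cite{rudin}, a contradiction. So $G\in\Kfam(n,\bw)$.

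The only point needing care is Step~1, specifically the claim that the weight-zero components depend on $y$ alone. This is what produces the block-triangular Jacobian, and it uses precisely that the nonzero weights share a sign. The rest is bookkeeping.
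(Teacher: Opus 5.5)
Your proof is correct and follows the paper's argument essentially step for step. The shared steps are the block-triangular normal form $G=(H(x,y),g(y))$, the splitting of the constant Jacobian into two constant factors, the fibrewise application of \cref{thm:rigid} combined with \cite{rudin}, and the converse construction $(x,g(y))$. The paper proves the full equivalence ``$G$ is an automorphism iff $g$ is'' using surjectivity of the fibre maps, whereas you prove only the direction you need; and your converse construction is graded for the given $\bw$ directly, rather than for $(1,\dots,1,0,\dots,0)$.
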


\begin{proof}
Replacing $t$ by $t^{-1}$ we may take the nonzero weights positive. Write the source coordinates as $(x,\zeta)$, with $x=(x_1,\dots,x_k)$ carrying the positive weights, $k=n-z$, and $\zeta=(\zeta_1,\dots,\zeta_z)$ the weight-zero ones, and the target coordinates likewise. A component of weight $0$ is an invariant, hence lies in $\C[\zeta]$; a component of positive weight has every monomial divisible by some $x_i$. So
\[
  G(x,\zeta)=\bigl(F(x,\zeta),\,h(\zeta)\bigr),
\]
and $\Jm{G}$ is block triangular with $\detJ{G}=\det(\partial F/\partial x)\cdot\det(\partial h/\partial\zeta)$. A product of two polynomials equal to a nonzero constant has both factors nonzero constants, so $h$ is a Keller map in $z$ variables and, for each fixed $\zeta_0$, the map $F(\cdot,\zeta_0)\colon\A^k\to\A^k$ is a Keller map, graded with elliptic weights $\bw'$. By \cref{thm:rigid} it is an automorphism of $\A^k$.

Hence $G$ is injective if and only if $h$ is: one direction is immediate, and if $h(\zeta)=h(\zeta')$ with $\zeta\neq\zeta'$ then, $F(\cdot,\zeta')$ being surjective, any $x$ admits an $x'$ with $F(x,\zeta)=F(x',\zeta')$. An injective polynomial self-map of $\C^n$ is an automorphism \cite{rudin}, so $G$ is an automorphism exactly when $h$ is. Conversely, a Keller map $h$ of $\A^z$ which is not an automorphism yields $G=(x,h(\zeta))$, graded for $\bw=(1,\dots,1,0,\dots,0)$ and not an automorphism. For $z=1$ a Keller map of $\A^1$ is affine.
\end{proof}

Suppose instead that $\bw'$ is hyperbolic. If $\bw$ has exactly one positive weight and that weight is $1$, so that
\[
  \bw=(1,-r_2,\dots,-r_n), \qquad r_i\ge0,
\]
then the whole of \cref{sec:higher} applies with $r_i\ge0$ in place of $r_i>0$. The invariants $u_i=x^{r_i}y_i$ still generate $\C[x,y]^{\sigma}$ freely, the weight-$1$ piece is still $x\,\C[u]$, and $V_i=\C[u]$ when $r_i=0$; \cref{prop:ndescent} and \cref{thm:nmaster} are unchanged, the summands of $\mu$ indexed by $r_i=0$ being zero. Only \cref{lem:nbasept} needs amending: $B_i(O)=0$ is available for those $i$ with $r_i>0$, which is all that its proof uses, and the conclusions $\Lambda(O)\neq0$ and $\Jac_{u}(B_2,\dots,B_n)(O)=\kappa/\Lambda(O)$ persist.

\begin{prop}\label{prop:parabolic-hyp}
Let $\bw=(1,-r_2,\dots,-r_n)$ with $r_i\ge0$ and $m=\sum_{i\ge2}r_i$. If $m\ge2$, then \cref{thm:nnoninj} holds verbatim: a solution of \cref{eq:nmaster} with $\Lambda\notin\C$ is a counterexample to the Jacobian Conjecture, and the family is parametrised by the scheme $\Yparo[\br,\bd]$ of \cref{sec:higher}. If $m=1$, that is if $\bw=(1,-1,0,\dots,0)$, the exponent $m-1$ vanishes, $\detJ{G}=\pm\Jac_{u}(P_2,\dots,P_n)$, and the Keller condition for $G$ is exactly the Keller condition for $\Gbar$ in dimension $n-1$, with no contracted locus, as in \cref{rem:vasyunin}.
\end{prop}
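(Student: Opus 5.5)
The plan is to check, with $r_i\ge0$ allowed, that every ingredient of \cref{thm:nnoninj} still holds, and then read the case $m=1$ directly off \cref{prop:ndescent}. I will go through the steps in the order the earlier argument uses them.

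\emph{Descent with zero exponents.} The invariants $u_i=x^{r_i}y_i$ still generate $\C[x,y]^{\sigma}$ freely, the weight-$1$ piece is $x\,\C[u]$, and $V_i=\C[u]$ when $r_i=0$. So \cref{eq:ntuple} describes every graded map of this type. The proof of \cref{prop:ndescent} carries over unchanged. The coordinate change $(x,y)\mapsto(x,u)$ is triangular with diagonal $(1,x^{r_2},\dots,x^{r_n})$, and a diagonal entry equals $1$ exactly when $r_i=0$. Hence
\[
\detJ{G}=\pm\Jac_u(P)/\Lambda^{m-1}.
\]
In \cref{thm:nmaster} we factor $\Lambda^{r_i-1}$ out of row $i$. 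When $r_i=0$ this factor is $\Lambda^{-1}$, and the row becomes $\Lambda\nabla B_i$ with no $\nabla\Lambda$ term. The identity
\[
\Jac_u(P)=\Lambda^{m-1}\mu
\]
therefore still holds as an identity in $\C(u)$, hence in $\C[u]$, and the summands of $\mu$ with $r_i=0$ vanish. Multilinearity of $\mu$ is untouched, so the construction of $\Yparo[\br,\bd]$ as the multilinear section cut out by $\mu_{\bb}=0$ and $\mu_{\mathbf0}\neq0$, $\deg\Lambda\ge1$ goes through verbatim, as does the bijection of \cref{thm:points}.

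\emph{Non-injectivity for $m\ge2$.} I follow \cref{thm:autononinj}. Suppose $G$ were injective. Then $G$ is an automorphism, by \cite{rudin} or by the Zariski main theorem argument given there. Its inverse is graded, by the last step of \cref{thm:rigid}. So $(G^{-1})^*$ carries $\C[a]^{\tau}=\C[P]$ onto $\C[u]$ and is inverse to $\Gbar^*$. Thus $\Gbar$ is an automorphism of $\A^{n-1}$, and $\Jac_u(P)$ is a nonzero constant. But this constant equals $\kappa\Lambda^{m-1}$ with $m-1\ge1$, which forces $\Lambda\in\C$, a contradiction. I also need to check that dominance and the degree identity $\deg G=\deg\Gbar$ do not use $r_i>0$. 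The argument of \cref{prop:degree3} uses only $\Omega=\Omsig(x\Lambda)$ and $K=\Ktau(a_1)$ together with \cref{lem:basechange}, so it does not. The one genuine amendment is \cref{lem:nbasept}: $B_i(O)=0$ is now known only for $r_i>0$. This is harmless, because evaluating $\mu$ at $O$ kills exactly the summands with $r_i>0$, and the others are identically zero. So $\Lambda(O)\Jac_u(B)(O)=\kappa$ persists.

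\emph{The case $m=1$.} Here exactly one $r_i$ equals $1$ and the rest are $0$, so $\bw=(1,-1,0,\dots,0)$ up to order. Then $\Lambda^{m-1}=1$, and \cref{prop:ndescent} gives
\[
\detJ{G}=\pm\Jac_u(P_2,\dots,P_n).
\]
Hence $G$ is Keller if and only if $\Gbar$ is Keller on $\A^{n-1}$, and the divisor $\{\Lambda=0\}$ imposes no vanishing condition.

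The main obstacle I expect is the bookkeeping in \cref{thm:nmaster} when $r_i=0$. There I would argue in $\C(u)$ and clear denominators, rather than factoring out negative powers of $\Lambda$ naively. Beyond that, I would double-check that the graded-inverse argument needs no positivity of weights; it does not, since it is pure equivariance.
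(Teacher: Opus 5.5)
Your proposal is correct and follows the same route as the paper. The paper's justification, in the paragraph before the statement, likewise observes that \cref{sec:higher} goes through with $r_i\ge0$: the invariants and graded pieces are unchanged, the summands of $\mu$ with $r_i=0$ vanish, only \cref{lem:nbasept} needs the amendment you make, and the case $m=1$ is read off \cref{prop:ndescent}. One small slip: $(G^{-1})^{*}$ carries $\C[u]=\C[x,y]^{\sigma}$ into $\C[a]^{\tau}=\C[P]$, not the other way round, but this does not affect the argument, since its restriction is still inverse to $\Gbar^{*}$.
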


So the parabolic case is not a separate difficulty: with an elliptic nonzero part it is the Jacobian Conjecture in dimension $z$, and with a hyperbolic nonzero part of the above type it is the theory of \cref{sec:higher}. What is left open is the parabolic case whose hyperbolic part falls outside that type, and that is the same restriction we meet next.

\subsection{Third cut: the shape of the hyperbolic quotient}
\label{subsec:shape}

Let $\bw$ be hyperbolic, and let $n_+$ be the number of positive weights. Replacing $\bw$ by $-\bw$ if necessary, assume $n_+\le n-n_+$. For $n=3$ this forces $n_+=1$: the case of two positive weights, such as $\bw=(1,1,-1)$, is carried by the substitution $t\mapsto t^{-1}$ and a coordinate permutation to $\bw=(1,-1,-1)$, which is the type \cref{eq:goodtype} with $(r,s)=(1,1)$ and is treated below. The first genuinely new configuration with $n_+\ge2$ therefore occurs in dimension four.

The construction of \cref{sec-8} and \cref{sec:higher} requires $n_+=1$ and the positive weight equal to $1$. The two hypotheses fail for different reasons.

If $n_+=1$ with positive weight $w_1=a\ge2$, the invariant ring is generated by the monomials $x^{c}y^{\bb}$ with $ca=\sum_i r_ib_i$, where $\bw=(a,-r_2,\dots,-r_n)$, and this semigroup need not be free. Already $\bw=(2,-1,-1)$ gives $\C[xy^2,xyz,xz^2]$, a quadric cone, as noted in \cref{rem:exponent}: the reduction then takes place on a singular quotient, where $\Jac_{u}$ has no elementary meaning and the graded pieces are modules over a non-polynomial ring rather than monomial ideals.

If $n_+\ge2$, so $n\ge4$, the weight-$1$ piece is no longer generated by a single element over the invariants. For $\bw=(1,1,-r_3,-r_4)$ it is $x_1R+x_2R$ with $R$ the invariant ring, since a monomial of weight $1$ is divisible by $x_1$ or by $x_2$ but need not be divisible by a prescribed one. The description \cref{eq:ntuple}, and with it the single factor $\Lambda$ and the contracted locus $\{\Lambda=0\}$ around which \cref{sec-8} is organised, has no analogue. The quotient is in general singular here as well: for $\bw=(1,1,-1,-1)$ the invariant ring is the affine cone over $\bP^1\times\bP^1$.

We therefore restrict to
\begin{equation}\label{eq:goodtype}
  \bw=(1,-r_2,\dots,-r_n), \qquad r_i>0, \qquad \bq=\bw,
\end{equation}
the setting of \cref{sec:higher}, and record the two omitted configurations as open. Within the hyperbolic case the classification is thus governed by more than the signature: by the number of positive weights, and by the value of the positive one.

\subsection{The two ends of the filtration}
\label{subsec:filtration}

Fix a type \cref{eq:goodtype}. Two constraints on a point of $\Yparo[\br,\bd]$ are visible without any computation, one at the top of the degree filtration and one at the bottom. We record both, since the emptiness proofs below use them together.

For the top, fix $\bd=(d_2,\dots,d_n,d_{\Lambda})$ and write $\Yeq[\br,\bd]\subseteq\Yparo[\br,\bd]$ for the locally closed subscheme of points whose data has these degrees \emph{exactly}, so that
\[
  \Yparo[\br,\bd]=\bigsqcup_{\bd'\le\bd,\ d'_{\Lambda}\ge1}\Yeq[\br,\bd'] .
\]
Write $\hat B_i$ and $\hat\Lambda$ for the leading homogeneous forms of $B_i$ and $\Lambda$ in the standard grading of $\C[u]$, and recall
\[
  D=d_{\Lambda}+\sum_{i\ge2}d_i-(n-1).
\]

\begin{theorem}\label{thm:leading}
Let $[G]\in\Yeq[\br,\bd]$. Then $\mu$ has degree at most $D$, and its degree-$D$ component is $\mu(\hat B_2,\dots,\hat B_n,\hat\Lambda)$. Since $D\ge1$, the Keller condition \cref{eq:nmaster} forces
\[
  \mu(\hat B_2,\dots,\hat B_n,\hat\Lambda)=0,
  \qquad\text{equivalently}\qquad
  \Jac_{u}\bigl(\hat B_2\hat\Lambda^{r_2},\dots,\hat B_n\hat\Lambda^{r_n}\bigr)=0 ,
\]
that is, the $n-1$ forms $\hat B_i\hat\Lambda^{r_i}$ are algebraically dependent over $\C$.
\end{theorem}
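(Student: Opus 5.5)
The argument is a degree count on $\mu$, using only its multilinearity and the way each summand differentiates once in each of $n-2$ rows. I would first bound the degree of each summand of $\mu$ in \cref{thm:nmaster}. In $\Lambda\,\Jac_{u}(B_2,\dots,B_n)$, each row $\nabla B_i$ has entries of degree at most $d_i-1$. The determinant therefore has degree at most $\sum_i(d_i-1)=\sum_i d_i-(n-1)$, and multiplying by $\Lambda$ gives degree at most $D$. In the $i$-th summand $r_iB_i\,\Jac_{u}(B_2,\dots,\Lambda,\dots,B_n)$, the determinant has degree at most $d_\Lambda-1+\sum_{j\ne i}(d_j-1)$, and multiplying by $B_i$ again gives at most $D$. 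Hence $\deg\mu\le D$.

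Next I would identify the degree-$D$ component. Decompose $B_i=\hat B_i+(\text{lower})$ and $\Lambda=\hat\Lambda+(\text{lower})$, with $\deg\hat B_i=d_i$ and $\deg\hat\Lambda=d_\Lambda$ exactly, since we are on $\Yeq[\br,\bd]$. Expand $\mu$ by multilinearity, which \cref{thm:nmaster} provides. Every term containing at least one lower-order piece has degree strictly less than $D$, by the same count as above. The only term of degree $D$ is therefore $\mu(\hat B_2,\dots,\hat B_n,\hat\Lambda)$, which is homogeneous of degree $D$, since each derivative of a form is a form of one lower degree.

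The standing hypothesis on $\Yparo$ is $\deg\Lambda\ge1$, so $d_\Lambda\ge1$. We also need every $d_i\ge1$, which holds because $V_i$ contains no constants (\cref{lem:nbasept}) and $B_i\ne0$: if some $B_i$ vanished, the Jacobian in \cref{eq:nmaster} at $O$ would be $0$. It follows that $D=d_\Lambda+\sum_i(d_i-1)\ge1$. Since $\mu=\kappa$ is constant and $D\ge1$, its degree-$D$ component vanishes, so $\mu(\hat B,\hat\Lambda)=0$.

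For the equivalent form, apply the identity of \cref{thm:nmaster} to the leading forms:
\[
\Jac_u\bigl(\hat B_2\hat\Lambda^{r_2},\dots,\hat B_n\hat\Lambda^{r_n}\bigr)=\hat\Lambda^{m-1}\mu(\hat B,\hat\Lambda).
\]
Because $\hat\Lambda\ne0$ and $\C[u]$ is a domain, the left side vanishes if and only if $\mu(\hat B,\hat\Lambda)=0$. Finally, in characteristic zero the Jacobian criterion (as used in \cref{prop:degree3}(1)) says that $n-1$ polynomials in $n-1$ variables are algebraically dependent exactly when their Jacobian vanishes identically.

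The main obstacle is only bookkeeping. One has to make sure no cancellation is needed among the degree-$D$ contributions: they all assemble into $\mu$ evaluated at the leading forms, and that is guaranteed by multilinearity together with homogeneity.
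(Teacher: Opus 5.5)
Your proposal is correct and follows essentially the same route as the paper. Both bound each summand of $\mu$ by $D$ and identify the degree-$D$ component as $\mu(\hat B_2,\dots,\hat B_n,\hat\Lambda)$. Both obtain $D\ge1$ from $\deg\Lambda\ge1$ and the fact that each nonzero $B_i$ lies in the maximal ideal, then apply the identity of \cref{thm:nmaster} to the leading forms and invoke the Jacobian criterion. Your explicit multilinear expansion and your check that each $d_i\ge1$ simply spell out steps the paper states in one line.
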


\begin{proof}
The summand $\Lambda\Jac_{u}(B_2,\dots,B_n)$ has degree at most $d_{\Lambda}+\sum_i(d_i-1)=D$, and the summand $r_iB_i\Jac_{u}(B_2,\dots,\Lambda,\dots,B_n)$ has degree at most
\[
  d_i+(d_{\Lambda}-1)+\sum_{j\neq i}(d_j-1)=D ,
\]
and in each the degree-$D$ part is obtained by replacing every argument by its leading form. Hence the degree-$D$ component of $\mu$ is $\mu(\hat B_2,\dots,\hat B_n,\hat\Lambda)$. Since $\deg\Lambda\ge1$ and each $B_i$ lies in the maximal ideal, $D\ge1$, while $\mu=\kappa$ has degree $0$; so that component vanishes. Applying \cref{thm:nmaster} to the forms gives
\[
  \mu(\hat B_2,\dots,\hat B_n,\hat\Lambda)
  =\hat\Lambda^{-(m-1)}\Jac_{u}\bigl(\hat B_2\hat\Lambda^{r_2},\dots,\hat B_n\hat\Lambda^{r_n}\bigr),
\]
and the last assertion is the Jacobian criterion for algebraic dependence in characteristic zero.
\end{proof}

For the bottom of the filtration, write $\check B_i$ for the initial form of $B_i$ at $O$.

\begin{lem}\label{lem:initial}
Let $[G]\in\Yparo[\br,\bd]$. Then each $B_i$ has order exactly $1$ at $O$, the linear forms $\check B_2,\dots,\check B_n$ are linearly independent, and
\[
  \Jac_{u}(\check B_2,\dots,\check B_n)=\kappa/\Lambda(O)\in\C^{\times}.
\]
For $n=3$, if moreover $r<s$ then $\check A\in\C v$ and $\check B$ has a nonzero $u$-coefficient.
\end{lem}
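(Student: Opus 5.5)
The plan is to evaluate the Jacobian of $(B_2,\dots,B_n)$ at $O$ and split it into linear parts. By \cref{lem:nbasept}, every $B_i$ vanishes at $O$, $\Lambda(O)\neq0$, and
\[
  \Lambda(O)\,\Jac_u(B_2,\dots,B_n)(O)=\kappa .
\]
The value at $O$ of $\Jac_u(B_2,\dots,B_n)$ is the determinant of the matrix of first-order Taylor coefficients. That matrix is the Jacobian matrix of the linear parts $\ell_i$ of the $B_i$, so $\Jac_u(\ell_2,\dots,\ell_n)=\kappa/\Lambda(O)\neq0$.

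Nonvanishing of this determinant forces every $\ell_i\neq0$; otherwise a row would be zero. Since $B_i(O)=0$, the order of $B_i$ at $O$ is therefore exactly $1$, and its initial form is $\check B_i=\ell_i$. The nonzero determinant also gives linear independence of the $\check B_i$, and it gives the displayed identity, which is constant because the $\check B_i$ are linear. This is the whole argument in general dimension, and it is routine once \cref{lem:nbasept} is available.

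For $n=3$ with $r<s$, I would read off the monomial description of $\VA$. The generator $u$ has $ri+sj=r<s$, so $u\notin\VA$, while $v\in\VA$. Any monomial of degree one in $\VA$ must therefore be $v$, so the linear part of $A$ lies in $\C v$, and $\check A\in\C v$ because $A$ has order $1$. Writing $\check A=\alpha v$ with $\alpha\neq0$ and $\check B=\beta_1u+\beta_2v$, the Jacobian becomes
\[
  \jb{\check B}{\check A}=\beta_1\alpha .
\]
This is nonzero by the first part, so $\beta_1\neq0$.

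I do not expect a real obstacle. The only point that needs care is the identification of $\Jac_u(B_2,\dots,B_n)(O)$ with the Jacobian of the linear parts. This holds because the derivatives at $O$ depend only on the linear terms, and constant terms are absent.
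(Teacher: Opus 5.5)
Your proposal is correct and is essentially the paper's own proof. Both arguments follow the same steps: \cref{lem:nbasept} gives $\Lambda(O)\,\Jac_u(B_2,\dots,B_n)(O)=\kappa$, the Jacobian at $O$ is the determinant of the linear parts, so these are nonzero and independent, and for $n=3$ with $r<s$ the degree-one part of $\VA$ is $\C v$, so that $\jb{\check B}{\check A}=\alpha\beta_1\neq0$.
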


\begin{proof}
By \cref{lem:nbasept}, $B_i(O)=0$ and $\Lambda(O)\Jac_{u}(B_2,\dots,B_n)(O)=\kappa\neq0$. The Jacobian at $O$ is the determinant of the matrix of linear parts, so those linear parts are independent and in particular nonzero, which is the statement about orders. For $n=3$ the degree-one part of $\VA=\Span\{u^iv^j:ri+sj\ge s\}$ is $\C v$ when $r<s$, so $\check A=a_1v$ with $a_1\neq0$, and $\jb{\check B}{\check A}=a_1\partial_u\check B\neq0$.
\end{proof}

Note that \cref{lem:initial} leaves no room for a lower-order analogue of \cref{thm:leading}: the initial forms are never degenerate.

\subsection{Dimension three: the leading system}
\label{subsec:leadingsystem}

For $n=3$ the forms are binary and \cref{thm:leading} becomes explicit. Write $(r_2,r_3)=(r,s)$, $B_2=B$, $B_3=A$ and $\bd=(d_1,d_2,d_3)$ as in \cref{sec-8}, and set
\[
  \nu_A=d_1+sd_3=\deg\bigl(\hat A\hat\Lambda^{s}\bigr), \qquad
  \nu_B=d_2+rd_3=\deg\bigl(\hat B\hat\Lambda^{r}\bigr), \qquad
  e=\gcd(\nu_A,\nu_B).
\]

\begin{theorem}\label{thm:binary}
Let $[G]\in\Yeq$, and let $\hat A,\hat B,\hat\Lambda$ be the leading forms, of degrees $d_1,d_2,d_3$. Then there are a binary form $h$ of degree $e$ and constants $\kappa_1,\kappa_2\in\C^{\times}$ with
\[
  \hat B\hat\Lambda^{r}=\kappa_1h^{\nu_B/e}, \qquad
  \hat A\hat\Lambda^{s}=\kappa_2h^{\nu_A/e} .
\]
Consequently $\hat A$, $\hat B$ and $\hat\Lambda$ are, up to constants, products of powers of the same linear forms: writing $h=\prod_{i=1}^{k}\ell_i^{m_i}$ with the $\ell_i$ pairwise non-proportional and $m_i\ge1$, one has $\hat\Lambda=\prod\ell_i^{\gamma_i}$, $\hat B=\prod\ell_i^{\beta_i}$, $\hat A=\prod\ell_i^{\alpha_i}$, where the nonnegative integers $\alpha_i,\beta_i,\gamma_i,m_i$ satisfy
\begin{equation}\label{eq:diophantine}
  \beta_i+r\gamma_i=\tfrac{\nu_B}{e}\,m_i, \qquad
  \alpha_i+s\gamma_i=\tfrac{\nu_A}{e}\,m_i,
\end{equation}
together with $\sum_i\alpha_i=d_1$, $\sum_i\beta_i=d_2$, $\sum_i\gamma_i=d_3$, $\sum_i m_i=e$.
\end{theorem}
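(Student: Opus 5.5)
The starting point is \cref{thm:leading} with $n=3$. It says that the two binary forms $f_1=\hat B\hat\Lambda^{r}$ and $f_2=\hat A\hat\Lambda^{s}$, homogeneous of degrees $\nu_B$ and $\nu_A$, satisfy $\Jacuv(f_1,f_2)=0$. Both forms are nonzero: the leading forms of nonzero polynomials are nonzero, and on $\Yeq$ the polynomials $A,B,\Lambda$ have the exact degrees $d_1,d_2,d_3$, with $A,B\neq0$ by \cref{lem:initial}. The plan is to convert the vanishing Jacobian of two homogeneous forms into a statement about a common root, and then read off exponents by unique factorisation in $\C[u,v]$.

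The first step is the classical lemma that two nonzero binary forms $f_1,f_2$ of degrees $\nu_1,\nu_2\ge1$ with $\jb{f_1}{f_2}=0$ satisfy $f_1^{\nu_2}=c\,f_2^{\nu_1}$ for a constant $c\neq0$.

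I would prove this directly. Euler's identity gives $u\partial_uf_i+v\partial_vf_i=\nu_if_i$. Combined with $\partial_uf_1\partial_vf_2=\partial_vf_1\partial_uf_2$, the $2\times2$ system shows that the gradients are proportional. It follows that $\nu_2f_2\,df_1=\nu_1f_1\,df_2$ as $1$-forms, since both sides evaluate on the Euler field to $\nu_1\nu_2f_1f_2$ and they are proportional. Hence $d\bigl(f_1^{\nu_2}/f_2^{\nu_1}\bigr)=0$ on the open set where $f_2\neq0$, so the quotient is constant. (This uses the field of rational functions in characteristic zero.)

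The degree hypothesis holds because $\nu_A,\nu_B\ge1$: indeed $d_3\ge1$ on $\Yparo$.

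Setting $\nu_1=\nu_B$, $\nu_2=\nu_A$ and $e=\gcd$, we get $f_1^{\nu_A/e}=c'f_2^{\nu_B/e}$ after extracting an $e$-th root, which is a constant multiple by unique factorisation. Since $\nu_A/e$ and $\nu_B/e$ are coprime, the next step is a standard UFD argument. Factor $f_1$ and $f_2$ into linear forms over $\C$. For each linear form $\ell$, its multiplicities satisfy $\tfrac{\nu_A}{e}\operatorname{ord}_\ell f_1=\tfrac{\nu_B}{e}\operatorname{ord}_\ell f_2$. Coprimality then forces $\operatorname{ord}_\ell f_1=\tfrac{\nu_B}{e}m_\ell$ and $\operatorname{ord}_\ell f_2=\tfrac{\nu_A}{e}m_\ell$ for a nonnegative integer $m_\ell$. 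Taking $h=\prod\ell^{m_\ell}$ yields $f_1=\kappa_1h^{\nu_B/e}$ and $f_2=\kappa_2h^{\nu_A/e}$. Comparing degrees gives $\deg h=e$, that is $\sum m_i=e$.

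For the consequence, every linear factor of $\hat\Lambda$, $\hat B$ or $\hat A$ divides $f_1$ or $f_2$, and hence divides $h$. So each of these forms is a product of powers of the $\ell_i$ up to a constant, with exponents $\gamma_i$, $\beta_i$, $\alpha_i$. Comparing the order of $\ell_i$ in $f_1=\hat B\hat\Lambda^r$ and in $f_2=\hat A\hat\Lambda^s$ gives exactly \cref{eq:diophantine}. The remaining sum conditions are the degree counts $d_1,d_2,d_3$. Constants are absorbed into $\kappa_1,\kappa_2$, or into the normalisation of the $\ell_i$.

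The only step requiring care is the lemma that a vanishing Jacobian forces $f_1^{\nu_2}\propto f_2^{\nu_1}$, in particular the degenerate possibility that one form has vanishing gradient; this cannot happen in characteristic zero, because $\nu_if_i\neq0$. Everything else is bookkeeping with unique factorisation.
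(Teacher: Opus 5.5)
Your proof is correct and follows essentially the same route as the paper: Euler's relation combined with $\jb{f}{g}=0$ gives $\nu_B f\,dg=\nu_A g\,df$ for $f=\hat B\hat\Lambda^{r}$ and $g=\hat A\hat\Lambda^{s}$. From this you get $f^{\nu_A}=c\,g^{\nu_B}$, and comparing multiplicities of linear factors, using that $\nu_A/e$ and $\nu_B/e$ are coprime, produces $h$ and the system \cref{eq:diophantine}. The $1$-form phrasing and the intermediate $e$-th root are only presentational differences; the root is not needed, since the multiplicity comparison can be made directly on $f^{\nu_A}=c\,g^{\nu_B}$.
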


\begin{proof}
Put $f=\hat B\hat\Lambda^{r}$ and $g=\hat A\hat\Lambda^{s}$, nonzero forms of degrees $\nu_B$ and $\nu_A$, satisfying $\jb{f}{g}=0$ by \cref{thm:leading}. Euler's relation gives $\nu_Bf=uf_u+vf_v$ and $\nu_Ag=ug_u+vg_v$; combining with $f_ug_v=f_vg_u$ yields $\nu_Bfg_u=\nu_Af_ug$ and $\nu_Bfg_v=\nu_Af_vg$, so the rational function $f^{\nu_A}/g^{\nu_B}$, of degree zero, has vanishing partial derivatives and is constant. Hence $f^{\nu_A}=cg^{\nu_B}$ for some $c\in\C^{\times}$.

Over $\C$ both forms split. Let $\ell_1,\dots,\ell_k$ be the distinct linear factors occurring in $f$ or in $g$, with multiplicities $\varphi_i$ in $f$ and $\gamma'_i$ in $g$. Comparing multiplicities, $\nu_A\varphi_i=\nu_B\gamma'_i$ for every $i$. Writing $\nu_A=e\nu_A'$ and $\nu_B=e\nu_B'$ with $\gcd(\nu_A',\nu_B')=1$, this reads $\nu_A'\varphi_i=\nu_B'\gamma'_i$, so $\nu_B'\mid\varphi_i$; set $m_i=\varphi_i/\nu_B'$, whence $\gamma'_i=\nu_A'm_i$ and $\sum_im_i=e$. With $h=\prod\ell_i^{m_i}$ we get $f=\kappa_1h^{\nu_B'}$ and $g=\kappa_2h^{\nu_A'}$. The system \cref{eq:diophantine} is the equality of multiplicities at $\ell_i$ in these two identities, and the four sums record the degrees.
\end{proof}

\begin{cor}\label{cor:empty}
If $e=1$, then every point of $\Yeq$ has $k=1$: the three leading forms are powers of a single linear form.
\end{cor}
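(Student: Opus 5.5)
The corollary should follow directly from \cref{thm:binary}, by reading off the degree of the form $h$. I will apply \cref{thm:binary} to a point $[G]\in\Yeq$ with $e=\gcd(\nu_A,\nu_B)=1$. This gives a binary form $h$ of degree $e=1$ and constants $\kappa_1,\kappa_2\in\C^{\times}$ with
\[
\hat B\hat\Lambda^{r}=\kappa_1h^{\nu_B},\qquad \hat A\hat\Lambda^{s}=\kappa_2h^{\nu_A}.
\]
A form of degree one is a nonzero linear form, say $h=\ell_1$. In the factorization $h=\prod_{i=1}^k\ell_i^{m_i}$ with every $m_i\ge1$, the constraint $\sum_i m_i=e=1$ then forces $k=1$ and $m_1=1$.

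Next I will use unique factorization in $\C[u,v]$. Every factor of $\kappa_1\ell_1^{\nu_B}$ is, up to a constant, a power of $\ell_1$. So $\hat\Lambda$ and $\hat B$ are powers of $\ell_1$ up to constants, and likewise $\hat A$ from the second identity. This is exactly the statement that the three leading forms are powers of a single linear form. It also matches the description in \cref{thm:binary}, where the exponents satisfy $\beta_1=\nu_B-r\gamma_1=d_2$, $\alpha_1=d_1$ and $\gamma_1=d_3$ by \cref{eq:diophantine}.

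There is no serious obstacle. The one point to check is that the forms are nonzero, so that the factorization makes sense. This holds because in $\Yeq$ the degrees are exact: $\hat A$, $\hat B$ and $\hat\Lambda$ are genuine nonzero leading forms, with $d_3\ge1$. The argument also covers the degenerate case where some $d_i=0$, since a constant counts as the zeroth power of $\ell_1$.
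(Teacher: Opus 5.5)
Your proposal is correct and follows the paper's argument: the paper's proof is the single observation $k\le\sum_i m_i=e$ coming from \cref{thm:binary}. Your extra unique-factorization step only spells out why $\hat A$, $\hat B$ and $\hat\Lambda$ are then, up to constants, powers of the single form $\ell_1$.
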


\begin{proof}
$k\le\sum_im_i=e$.
\end{proof}

\begin{rem}\label{rem:notempty}
\cref{thm:binary} is a stratification of the leading data, not by itself an emptiness criterion. Indeed \cref{eq:diophantine} always admits the solution $k=1$, $m=(e)$, $\gamma=(d_3)$, for which $\beta_1=\nu_B-rd_3=d_2$ and $\alpha_1=\nu_A-sd_3=d_1$ and the inequalities of \cref{eq:search} reduce to $d_1,d_2\ge0$. Emptiness must therefore be proved stratum by stratum, by combining \cref{thm:binary} with \cref{lem:initial} or with the linear algebra of \cref{prop:determinantal}. Both routes are taken below.
\end{rem}

\subsection{Three applications}
\label{subsec:applications}

\begin{exa}\label{ex:Fleading}
For \cref{eq:map} one has $r=1$, $s=2$, $\bd=(4,3,1)$, hence $\nu_B=4$, $\nu_A=6$, $e=2$. From $A=(1+u)M$, $B=u+3M$ and $\Lambda=L$ one computes the leading forms
\[
  \hat A=u^{3}(3u+v), \qquad \hat B=3u^{2}(3u+v), \qquad \hat\Lambda=-(3u+v),
\]
so that $\hat B\hat\Lambda=-3u^{2}(3u+v)^{2}$ and $\hat A\hat\Lambda^{2}=u^{3}(3u+v)^{3}$, and indeed
\[
  (\hat B\hat\Lambda)^{6}=729\,(\hat A\hat\Lambda^{2})^{4}.
\]
Here $h=u(3u+v)$, with $\ell_1=u$, $\ell_2=3u+v$ and $m=(1,1)$, and \cref{eq:diophantine} reads
\[
  \beta=(2,1), \qquad \gamma=(0,1), \qquad \alpha=(3,1),
\]
which satisfies $\beta_i+\gamma_i=2m_i$ and $\alpha_i+2\gamma_i=3m_i$ for $i=1,2$. The leading data of the announced counterexample is thus the solution of a small system of linear equations in nonnegative integers, and not an accident.
\end{exa}

The next result is the first place where the two ends of the filtration are used together. It supersedes the computation of \cref{rem:weightoperator} below, which treated $(r,s)=(1,1)$ only.

\begin{theorem}\label{prop:oneone}
Let $r,s>0$ and $d_3\ge1$. Then $\Yparo[r,s,(1,1,d_3)]=\emptyset$. Equivalently, no graded Keller counterexample of type \cref{eq:goodtype} in dimension three has both $A$ and $B$ of degree one in the invariants.
\end{theorem}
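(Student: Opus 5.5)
The plan is to show that degree at most one forces $A$ and $B$ to be independent linear forms. The master equation \cref{eq:master} then becomes a first-order linear PDE for $\Lambda$ of Euler type, whose only polynomial solutions with constant right-hand side are constants. That contradicts the requirement $\deg\Lambda\ge1$ built into $\Yparo$.

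\emph{Step 1: $A$ and $B$ are independent linear forms.} Take a point of $\Yparo[r,s,(1,1,d_3)]$ and lift it to a triple $(A,B,\Lambda)$ as in \cref{thm:points}. By \cref{lem:nbasept} (equivalently \cref{lem:basept}), $A(O)=B(O)=0$. Since $\deg A,\deg B\le1$, both are therefore homogeneous linear forms. By \cref{lem:initial} they have order exactly $1$ at $O$, and $\jb{B}{A}=c$ is a nonzero constant. In particular $(p,q):=(B,A)$ is a linear coordinate system on $\A^2_{u,v}$.

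\emph{Step 2: rewrite \cref{eq:master} in the coordinates $(p,q)$.} For any $f,g$ the bracket transforms by the constant Jacobian, $\jb{f}{g}=c\,(f_pg_q-f_qg_p)$, exactly as in the transformation rule recorded in \cref{rem:normalize}. With $f=p$ or $g=q$ this gives
\[
  \jb{B}{\Lambda}=c\,\Lambda_q,\qquad \jb{\Lambda}{A}=c\,\Lambda_p,\qquad \jb{B}{A}=c .
\]
Hence \cref{eq:master} becomes
\[
  \Lambda+s\,q\,\Lambda_q+r\,p\,\Lambda_p=\kappa/c .
\]
No condition from $\VA,\VB$ is needed here: the equation is a polynomial identity, and the linear change of variables rescales all three terms by the same constant.

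\emph{Step 3: solve the Euler-type equation.} The operator $E=1+r\,p\,\partial_p+s\,q\,\partial_q$ is diagonal on monomials, with $E(p^iq^j)=(1+ri+sj)\,p^iq^j$. Since $r,s>0$, every eigenvalue is a positive integer, so $E$ is injective and preserves each monomial line. Comparing coefficients in $E\Lambda=\kappa/c$ therefore forces every coefficient of $\Lambda$ with $(i,j)\neq(0,0)$ to vanish, and gives $\Lambda=\kappa/c$. Thus $\Lambda$ is a nonzero constant, contradicting $\deg\Lambda\ge1$ in the definition of $\Yparo$. So the scheme has no points over any field of characteristic zero, in particular over $\C$, which proves both formulations of the statement.

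The only step requiring care is Step 1: the whole argument rests on $A$ and $B$ being \emph{exactly} linear and independent, not merely of degree at most one. \cref{lem:nbasept} and \cref{lem:initial} supply this, since vanishing at $O$ removes the constant terms and the Jacobian condition at $O$ gives independence. After that, Steps 2 and 3 are routine. As a cross-check, this is the same mechanism as the leading-form analysis of \cref{thm:leading}, now applied at every degree at once because $\hat A=A$ and $\hat B=B$.
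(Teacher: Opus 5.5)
Your proof is correct, and it takes a genuinely different route from the paper's.

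The paper passes to the exact-degree stratum and uses \cref{lem:initial} to see that $\hat A=\check A$ and $\hat B=\check B$ are non-proportional. It then invokes the leading-form factorisation of \cref{thm:binary}, and the Diophantine system \cref{eq:diophantine} yields $m_i(r-s)=re$ and $m_j(s-r)=se$, which cannot hold together. You instead use that two independent linear forms are global linear coordinates. In those coordinates \cref{eq:master} becomes the weighted Euler equation $\Lambda+rp\Lambda_p+sq\Lambda_q=\kappa/c$, which is diagonal on monomials with eigenvalues $1+ri+sj>0$. This is the computation of \cref{rem:weightoperator}, transported from $(r,s)=(1,1)$ to arbitrary $(r,s)$ by a linear change of variables, so that direct method in fact covers every signature.

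Your route buys three things:
\begin{itemize}
\item It is self-contained: it needs neither \cref{thm:leading} nor \cref{thm:binary}, nor splitting binary forms over an algebraic closure.
\item It describes all solutions in this degree range rather than only excluding $\deg\Lambda\ge1$. They are exactly the independent linear forms $A\in\VA$, $B\in\VB$ with $\Lambda=\kappa/\jb{B}{A}$.
\item It shows exactly where characteristic zero is used, namely in the invertibility of the integers $1+ri+sj$.
\end{itemize}
That restriction is genuine, and you were right to state your conclusion only over fields of characteristic zero. For $r=s=1$ over $\F_p$, the triple $A=v$, $B=u$, $\Lambda=1+u^{p-1}$ gives $\mu=1+p\,u^{p-1}=1$. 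So $\Yparo[1,1,(1,1,d_3)]$, which is defined over $\Z$, has $\F_p$-points as soon as $d_3\ge p-1$. The paper's proof uses characteristic zero only implicitly, in the argument of \cref{thm:binary}.

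The paper's route, in turn, is a worked instance of the leading-form and initial-form analysis that also drives \cref{prop:konestratum}. It uses only the top and bottom homogeneous parts of $A$ and $B$. Yours rests on $A$ and $B$ being coordinates, which has no analogue once either has degree two or more.
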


\begin{proof}
Since $A\in\VA$ and $B\in\VB$ contain no constants, a point of $\Yparo[r,s,(1,1,d_3)]$ lies in $\Yeq[r,s,(1,1,d_3')]$ for some $1\le d_3'\le d_3$; so we may assume $\deg A=\deg B=1$ exactly. Then $\hat A=\check A$ and $\hat B=\check B$, and \cref{lem:initial} gives $\jb{\hat B}{\hat A}\neq0$, so $\hat A$ and $\hat B$ are non-proportional linear forms.

By \cref{thm:binary} we have $\hat A=\prod\ell_i^{\alpha_i}$ and $\hat B=\prod\ell_i^{\beta_i}$ with $\sum\alpha_i=\sum\beta_i=1$. So there are indices $i$ and $j$ with $\alpha_i=1$, $\beta_j=1$ and all other $\alpha$'s and $\beta$'s zero, and non-proportionality forces $i\neq j$. Substituting into \cref{eq:diophantine},
\[
  \tfrac{\nu_B}{e}m_i=r\gamma_i, \qquad
  \tfrac{\nu_A}{e}m_i=s\gamma_i+1, \qquad
  \tfrac{\nu_B}{e}m_j=r\gamma_j+1, \qquad
  \tfrac{\nu_A}{e}m_j=s\gamma_j .
\]
Eliminating $\gamma_i$ from the first pair gives $m_i(r\nu_A-s\nu_B)=re$, and eliminating $\gamma_j$ from the second pair gives $m_j(s\nu_B-r\nu_A)=se$. Since $d_1=d_2=1$ we have $r\nu_A-s\nu_B=rd_1-sd_2=r-s$, so
\[
  m_i(r-s)=re>0 \qquad\text{and}\qquad m_j(s-r)=se>0 ,
\]
using $m_i,m_j\ge1$ and $e\ge1$. These require $r>s$ and $s>r$ at once, a contradiction.
\end{proof}

\begin{rem}\label{rem:weightoperator}
For $(r,s)=(1,1)$ the conclusion can be seen directly, and the computation exhibits an operator worth recording. Write $A=a_1u+a_2v$, $B=b_1u+b_2v$ and $\delta=b_1a_2-b_2a_1$. Then $\jb{B}{A}=\delta$ and $Ab_1-Ba_1=\delta v$, $Ba_2-Ab_2=\delta u$, so \cref{eq:master} with $r=s=1$ collapses to
\[
  \mu=\delta\bigl(\Lambda+u\Lambda_u+v\Lambda_v\bigr)
     =\delta\sum_{i,j}(1+i+j)\Lambda_{ij}u^iv^j .
\]
If $\delta=0$ then $\mu=0$, excluded; if $\delta\neq0$ then $\mu$ is constant only if $\deg\Lambda=0$, again excluded. The operator appearing here is the weight operator of \cref{prop:wps}, whose eigenvalue on the coefficient of $u^iv^j$ is the weight $1+i+j$ of the weighted projective space $\WP(\bomega)$. We do not know whether this is more than a coincidence.
\end{rem}

By \cref{prop:oneone} the smallest cases left open in dimension three are $(r,s)=(1,1)$ with $\bd=(2,1,d_3)$ and $\bd=(1,2,d_3)$; whether $\Kfam(3,(1,-1,-1))$ is empty we do not know.

\subsection{Constructing counterexamples}
\label{subsec:search}

\cref{thm:binary} turns the construction of a graded counterexample of given exact degrees into three steps, the first finite and the third linear. We describe them, and then carry them out at the degrees of \cref{eq:map}.

\medskip
\noindent\textbf{Step 1: the integer enumeration.}
Fix $(r,s)$ and $\bd=(d_1,d_2,d_3)$, and let $\nu_A,\nu_B,e$ be as above. Summing the two families in \cref{eq:diophantine} over $i$ gives $\sum_i\beta_i=\nu_B-rd_3=d_2$ and $\sum_i\alpha_i=\nu_A-sd_3=d_1$, so the degree conditions on $\alpha$ and $\beta$ hold automatically. The data to be enumerated is only the pair $(m,\gamma)$, and \cref{eq:diophantine} reduces to
\begin{equation}\label{eq:search}
  \sum_i m_i=e, \qquad \sum_i \gamma_i=d_3, \qquad
  \tfrac{\nu_B}{e}m_i\ge r\gamma_i, \qquad \tfrac{\nu_A}{e}m_i\ge s\gamma_i ,
\end{equation}
in nonnegative integers, with $\alpha_i=\tfrac{\nu_A}{e}m_i-s\gamma_i$ and $\beta_i=\tfrac{\nu_B}{e}m_i-r\gamma_i$. If $m_i=0$ then $\beta_i=-r\gamma_i\ge0$ forces $\gamma_i=0$ and hence $\alpha_i=\beta_i=0$, so $\ell_i$ does not occur; we may assume $m_i\ge1$, whence $1\le k\le e$. The enumeration is over compositions of $e$ into $k$ positive parts and of $d_3$ into $k$ nonnegative parts, a finite list, and it produces a finite set of strata covering $\Yeq$. By \cref{rem:notempty} the list is never empty, so this step does not by itself decide emptiness; what it does is replace one search by finitely many much smaller ones.

\medskip
\noindent\textbf{Step 2: normalising the leading forms.}
A solution of \cref{eq:search} determines $\hat A,\hat B,\hat\Lambda$ up to the choice of the $k$ pairwise non-proportional linear forms $\ell_i$ and of scalars. The scalars are absorbed by the $\Gm^{3}$-action of \cref{thm:points}. The linear forms are acted on by the linear part of $\Gaut$, which depends on $(r,s)$: the degree-one part of $\VA$ is $\C v$ when $r<s$ and $\C u+\C v$ when $r\ge s$, and that of $\VB$ is $\C u$ when $r>s$ and $\C u+\C v$ when $r\le s$. Hence the linear part of $\Gaut$ is
\[
  \GL_2 \ \text{ if } r=s, \qquad \text{a Borel subgroup of } \GL_2 \ \text{ if } r\neq s .
\]
Modulo scalars this is $\mathrm{PGL}_2$, acting with finitely many orbits on configurations of $k\le3$ points of $\bP^1$, respectively the affine group $\A^1\rtimes\Gm$, acting with finitely many orbits on configurations of $k\le2$ points. So the leading data carries no continuous modulus when $k\le 3$ (case $r=s$) or $k\le2$ (case $r\neq s$).

\medskip
\noindent\textbf{Step 3: the lower-order terms.}
With the leading forms fixed, what remains is to solve \cref{eq:master} for the lower-order coefficients of $A$, $B$ and $\Lambda$. This is the situation of \cref{prop:determinantal}: fixing $B$ and $\Lambda$, the equation is linear in $A$, and a solution with $\kappa\neq0$ exists exactly when
\[
  \rank\TBL=\rank(\pi\circ\TBL)+1 .
\]
The search space is much smaller than the ambient $\bP(\VBd)\times\bP(\VLd)$ of \cref{sec-8}, since the leading coefficients of $B$ and $\Lambda$ are prescribed by Step~1 and normalised by Step~2.

\medskip
The scheme $\Yparo$ is defined over $\Z$, so Steps 1--3 may be carried out over $\F_p$. Here some care is needed: a $\Q$-point of $\Yparo$ spreads out to a $\Z$-section of $\Ypar$, but the open conditions $\mu_{00}\neq0$ and $\deg\Lambda\ge1$ may fail on reduction, and the primes at which they fail depend on the point. So $\Yparo(\F_p)=\emptyset$ for a single $p$ does not prove $\Yparo(\Q)=\emptyset$; what it proves is that every $\Q$-point degenerates at $p$. Emptiness over $\Q$ follows only from emptiness over $\F_p$ for all $p$ outside an effectively bounded set, or from emptiness over $\overline{\Q}$, for which the elimination over algebraically closed fields recorded in \cref{sec-8} applies.

\begin{exa}\label{ex:enumerate}
Take $(r,s)=(1,2)$ and $\bd=(4,3,1)$, the degrees of \cref{eq:map}. Then $\nu_B=4$, $\nu_A=6$, $e=2$, so $\nu_B/e=2$ and $\nu_A/e=3$, and \cref{eq:search} reads
\[
  \sum_i m_i=2, \qquad \sum_i \gamma_i=1, \qquad 2m_i\ge \gamma_i, \qquad 3m_i\ge 2\gamma_i .
\]
Since $k\le e=2$, there are exactly two solutions up to relabelling:
\[
  \begin{array}{c|cccc}
    k & m & \gamma & \beta & \alpha \\\hline
    1 & (2) & (1) & (3) & (4) \\
    2 & (1,1) & (0,1) & (2,1) & (3,1)
  \end{array}
\]
The second is realised by \cref{eq:map}, as computed in \cref{ex:Fleading}, with $\ell_1=u$ and $\ell_2=3u+v$. The first has all three leading forms equal to powers of a single linear form $\ell$,
\[
  \hat\Lambda=\ell, \qquad \hat B=\ell^{3}, \qquad \hat A=\ell^{4},
\]
up to constants. In both solutions $k\le2$, so by Step~2 the leading data carries no continuous modulus. This is consistent with, but weaker than, the possibility raised in \cref{rem:orbit} that $\Yparo[1,2,(4,3,1)]$ is a single $\Gaut$-orbit: by \cref{prop:determinantal} the fibres of $\Ypar\to\bP(\VBd)\times\bP(\VLd)$ are linear spaces, so a family may well move while its leading data stays fixed. What Step~2 does exclude is a family along which the leading data itself varies; by $k\le e$, such a family requires $e\ge3$ when $r\neq s$, and $e\ge4$ when $r=s$.
\end{exa}

\begin{prop}\label{prop:konestratum}
Let $(r,s)=(1,2)$ and $\bd=(4,3,1)$. Then the stratum $k=1$ of $\Yeq[1,2,(4,3,1)]$ is empty. By \cref{ex:enumerate} every point of $\Yeq[1,2,(4,3,1)]$ therefore has $k=2$, the leading configuration of \cref{eq:map}.
\end{prop}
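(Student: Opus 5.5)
The plan is to work in coordinates adapted to the single linear form $\ell$ and push the leading-term analysis of \cref{thm:leading} one or two degrees further down. Here the top-degree information is exhausted, so the contradiction has to come from the next homogeneous components of $\mu$, combined with the constraints at $O$ from \cref{lem:initial}.

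\emph{Setup.} On the stratum $k=1$ we have $\hat\Lambda=\ell$, $\hat B=\beta\ell^{3}$ and $\hat A=\alpha\ell^{4}$ with $\alpha,\beta\neq0$, after rescaling $\Lambda$. Since $\deg\Lambda=1$, we can write $\Lambda=\lambda_0+\ell$, and \cref{lem:nbasept} gives $\lambda_0\neq0$. Choose a linear form $m$ complementary to $\ell$ and use $(p,q)=(\ell,m)$ as coordinates on $\A^2$. By the transformation rule for $\jb{\cdot}{\cdot}$ in \cref{rem:normalize}, \cref{eq:master} is preserved up to a nonzero constant factor, so it becomes
\[
  (\lambda_0+p)\,\jb{B}{A}_{p,q}\;-\;2A\,B_q\;+\;B\,A_q=\kappa' ,
\]
using $\jb{B}{\Lambda}=-B_q$ and $\jb{\Lambda}{A}=A_q$. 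The useful point is that $\Lambda$ now involves only $p$.

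\emph{The cascade.} Write $A=\alpha p^{4}+A_3+A_2+A_1$ and $B=\beta p^{3}+B_2+B_1$ in homogeneous components; there is no $A_0$ or $B_0$, and $A_1\in\C v$ by \cref{lem:initial}. The degree-$6$ part of the left side vanishes automatically by \cref{thm:leading}. I would then write out the degree-$5$, $4$, $3,\dots$ components in turn. Each is linear in the unknown components of the next lower degree, with coefficients built from $p^{3}$, $p^{4}$ and $\lambda_0$. The pure-$p$ leading forms make these equations very rigid: the degree-$5$ equation involves $\alpha p^{4}$, $\beta p^{3}$, $A_3$, $B_2$ and only the $p$-part $p\cdot\jb{\cdot}{\cdot}$ of the $\Lambda$-term. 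I expect it to force $\partial_q A_3$ and $\partial_q B_2$ to be tied to multiples of $p^{3}$ and $p^{2}$ respectively. Continuing downward, the expected outcome is that the $q$-dependence of $A$ and $B$ cannot reach the low-degree terms in the way \cref{lem:initial} requires. That lemma needs $\jb{B_1}{A_1}\neq0$, so some $q$-derivative must survive in degree one. A cleaner way to organise this is a weighted grading with $\wt(p)=1$ and $\wt(q)=1+\varepsilon$. Under it, every monomial involving $q$ has strictly lower total degree than the pure-$p$ leading term. One then tracks the top weighted component of the $q$-dependent part of $(A,B)$ and shows that its contribution to $\mu$ cannot cancel against anything.

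\emph{Endgame and main obstacle.} The goal is to show that the cascade forces either $\jb{B}{A}(O)=0$, so $\kappa=0$ by \cref{lem:nbasept}, or $A_1\notin\C v$, either of which contradicts \cref{lem:initial}. If the symbolic cascade becomes unwieldy, the fallback is \cref{prop:determinantal}. The linear part of $\Gaut$ is a Borel subgroup and acts on $\ell$, so $\ell$ has finitely many normal forms, essentially $\ell\in\C v$ or $\ell$ with a nonzero $u$-coefficient normalised to $u$. In each case the leading coefficients of $B$ and $\Lambda$ are fixed. It then remains to check $\rank\TBL=\rank(\pi\circ\TBL)$ over the remaining low-degree coefficients of $B$, a bounded linear-algebra computation in the $13$-dimensional space $\VAd$. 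The main obstacle is precisely this middle step. Nothing conceptual forces the lower components to die, so the argument must either identify the right weighted degree at which an uncancellable term appears, or carry out the rank computation uniformly in the free coefficients of $B$.
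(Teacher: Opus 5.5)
There is a genuine gap: the proposal stops where the proof has to begin. Your setup is sound. On the stratum $k=1$ you may take $\Lambda=\lambda_0+\ell$ with $\lambda_0\neq0$, and a linear change of coordinates keeps the shape of \cref{eq:master}, multiplying $\kappa$ by a nonzero constant. But, as you say yourself, no contradiction is ever derived. The weighted grading is never used to exhibit an uncancellable term. The rank fallback is not a single bounded computation: it is a determinantal problem that must hold uniformly in the free lower coefficients of $B$, and it is only described.

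The total-degree cascade is also the wrong filtration. The constant $\lambda_0$ couples consecutive homogeneous components. The first nontrivial level, degree $5$, yields only $2\beta\,\partial_qA_3=3\alpha\,p\,\partial_qB_2$, which has solutions with genuine $q$-dependence. From degree $3$ down, the unknowns enter through products such as $\jb{B_2}{A_3}$, so nothing forces a contradiction level by level. Your intended endgame, $\jb{B}{A}(O)=0$ or $A_1\notin\C v$, also aims at the wrong end. The contradictions that close the argument use the exact top degrees (compare \cref{rem:substrata}), not just the initial data of \cref{lem:initial}. Finally, an arbitrary complementary form $m$ does not preserve $\VA=(u^2,v)$ and $\VB=(u,v)$, and those constraints are needed.

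You already isolated the key feature, that $\Lambda$ involves only the $\ell$-variable. The missing idea is to exploit it by expanding in the variable \emph{transverse} to $\ell$, with coefficients that are polynomials in the $\ell$-variable.

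First normalise by the Borel linear part of $\Gaut$, so that $\ell=u$ or $\ell=v$ and $\Lambda=U=u+1$ or $V=v+1$. For $\ell=u$ write $A=\sum_j\alpha_jv^j$ and $B=\sum_j\beta_jv^j$. The leading forms give $\deg\alpha_0=4$, $\deg\beta_0=3$, $\deg\alpha_j\le3-j$ and $\deg\beta_j\le2-j$, so $N=\deg_vA\le3$ and $M=\deg_vB\le2$. Because $\Lambda$ depends on $u$ alone, the coefficient of $v^{M+N-1}$ in \cref{eq:master} is a first-order equation:
\[
  U\bigl(Nf'g-Mfg'\bigr)+(N-2M)fg=0,\qquad f=\beta_M,\ g=\alpha_N .
\]
This integrates to $f^N=C\,U^{2M-N}g^M$, so $N\deg f-M\deg g=2M-N$. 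With the degree bounds, only $(M,N)=(1,1)$ and $(1,2)$ survive; $M=0$ is excluded directly.

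In each surviving case the next $v$-coefficient is an exact derivative. For $(1,2)$ it is $2g(U\beta_0)'=b(U\alpha_1)'$. Integrating and evaluating at $U=0$ gives $2g\beta_0=b\alpha_1$, impossible since $\deg\beta_0=3>2\ge\deg\alpha_1$. For $(1,1)$ the same integration forces $\kappa=0$ at $O$.

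The case $\ell=v$ runs the same way in the variable $u$. There one uses $\alpha_1(0)=0$, which comes from $A\in(u^2,v)$, to exclude $(M,N)=(1,1)$. This is the step your plan lacks.
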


\begin{proof}
Suppose $[G]\in\Yeq[1,2,(4,3,1)]$ has $k=1$, so that by \cref{thm:binary} the leading forms of $A,B,\Lambda$ are, up to constants, $\ell^{4}$, $\ell^{3}$ and $\ell$ for a single linear form $\ell$.

\emph{Normalisation.} By Step~2 the linear part of $\Gaut$ consists of the invertible $\varphi$ with $\varphi^{*}u=a_{11}u+a_{12}v$ and $\varphi^{*}v=a_{22}v$, since $r<s$. Such a $\varphi$ preserves all three degrees and carries $\ell$ to $\varphi^{*}\ell$. Writing $\ell=c_uu+c_vv$, if $c_u\neq0$ the choice $a_{12}=-c_va_{22}/c_u$ gives $\varphi^{*}\ell\propto u$, while if $c_u=0$ then $\varphi^{*}\ell\propto v$ for every $\varphi$. So we may assume $\ell=u$ or $\ell=v$. In the first case $\Lambda=\lambda_{10}u+\lambda_{00}$ with $\lambda_{10}\neq0$ and $\lambda_{00}=\Lambda(O)\neq0$ by \cref{lem:basept}; applying the diagonal element $u\mapsto\mu u$, $v\mapsto\nu v$ of $\Gaut$ and the scaling $\Lambda\mapsto\gamma\Lambda$ of \cref{thm:points} with $\gamma=\lambda_{00}^{-1}$ and $\mu=\lambda_{00}/\lambda_{10}$, we may take $\Lambda=U:=u+1$. The second case is normalised in the same way to $\Lambda=V:=v+1$.

\emph{Standing constraints.} By \cref{lem:initial}, applied with $r=1<2=s$,
\begin{equation}\label{eq:kone-nonvanish}
  A_u(O)=0, \qquad A_v(O)\neq0, \qquad B(O)=0, \qquad B_u(O)\neq0 .
\end{equation}
Moreover $A\in\VA=(u^{2},v)$ gives $A(u,0)\in(u^{2})$, and $B\in\VB=(u,v)$ gives $B(O)=0$.

\emph{Common skeleton.} In both cases we expand \cref{eq:master} in powers of the variable transverse to $\ell$, extract the top coefficient, integrate the resulting logarithmic identity, and compare degrees. The leading forms bound the expansion: from $\hat A=\ell^4$ and $\hat B=\ell^3$, no monomial of top degree involves the transverse variable, so if $A=\sum_j\alpha_jt^j$ and $B=\sum_j\beta_jt^j$ in the transverse variable $t$, then
\begin{equation}\label{eq:kone-ode}
  \deg\alpha_j\le3-j\ (j\ge1), \quad \deg\alpha_0=4, \qquad
  \deg\beta_j\le2-j\ (j\ge1), \quad \deg\beta_0=3 ,
\end{equation}
whence $N:=\deg_tA\le3$ and $M:=\deg_tB\le2$.

\emph{Case $\ell=u$.} Here $\Lambda_u=1$, $\Lambda_v=0$, so $\jb{B}{\Lambda}=-B_v$ and $\jb{\Lambda}{A}=A_v$, and \cref{eq:master} reads
\begin{equation}\label{eq:kone-U}
  U\bigl(B_uA_v-B_vA_u\bigr)-2AB_v+BA_v=\kappa .
\end{equation}
Take $t=v$. By \cref{eq:kone-nonvanish}, $A_v(O)=\alpha_1(0)\neq0$, so $N\ge1$. If $M=0$ then $B_v=0$ and \cref{eq:kone-U} becomes $A_v(U\beta_0)'=\kappa$ with $'=d/du$, so both factors are constants; then $U\beta_0$ is linear with zero constant term, so $\beta_0$ is a constant, and $\beta_0(0)=B(O)=0$ gives $\kappa=0$. So $M\ge1$.

Set $f=\beta_M$, $g=\alpha_N$. Every term of \cref{eq:kone-U} has $v$-degree at most $M+N-1\ge1$, so the coefficient of $v^{M+N-1}$ vanishes:
\[
  U\bigl(Nf'g-Mfg'\bigr)+(N-2M)fg=0,
  \qquad\text{hence}\qquad
  f^{\,N}=C\,U^{\,2M-N}g^{\,M},\quad C\in\C^{\times},
\]
and $N\deg f-M\deg g=2M-N$. With $1\le M\le2$, $1\le N\le3$ and the bounds \cref{eq:kone-ode}, the only solutions are $(M,N)=(1,1)$ with $(\deg f,\deg g)=(1,0)$ and $(M,N)=(1,2)$ with $(\deg f,\deg g)=(0,0)$.

If $(M,N)=(1,1)$ then $\alpha_1=a\in\C^{\times}$ and $\beta_1=bU$ with $b\in\C^{\times}$, so $A=\alpha_0+av$ and $B=\beta_0+bUv$. Substituting into \cref{eq:kone-U}, the terms in $v$ cancel identically and there remains $a(U\beta_0)'-b(U^{2}\alpha_0)'=\kappa$. Integrating gives $aU\beta_0-bU^{2}\alpha_0=\kappa U+c_1$, and $c_1=0$ by evaluation at $U=0$; dividing by $U$ gives $a\beta_0-bU\alpha_0=\kappa$. At $O$ one has $U=1$, $\beta_0(0)=B(O)=0$ and $\alpha_0(0)=0$ since $u^2\mid\alpha_0$, so $\kappa=0$, a contradiction.

If $(M,N)=(1,2)$ then $\beta_1=b$ and $\alpha_2=g$ are nonzero constants. The coefficient of $v^{2}$ in \cref{eq:kone-U} vanishes identically and that of $v$ gives $2g(U\beta_0)'-b(U\alpha_1)'=0$; integrating and evaluating at $U=0$ gives $2g\beta_0=b\alpha_1$, impossible since $\deg\beta_0=3$ and $\deg\alpha_1\le2$.

\emph{Case $\ell=v$.} Here $\Lambda_u=0$, $\Lambda_v=1$, so $\jb{B}{\Lambda}=B_u$ and $\jb{\Lambda}{A}=-A_u$, and \cref{eq:master} reads
\begin{equation}\label{eq:kone-V}
  V\bigl(B_uA_v-B_vA_u\bigr)+2AB_u-BA_u=\kappa .
\end{equation}
Take $t=u$. By \cref{eq:kone-nonvanish}, $B_u(O)=\beta_1(0)\neq0$, so $M\ge1$. If $N=0$ then $A_u=0$ and \cref{eq:kone-V} becomes $B_u(V\alpha_0'+2\alpha_0)=\kappa$ with $'=d/dv$, so both factors are constants; writing $\alpha_0=\sum_kc_kV^{k}$ gives $V\alpha_0'+2\alpha_0=\sum_k(k+2)c_kV^{k}$, constant only if $\deg\alpha_0=0$, against \cref{eq:kone-ode}. So $N\ge1$.

With $f=\beta_M$ and $g=\alpha_N$, the coefficient of $u^{M+N-1}$ in \cref{eq:kone-V} vanishes, giving
\[
  V\bigl(Mfg'-Nf'g\bigr)+(2M-N)fg=0,
  \qquad\text{hence}\qquad
  g^{\,M}=C\,V^{\,N-2M}f^{\,N},
\]
and $M\deg g-N\deg f=N-2M$. Since $A\in\VA$, the coefficient of $u$ in $A$ is divisible by $v$, so $\alpha_1(0)=0$ and $\deg\alpha_1\ge1$ whenever $\alpha_1\neq0$. Enumerating with \cref{eq:kone-ode}: $(M,N)=(1,1)$ forces $\deg g=\deg\alpha_1=0$, contradicting this; $(M,N)=(1,3)$, $(2,1)$, $(2,2)$ and $(2,3)$ are excluded by the degree relation; and $(M,N)=(1,2)$ forces $\deg f=\deg g=0$.

In that case $\beta_1=b$ and $\alpha_2=g$ are nonzero constants, so $A=\alpha_0+\alpha_1u+gu^{2}$ and $B=\beta_0+bu$. The coefficient of $u^{2}$ in \cref{eq:kone-V} vanishes identically, and that of $u$ gives $b(V\alpha_1)'=2g(V\beta_0)'$; integrating and evaluating at $V=0$ gives $b\alpha_1=2g\beta_0$, impossible since $\deg\beta_0=3$ and $\deg\alpha_1\le2$.

Both normalised cases are impossible, so the stratum $k=1$ is empty.
\end{proof}

\begin{rem}\label{rem:substrata}
\cref{prop:konestratum} concerns the exact-degree stratum, which is where $[F]$ lives. The bounded scheme $\Yparo[1,2,(4,3,1)]$ also contains the strata $\Yeq[1,2,(d_1',d_2',1)]$ with $d_1'\le4$, $d_2'\le3$; of these, $(d_1',d_2')=(1,1)$ is empty by \cref{prop:oneone}, and the remaining ten are not settled here. The two contradictions in the proof above both read ``$\deg\beta_0=3$ while $\deg\alpha_1\le2$'', so they use the exact degrees essentially and do not transfer.
\end{rem}

\subsection{Summary}  \label{subsec:summary}

For the fourth row the family is the set of rational points of the explicit scheme $\Yparo[\br,\bd]$ of \cref{sec:higher}; its leading stratum is constrained by \cref{thm:leading}, in dimension three by the system \cref{eq:diophantine}, and its initial stratum by \cref{lem:initial}. The map \cref{eq:map} is one of its points, and whether it is the only one, up to $\Gaut$, is \cref{rem:orbit}. Three facts frame that question at the degrees of \cref{eq:map}: the leading configuration is forced to be that of \cref{eq:map} (\cref{prop:konestratum}), the leading data carries no continuous modulus (\cref{ex:enumerate}), and a family with varying leading data would require $e\ge3$. None of these excludes a family with constant leading data, which is what \cref{prop:determinantal} would have to decide.

\begin{table}[h]
\[
\begin{array}{l|l}
 \text{type of } \bw \ (\bw\neq0,\ \gcd=1) & \text{status of } \Kfam(n,\bw) \\\hline
 \text{elliptic, any } n & \emptyset \ (\text{\cref{thm:firstcut}}) \\
 n\le2, \text{ any signature} & \emptyset \ (\text{\cref{thm:firstcut}}) \\
 \text{parabolic, nonzero part elliptic, } z \text{ zeros} & \neq\emptyset \iff \text{JC fails in dim } z;\\
 					&	 \emptyset \text{ for } z=1 \\
 \bw=(1,-r_2,\dots,-r_n),\ r_i\ge0,\ m=\sum r_i\ge2 & \text{the rational points of } \Yparo[\br,\bd] \\
 \bw=(1,-1,0,\dots,0) & \text{JC in dimension } n-1, \\
 				&	\text{ no contracted locus} \\
 \text{hyperbolic, one positive weight } \ge2 & \text{open (singular quotient)} \\
 \text{hyperbolic, } n_+\ge2 \ (\text{so } n\ge4) & \text{open (weight-1 piece not cyclic)}
\end{array}
\]
\caption{The classification of \cref{sec:classify}.}
\label{tab:classify}
\end{table}

Finally, the parameter space itself is of the opposite signature to the map it parametrises. By \cref{prop:wps} the triples $(A,B,\Lambda)$ modulo the natural $\Gm$ form a weighted projective space $\WP(\bomega)$ with all weights positive, so the arithmetic of a hyperbolic counterexample is governed by an elliptic moduli space, of exactly the type studied in \cite{sparsity}. This is the sharpest form of the comparison announced in the introduction, and it is the reason the counting function of \cref{prop:wps} is well posed.


\section{Concluding remarks}\label{sec:9}

The results above leave a picture with some proved edges and some open middle. We close by recording both: why the mechanism of \cref{eq:map} needs dimension three, where the classification of \cref{sec:classify} actually stops, what a counting theory over $\Q$ would require, and how the two papers' settings fit together.

\subsection*{Why dimension three}

For a weight type \cref{eq:goodtype} the Keller condition is, by \cref{prop:ndescent}, the single divisorial identity
\begin{equation}\label{eq:concl-master}
  \Jac_{u}(P_2,\dots,P_n)=\kappa\,\Lambda^{\,m-1}, \qquad m=\sum_{i\ge2}r_i:
\end{equation}
the Jacobian of the quotient map vanishes along the contracted locus $\{\Lambda=0\}$, to the order prescribed by the weights, and nowhere else. In dimension two this is an identity between polynomials in one variable, and there $\Jac_u$ is a derivative: with $\overline G=B\Lambda^{r}$ one has $\Jac_u(P)=\overline G'$, so \cref{eq:concl-master} reads $\overline G'=\kappa\Lambda^{r-1}$. Comparing degrees, $\deg B+r\deg\Lambda-1=(r-1)\deg\Lambda$, that is $\deg B+\deg\Lambda=1$, which is impossible since $B$ lies in the maximal ideal and $\deg\Lambda\ge1$. This is the argument of \cref{thm:plane} in the coordinates of \cref{sec:higher}, and it closes the case because the degree of a derivative drops by exactly one: a curve admits an exact ramification count, and the count leaves no room.

In dimension three and above no such accounting holds. The quotient is a surface, $\Jac_{u,v}$ is not a derivative, and the discrepancy that the degree count would have to absorb is absorbed instead by the geometry: the quotient map is not proper, and over a general point of the discriminant the missing sheets escape to infinity rather than coming together (\cref{prop:disc}). The mechanism behind \cref{eq:map} needs somewhere for ramification to go, and a curve has nowhere. Consistently with this, \cref{rem:vasyunin} and \cref{prop:parabolic-hyp} show that it is the contracted locus, and not the hyperbolic signature, that makes a counterexample possible: when $\Lambda$ is constant, \cref{eq:concl-master} degenerates to the Keller condition for $\overline G$ and the graded problem in dimension $n$ becomes the ordinary Jacobian Conjecture in dimension $n-1$.

This also suggests where to look for structure we have not examined: the two GIT manifestations of the hyperbolic action, of which we have used only $\{x\neq0\}$, and whose other chamber appears related to the weighted projective line $\bP(1,2)$; we have not pursued this.

\subsection*{Where the classification stops}

\cref{tab:classify} records what the signature decides and what it does not. The elliptic case is empty in every dimension (\cref{thm:rigid}), dimension two is empty for every signature (\cref{thm:plane}), and the parabolic case is not a third phenomenon but a degeneration of the other two: with elliptic nonzero part it is the Jacobian Conjecture in the number of vanishing weights (\cref{thm:parabolic-elliptic}), and with hyperbolic nonzero part of the type \cref{eq:goodtype} it is the theory of \cref{sec:higher} verbatim (\cref{prop:parabolic-hyp}). What survives is a genuinely finer invariant than the signature: within the hyperbolic case the reduction to \cref{eq:concl-master} requires that the weight-one graded piece be free of rank one over the invariants, which holds exactly when there is a single positive weight and it equals $1$. Both failures are real. With one positive weight $a\ge2$ the invariant ring is not polynomial --- already $\bw=(2,-1,-1)$ gives a quadric cone --- and $\Jac_u$ loses its elementary meaning; with $n_+\ge2$, first possible in dimension four, the weight-one piece is generated by two elements and the factor $\Lambda$ has no analogue.

Four questions are left, in increasing order of how much we expect them to require.

\begin{enumerate}
\item Is $\Yparo[1,2,(4,3,1)]$ a single $\Gaut$-orbit (\cref{rem:orbit})? At these degrees \cref{prop:konestratum} forces the leading configuration of \cref{eq:map} and \cref{ex:enumerate} shows the leading data carries no continuous modulus, so what remains is the linear algebra of \cref{prop:determinantal}: a rank condition on a $13\times28$ matrix over an eleven-dimensional base. This is a computation, not a theorem, and we regard it as the first thing to do.
\item Is $\Kfam(3,(1,-1,-1))$ empty? By \cref{prop:oneone} the smallest undecided degrees are $\bd=(2,1,d_3)$ and $\bd=(1,2,d_3)$, and the same three-step search applies.
\item Does \cref{prop:konestratum} hold uniformly? Its proof --- normalise $\ell$, expand in the transverse variable, extract the top coefficient, integrate the resulting logarithmic identity, compare degrees --- appears not to use the particular degrees except at the last step. A statement that the stratum $k=1$ is empty for every $(r,s,\bd)$ would restore the content that \cref{cor:empty} does not have.
\item The two omitted hyperbolic shapes: positive weight $\ge2$, where the quotient is singular, and $n_+\ge2$, where it is smooth but the description \cref{eq:ntuple} fails. These need different methods, and the second is the first place where dimension four is not a formality.
\end{enumerate}

\subsection*{Arithmetic}

Restricting \cref{eq:map} to $\{x=0\}$ gives $(y,z)\mapsto(z+4y^{2},\,y,\,0)$, a Jonqui\`eres automorphism of the plane onto $\{c=0\}$, so every rational point of $\{c=0\}$ lifts. In the counting function
\[
  Z(X)=\#\{(a,b,c)\in\Q^{3}: H\le X,\ F^{-1}(a,b,c)\cap\Q^{3}\neq\emptyset\}
\]
this stratum should dominate, much as coordinate strata dominate the counting problems of \cite{sparsity}, while off the stratum the count is governed by the splitting of \cref{eq:cubic}; one expects the exponent there to come from a linear program, as in the positive-weight setting, though we have proved nothing in this direction. Part of the difficulty is foundational: the weighted heights of \cite{bgsh,sparsity} require positive weights, the stacky height frameworks require properness, and \cref{eq:map} lives where both hypotheses fail. A height theory for hyperbolic quotients --- a quotient height coupled with an orbit term --- would let the problem be posed precisely, and seems to us the natural first step.

The contrast with the parameter space is worth stating plainly, because it is the one place where the two settings meet inside a single object. By \cref{prop:wps} the triples $(A,B,\Lambda)$ modulo the natural $\Gm$ form a weighted projective space with all weights positive. The counting function $\Ncount(X)$ on it is therefore well posed, by the very theory of \cite{bgsh,sparsity} that $Z(X)$ falls outside: a hyperbolic counterexample is parametrised by an elliptic moduli space. Whether $\Ncount(X)$ measures anything beyond the growth of a single orbit is \cref{rem:orbit} again.

\subsection*{One classification}

The settings of \cite{sparsity} and of this note are two manifestations of one classification. For a $\Gm$-action the governing invariant is the position of $0$ relative to the weights. All weights of one sign, the elliptic case, gives proper, finite, ramified quotient maps, where the obstruction to lifting a rational point is a Kummer condition \cite{sparsity}; $0$ strictly between the weights, the hyperbolic case, gives non-proper quotients with contracted loci, where liftability is the splitting of a resolvent, in this example with nonabelian Galois group and thin image; and a zero weight interpolates, contributing a cylinder factor which, as \cref{thm:parabolic-elliptic} shows, carries the whole difficulty. In dimensions two and three every algebraic $\Gm$-action on affine space is linearizable \cite{gutwirth,kkmlr}, so there the classification covers all actions.

The Jacobian problem is trivial in the elliptic case (\cref{thm:rigid}) and in dimension two for every signature (\cref{thm:plane}), and the two facts have different proofs but the same source: in both, properness or an exact degree count forecloses the loss of a sheet. What is striking is that the dichotomy is not clean on the arithmetic side. \cref{rem:kummer} exhibits a Kummer condition inside the hyperbolic example, on the line where the stabilizer is $\muu{2}$, and \cref{rem:aut} suggests another at the level of moduli; the abelian obstruction of the positive-weight theory returns wherever the stabilizer is nontrivial. This points to a stratification by stabilizer type, refining the trichotomy rather than being refined by it, and it is the shape we would expect a general theory to take: a theory of lifting rational points along equivariant maps, organized by signature and by stabilizer, of which \cite{sparsity} is the elliptic case and the analysis above a first hyperbolic example.

For a torus of higher rank the geometric side is classical --- the invariant becomes the position of the origin relative to the convex hull of the weights, and the manifestations form the GIT fan --- so the arithmetic questions can at least be asked in that generality. We hope to return to this.

\end{document}